\documentclass[11pt,A4]{amsart}

\usepackage{amsmath,amssymb,amsfonts,epsfig,mathrsfs,cite, hyperref}
\usepackage[T1]{fontenc}
\usepackage[utf8]{inputenc}
\usepackage{color}
\usepackage{array}
\usepackage{amsthm}
\usepackage{amstext}
\usepackage{graphicx}
\usepackage{setspace}

\usepackage[title]{appendix}

\usepackage{booktabs}

\usepackage{tcolorbox}

\usepackage{float}

\makeatletter
\@namedef{subjclassname@2020}{%
	\textup{2020} Mathematics Subject Classification}
\makeatother

\usepackage[margin=2.5cm]{geometry}
\usepackage{color}
\usepackage{enumitem}
\usepackage{amscd,psfrag}
\usepackage{yhmath}
\usepackage[mathscr]{eucal}
\usepackage{comment}

\allowdisplaybreaks[4]

\makeatletter
\pdfpageheight\paperheight
\pdfpagewidth\paperwidth

\usepackage{epstopdf}
\usepackage{indentfirst}	

\usepackage[normalem]{ulem}
\theoremstyle{plain}
\newtheorem{definition}{Definition}
\newtheorem{theorem}[definition]{Theorem}
\newtheorem*{theorem*}{Theorem}

\newtheorem*{remark*}{Remark}
\newtheorem*{sideremark*}{Side Remark}

\newtheorem*{claim*}{Claim}
\newtheorem*{lemma*}{Lemma}
\newtheorem*{q*}{Question}
\newtheorem{lemma}[definition]{Lemma}

\newtheorem*{corollary*}{Corollary}

\newtheorem{proposition}[definition]{Proposition}

\newcommand{\R}{\mathbb{R}}

\newcommand{\emb}{\hookrightarrow}

\newcommand{\dd}{{\rm d}}

\def\XXint#1#2#3{{\setbox0=\hbox{$#1{#2#3}{\int}$ }
		\vcenter{\hbox{$#2#3$ }}\kern-.6\wd0}}

\keywords{Compressible non-Newtonian fluids; power-law model; weak solutions; singular limit}

\subjclass[2020]{Primary: 35Q30, 76A05, 35D30, 35K55; Secondary: 35B40, 35B45, 76N10}

\date{\today}

\author{Siran Li}

\address{Siran Li: School of Mathematical Sciences $\&$ CMA-Shanghai, Shanghai Jiao Tong University, No.~6 Science Buildings,
800 Dongchuan Road, Minhang District, Shanghai, China (200240)}

\email{\texttt{siran.li@sjtu.edu.cn}}

\author{Jianing Yang}

\address{Jianing Yang: School of Mathematical Sciences, Shanghai Jiao Tong University, No.~6 Science Buildings,
800 Dongchuan Road, Minhang District, Shanghai, China (200240)}

\email{\texttt{jnyang22@sjtu.edu.cn}}

\author{Yuantu Zhu}

\address{Yuantu Zhu: School of Mathematical Sciences, Shanghai Jiao Tong University, No.~6 Science Buildings,
800 Dongchuan Road, Minhang District, Shanghai, China (200240)}

\email{\texttt{radonzhu@sjtu.edu.cn}}

\begin{document}
	\title{Existence of Weak Solutions to a Power-Law Model for Compressible Non-Newtonian Fluids on 1D Unbounded Domain}
	
	\begin{abstract}
This paper is concerned with the analysis of a one-dimensional power-law model for compressible fluid dynamics on $\mathbb{R}$, in which the shear stress takes the form $\mu |\partial_{x}u|^{p-2}\partial_{x}u$, where $\mu$ is the viscosity coefficient and $u$ is the velocity. We prove that, in the singular limit $p\rightarrow\infty$, the solutions converge to functions $(\rho,u)$ satisfying $|\partial_{x}u|\leq 1$, $\tau = \pi \partial_{x}u$, $\pi \geq 0$, and $\pi (1 - |\partial_{x}u|) = 0$ a.e. on $\R$. Moreover, we rigorously justify the existence of weak solutions to the  limiting equation. The convergence as $p \to \infty$ is obtained via domain truncation and compactness arguments, of which the key challenge is to show that the density remains bounded away from zero and infinity on any compact subset. This extends the recent result of Bresch, Burtea, and Szlenk [Nonlinearity 26 (2026), no. 5, Paper No. 055010.] from one-dimensional periodic domain to the whole real line.
	\end{abstract}
	\maketitle
	
	\section{Introduction}\label{sec: intro}
		Non-Newtonian fluids arise in a wide range of industrial and biological applications, including blood, molten plastics, printing inks, and food pastes; see, \emph{e.g.}, \cite{Chhabra2008,Chhabra2010}. In contrast to Newtonian fluids, such as air or water, for which the shear stress is linear in the velocity gradient, the complex fluids exhibit nonlinear relations between the stress and the rate of strain. 
	
	Among the extensive non-Newtonian fluid models, the \emph{power-law model} is of particular interest in theory and applications. The $p^{\text{th}}$-power-law model, where $p \in [1,\infty[$ is a given parameter, for one-dimensional compressible non-Newtonian fluids takes the following form:
	\begin{equation}\label{p-system}
		\begin{cases}
			\partial_t \rho_{p} + \partial_x (\rho_{p} u_{p}) = 0,\\
			\partial_t (\rho_{p} u_{p}) + \partial_x (\rho_{p} u_{p}^2) - \mu \partial_x\tau_p + a \partial_x \rho_{p}^\gamma = 0,
		\end{cases}
	\end{equation} 
	Here, $\rho_{p},u_{p}$ are the density and the velocity of the fluid, respectively; $a > 0$ is a constant related to pressure, and $\gamma > 1$ is the adiabatic exponent. The shear stress $\tau_{p}$ is given by
	\begin{equation}\label{ShearStress}
		\tau_p = |\partial_x u_p|^{p-2}\partial_x u_p,
	\end{equation}
	where $\mu > 0$ is the viscosity coefficient and $p$ is the power-law index. The exponent $p$ distinguishes between different rheological regimes: $p > 2$ characterises shear-thickening fluids (\emph{i.e.}, viscosity increases with increasing shear rate), $p < 2$ shear-thinning fluids (\emph{i.e.},  viscosity decreases with increasing shear rate), and $p = 2$ the Newtonian fluid.
	
	The mathematical theory of power-law fluids has been a crucial topic of research in the literature of fluid dynamics and partial differential equations (PDE). The analytic theory of incompressible non-Newtonian fluids has been investigated in the pioneering work of Ladyzhenskaya~\cite{Ladyzhenskaya1967} and many important subsequent developments~\cite{Bothe2007,Berselli2010,Guo2002,Malek1993,Malek2001,Moscariello2020,Wolf2007}. For compressible power-law non-Newtonian fluids, various results on global existence, large-time behaviour, and decay rates have also been established. Yuan--Si--Feng \cite{Yuan2019} proved the existence of global strong solutions to a class of compressible non-Newtonian fluids with small initial energy. Feireisl--Kwon--Novotny \cite{Feireisl2021} analysed the large-time behaviour of dissipative solutions to compressible non-Newtonian models. In the setting of one space dimension, Yuan--Liu--Qiao \cite{Yuan2012} established the global existence of strong solutions for isentropic compressible non-Newtonian fluids. More results on non-Newtonian fluids can be found in \cite{Fang2023,Lian2012,Muhammad2020,Zhao1995,Zhao2001} and the many references cited therein.
	
	The study of the asymptotic limit $p\rightarrow\infty$ in the power-law model~\eqref{p-system} $\&$ \eqref{ShearStress} is motivated by the extreme shear-thickening behaviour observed in various complex fluids, such as concentrated suspensions of china clay, titanium dioxide~\cite{Chhabra2008}. Formally, as $p\rightarrow\infty$, the viscosity becomes infinitely sensitive to the velocity gradient, and the stress $\tau_{p}$ tends to $0$ for $|\partial_{x}u|<1$ and infinite for $|\partial_{x}u|>1$. Therefore, a ``\emph{saturation constraint}'' must be imposed on the velocity gradient:
	\begin{equation}\label{cdt}
		\begin{cases}
			|\partial_{x}u|\leq 1 \quad \text{a.e.},\\ \pi \geq 0 \quad \text{with}\quad \pi (1 - |\partial_{x}u|) = 0
		\end{cases}
	\end{equation}
	where $\pi$ acts as a Lagrange multiplier. 
	
	In the recent nice work~\cite{Bresch2026}, Bresch--Burtea--Szlenk proved (among other multi-dimensional results) the global existence of weak solutions to~\eqref{p-system} on the 1-dimensional periodic domain $\mathbb{T}=\mathbb{R} / \mathbb{Z}$ and justified that, as $p \to \infty$, the weak solutions converge to those of the limiting system
	\begin{equation*}
		\begin{cases}
			\partial_{t}\rho +\partial_{x}(\rho u) = 0,\\
			\partial_{t}(\rho u) + \partial_{x}(\rho u^{2}) - \mu \partial_{x}\tau + a\partial_{x}\rho^{\gamma} = 0,
		\end{cases}
	\end{equation*}
	subject to the saturation constraint~\eqref{cdt}. 
	
	The main goal of this note is to establish the existence of weak solutions to the limiting system (formally, with $p=\infty$) on the whole real line $\mathbb{R}$; see \eqref{PDE limit} below. This is achieved by a truncation argument: we first analyse the system~\eqref{p-system} and \eqref{ShearStress} in a finite interval of length $\sim k$ subject to the homogeneous Dirichlet boundary condition, obtaining estimates for the weak solutions independently of the power-law index $p$, and then send $k \to \infty$ to construct a global weak solution on $\mathbb{R}$ by compactness and diagonalisation arguments. 
	
	Here and hereafter, for each $k = 1,2,3,\ldots$, denote the bounded interval $$\Omega_{k} = (-2k-2,2k+2).$$ 
    
    The main result of our paper is as follows.
	\begin{theorem}\label{thm: main}
		Let $(\rho_0, u_0):  \mathbb{R} \to  \mathbb{R}^2$ be the initial data satisfying the following conditions: given any compact $K \Subset \mathbb{R}$, there are constants $c_1(K) > 0$ depending only on $K$ and $c_2 > 0$ universal, such that
		\begin{equation*}
			\begin{cases}    
				0 < c_{1}(K)\leq \rho_{0}(x)\leq c_{2}< \infty \quad \text{for a.e. } x\in K,\\
				\|\partial_{x}u_{0}\|_{L_{x}^{\infty}(\mathbb{R})}< 1,\qquad \| u_{0}\|_{L_{x}^{\infty}(\mathbb{R})}\leq 1,\\
				E_{0}:= \int_{\mathbb{R}}\left(\frac{1}{2}\rho_{0}u_{0}^{2} + \frac{a}{\gamma - 1}\rho_{0}^{\gamma}\right)\mathrm{d}x < \infty.
			\end{cases}
		\end{equation*}
		
		For each $k=1,2,3,\ldots,$ let $(\rho_{k,0}, u_{k,0})$ be the truncated initial data and $(\rho_{p,k,0}, u_{p,k,0})$ be smooth approximations of $(\rho_{k,0}, u_{k,0})$ such that
        \begin{align*}
			&0 < c_1(\Omega_k) \leq \rho_{p,k,0}(x) \leq c_2 < \infty,\qquad \|\partial_x u_{p,k,0}\|_{L_x^\infty(\Omega_k)} \leq 1,\\
			&\rho_{p,k,0}\longrightarrow \rho_{k,0},\quad u_{p,k,0}\longrightarrow u_{k,0}\quad \text{strongly in} \quad L_x^{\infty}(\mathbb{R}).
		\end{align*}
		where $c_2 > 0$ is independent of $p$ and $k$ and  $c_1(\Omega_k)>0$ is dependent on $k$ but independent of $p$. Then, for $p$ sufficiently large, there exist weak solutions $(\rho_{p,k}, u_{p,k})$ to the truncated system
		\begin{equation}
			\label{PDE,pk}
			\begin{cases}
				\partial_{t}\rho_{p,k} + \partial_{x}(\rho_{p,k}u_{p,k}) = 0,\\
				\partial_{t}(\rho_{p,k}u_{p,k}) + \partial_{x}(\rho_{p,k}u_{p,k}^{2}) - \mu \partial_{x}(|\partial_{x}u_{p,k}|^{p-2}\partial_{x}u_{p,k}) + a\partial_{x}\rho_{p,k}^{\gamma} = 0 \quad \text{in } (0,T)\times\Omega_k,\\
				u_{p,k} = 0\qquad \text{ on } \partial\Omega_k.
			\end{cases}
		\end{equation}
        As $p\to \infty$, there exists a subsequence such that for every $1\leq r<\infty$, it holds that
		\begin{align*}
			u_{p,k} &\rightharpoonup u_k \quad \text{weakly in } L^2(0,T; H^1(\Omega_k))\cap L^r(0,T;W^{1,r}(\Omega_k)),\\
			u_{p,k} &\longrightarrow u_k \quad \text{strongly in } L^2((0,T)\times\Omega_k),\\
			\tau_{p,k} := |\partial_x u_{p,k}|^{p-2}\partial_x u_{p,k} &\rightharpoonup \tau_k \quad \text{weakly in } L^2((0,T)\times\Omega_k),\\
			\rho_{p,k} &\longrightarrow \rho_k \quad \text{strongly in } C([0,T]; L^r(\Omega_k)).
		\end{align*}
        Moreover, $(\rho_k, u_k)$ obtained above satisfies the system
        \begin{equation}\label{PDE,k}
			\left\{
			\begin{array}{ll}
				\partial_{t}\rho_k +\partial_{x}(\rho_k u_k) = 0,\\
				\partial_{t}(\rho_k u_k) + \partial_{x}(\rho_k u_k^{2}) - \mu \partial_{x}\tau_k + a\partial_{x}\rho_k^{\gamma} = 0 \qquad \text{in } \mathcal{D}'((0,T)\times \Omega_k);\\
				\tau_k = \pi_k \partial_{x}u_k,\quad \pi_k \geq 0,\quad \pi_k (1 - |\partial_{x}u_k|) = 0, \quad \text{and}\quad 
				|\partial_{x}u_k|\leq 1 \qquad \text{a.e. in } (0,T)\times \Omega_k,\\
                u_k = 0 \qquad \text{on } \partial\,\Omega_k.
			\end{array}
			\right.
		\end{equation}
		Moreover, there exist a subsequence with respect to $k$ (unrelabelled), a pair $(\rho,u):  \mathbb{R}\to  \mathbb{R}^2$, a Radon measure $\tau$ on $(0,T)\times \mathbb{R}$, and a measurable function $\pi \geq 0$ such that as $k\to\infty$,
		\begin{equation*}
			\begin{cases}
				u_k &\rightharpoonup u \quad \text{weakly in } L^2(0,T; H^1_{\mathrm{loc}}(\mathbb{R})),\\
				u_k &\rightarrow u \quad \text{strongly in }  L^2(0,T;L^2_{\mathrm{loc}}(\mathbb{R})),\\
				\tau_k &\stackrel{*}{\rightharpoonup} \tau \quad \text{as Radon measures on } (0,T)\times \mathbb{R}, \\
				\rho_k &\rightarrow \rho \quad \text{strongly in } C([0,T];L^r_{\mathrm{loc}}(\mathbb{R})), \quad r\in[1,\infty[.
			\end{cases}
		\end{equation*}
		The limit $(\rho, u)$ is a weak solution to the system: 
		\begin{equation}\label{PDE limit}
			\left\{
			\begin{array}{ll}
				\partial_{t}\rho +\partial_{x}(\rho u) = 0,\\
				\partial_{t}(\rho u) + \partial_{x}(\rho u^{2}) - \mu \partial_{x}\tau + a\partial_{x}\rho^{\gamma} = 0 \qquad \text{in } \mathcal{D}'((0,T)\times \mathbb{R});\\
				\tau = \pi \partial_{x}u,\quad \pi \geq 0,\quad \pi (1 - |\partial_{x}u|) = 0, \quad \text{and}\quad 
				|\partial_{x}u|\leq 1 \qquad \text{a.e. in } (0,T)\times \mathbb{R}.
			\end{array}
			\right.
		\end{equation}
		Furthermore, for each compact set $K \Subset \mathbb{R}$, we have
		\begin{equation}
			0< C_{1}(K)\leq \rho (t,x)\leq C_{2}(K)<\infty\qquad \text{for a.e. } (t,x)\in (0,T)\times K,
		\end{equation}
		with $C_{1}(K)$ and $C_{2}(K)$ depending only on $T$, $\gamma$, $a$, $\mu$, $E_{0}$, and $K$.
	\end{theorem}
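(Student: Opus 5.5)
The argument runs in two stages: first $p\to\infty$ at fixed $k$, following the periodic argument of Bresch--Burtea--Szlenk but with Dirichlet boundary conditions on $\Omega_k$, and then $k\to\infty$ by local compactness and a diagonal extraction.

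\textbf{Stage 1 ($p\to\infty$, $k$ fixed).} For $p$ large, existence of weak (indeed regular enough) solutions to \eqref{PDE,pk} comes from the standard Galerkin/Faedo scheme for monotone $p$-Laplacian stress, combined with the one-dimensional Lagrangian structure.

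The first a priori estimate is the energy identity
\begin{equation*}
\sup_t\int_{\Omega_k}\Big(\tfrac12\rho_{p,k}u_{p,k}^2+\tfrac{a}{\gamma-1}\rho_{p,k}^\gamma\Big)\,\mathrm{d}x+\mu\int_0^T\!\!\int_{\Omega_k}|\partial_xu_{p,k}|^p\,\mathrm{d}x\,\mathrm{d}t\le E_{0}+o(1).
\end{equation*}
Since $\|\partial_x u_{p,k,0}\|_{L^\infty}\le1$, this gives $\|\partial_xu_{p,k}\|_{L^p}\le C^{1/p}$. Hölder's inequality on the bounded set $(0,T)\times\Omega_k$ then yields uniform bounds in $L^r(0,T;W^{1,r})$ for every $r<\infty$. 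The same bound gives $\|\tau_{p,k}\|_{L^{p'}}^{p'}=\|\partial_xu_{p,k}\|_{L^p}^p\le C$, so $\tau_{p,k}$ is bounded in $L^{p'}$ uniformly.

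The key step, which I expect to be the main obstacle, is two-sided bounds on the density independent of $p$. I would argue as follows.
\begin{itemize}
\item Write the momentum equation as $\partial_t(\rho u)+\partial_x(\rho u^2)=\partial_x(\mu\tau-a\rho^\gamma)$.
\item Integrate in $x$ to get a potential $\phi$ with $\partial_x\phi=\rho u$. This gives $D_t\phi=\mu\tau-a\rho^\gamma$ along particle paths, up to boundary terms controlled by $u=0$ on $\partial\Omega_k$.
\item Observe that $\tau$ and $\partial_x u = -D_t\log\rho$ are related monotonically through $\tau=|\partial_xu|^{p-2}\partial_xu$.
\item Integrating $D_t\log\rho$ in $t$ then gives an ODE-type comparison: define $F_p(s)=\int_0^s|r|^{p-2}r\,\mathrm{d}r$ appropriately, so that the combination $\mu\,\mathcal F(\partial_x u)+\phi$ is controlled.
\item The energy bounds $\phi$ in $L^\infty$, via $|\phi|\le\|\sqrt\rho\|_{L^2}\|\sqrt\rho u\|_{L^2}$ on the bounded interval.
\item Hence, on $\Omega_k$, $\rho_{p,k}$ stays in $[C_1(k),C_2]$. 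For the upper bound, the pressure term $-a\rho^\gamma$ gives a sign which prevents blow-up, independent of $k$.
\end{itemize}
If this Lagrangian approach fails, I would instead mimic the periodic proof via the effective-flux/BD-type argument.

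With density bounds in hand, the remaining compactness at fixed $k$ is standard. Aubin--Lions applied to $\rho u$ and $\rho$, using the transport equation and the bound $\partial_xu\in L^r$, gives $\rho_{p,k}\to\rho_k$ in $C([0,T];L^r)$ and $u_{p,k}\to u_k$ in $L^2$. Weak lower semicontinuity gives $\|\partial_xu_k\|_{L^r}\le\liminf C^{1/p}|\Omega_T|^{1/r}$, and letting $r\to\infty$ yields $|\partial_xu_k|\le1$.

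To identify the limit stress, I would use monotonicity (Minty's trick) and the energy inequality to show that $\limsup\int\tau_{p,k}\partial_xu_{p,k}\le\int\tau_k\partial_xu_k$, and that for every $|v|\le1$ one has $\int(\tau_k)(\partial_xu_k-v)\ge0$. This gives $\tau_k\in\partial I_{[-1,1]}(\partial_xu_k)$, that is, $\tau_k=\pi_k\partial_xu_k$ with $\pi_k\ge0$ and $\pi_k(1-|\partial_xu_k|)=0$.

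\textbf{Stage 2 ($k\to\infty$).} All bounds above are uniform in $k$ on each fixed compact set $K$, because the density bounds are local and $E_0$ is $k$-independent. The one exception is $\tau_k$: it is only bounded in $L^1_{\mathrm{loc}}$, which follows from $\int\pi_k=\int\tau_k\partial_xu_k\le E_0$. This is why the limit $\tau$ is only a Radon measure.

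Take an exhaustion by compacts $K_j$ and extract a diagonal subsequence, which gives all the stated convergences locally. Passing to the limit in the nonlinear terms $\rho u$, $\rho u^2$ and $\rho^\gamma$ uses the strong convergences. The constraint $|\partial_xu|\le1$ is again preserved under weak convergence.

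The structure $\tau=\pi\partial_xu$ is recovered from the local energy/Minty argument on test functions supported in $K$. The same argument shows that $\tau$ is absolutely continuous, equal to $\pi\partial_xu$ with $\pi\ge0$ measurable. Finally, the local density bounds pass to the a.e. limit, giving $C_1(K)\le\rho\le C_2(K)$.
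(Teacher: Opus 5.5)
\textbf{The main gap is the $p$-independent two-sided density bound.} This is the heart of the theorem, and your sketch does not establish it.

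With $\partial_x\phi=\rho u$ you have $D_t\phi=\mu H_p(\partial_xu)-a\rho^\gamma+c(t)$, where $H_p(s)=|s|^{p-2}s$. The combination whose material derivative closes is $\mu\log\rho+\phi$:
\[
D_t(\mu\log\rho+\phi)=\mu\bigl(H_p(\partial_xu)-\partial_xu\bigr)-a\rho^\gamma+c(t).
\]
For $p=2$ the bracket vanishes and the sign of the pressure gives the upper bound; this is the mechanism you invoke. For $p\neq 2$ it fails:
\begin{itemize}
\item Where $\partial_xu\le 0$ the bracket is at most $1$, so that region is harmless.
\item Where $\partial_xu>0$ the right-hand side equals $\sigma_{p,k}-\mu\partial_xu+c(t)$, with $\sigma_{p,k}:=\mu H_p(\partial_xu_{p,k})-a\rho_{p,k}^\gamma$. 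The energy gives no pointwise control of this along trajectories.
\item Concretely, a short burst of large positive $\partial_xu$ inflates $\int_0^tH_p(\partial_xu)\,\mathrm{d}s$ while contributing almost nothing to $\int_0^t\partial_xu\,\mathrm{d}s=\log(\rho_0/\rho(t))$.
\item No function $\mathcal F(\partial_xu)$ repairs this, since $H_p(\partial_xu)$ is not a material derivative.
\item The lower bound fails for the same reason, because it needs $\partial_xu$ bounded above along trajectories.
\end{itemize}

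The missing idea is the maximum principle for the effective flux. The quantity $\sigma_{p,k}$ solves
\[
\partial_t\sigma+u\partial_x\sigma-\mu H_p'(\partial_xu)\,\partial_x\bigl(\rho^{-1}\partial_x\sigma\bigr)=-\gamma\sigma\partial_xu+\mu(1+\gamma-p)|\partial_xu|^p ,
\]
whose right-hand side is non-positive at a positive maximum once $p\ge 1+\gamma$. Boundary maxima are excluded: $u=0$ on $\partial\Omega_k$ forces $\dot u=0$ there, hence $\partial_x\sigma=\rho\dot u=0$, and Hopf's lemma applies. The initial bound $\sigma_{p,k}(0)\le\mu$ is where $\|\partial_xu_{p,k,0}\|_{L^\infty}\le 1$ is used. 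Together these give $\sigma_{p,k}\le\mu$, which yields:
\begin{itemize}
\item $\partial_xu_{p,k}\le(1+\tfrac a\mu\rho_{p,k}^\gamma)^{1/(p-1)}$, hence the lower density bound along trajectories;
\item $D_t(\mu\log\rho+\phi)\le\mu+c(t)$, hence the upper bound via the Basov--Shelukhin potential.
\end{itemize}
Your ``effective-flux'' fallback points the right way only in this form.

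\textbf{Two further steps fail as written.}

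First, you never derive the Hoff-type bound $\int_0^T\!\int_{\Omega_k}\rho_{p,k}|\dot u_{p,k}|^2\le C$. Nor do you derive its consequence $\tau_{p,k}\in L^2(0,T;L^\infty(\Omega_k))$ uniformly in $p$, which comes from $\sigma_{p,k}(t,x)=\int_{x(t)}^x\rho\dot u\,\mathrm{d}z-a\rho^\gamma(t,x(t))$ at a zero $x(t)$ of $\partial_xu_{p,k}(t,\cdot)$. Your only stress bound is $\|\tau_{p,k}\|_{L^{p'}}^{p'}\le C$ with $p'\to 1$, which gives no equi-integrability. So at fixed $k$ the limit $\tau_k$ is a priori only a measure. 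Consequences:
\begin{itemize}
\item The weak $L^2$ convergence asserted in the statement is not obtained.
\item The pairing $\int\tau_k\partial_xu_k$ with $\partial_xu_k\in L^\infty$ is undefined. Your Minty step needs it through the energy equality of the limit system.
\end{itemize}

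Second, Aubin--Lions cannot give $\rho_{p,k}\to\rho_k$ strongly, since $\rho$ has no spatial regularity. It yields only $C([0,T];H^{-1}(\Omega_k))$, while passing to the limit in $a\rho^\gamma$ needs strong convergence. The paper obtains it as follows:
\begin{itemize}
\item test the $p$-momentum equation with $u_{p,k}$ and with $u_k$;
\item use the monotonicity of $H_p$, the $\sqrt\rho\,\dot u$ bound, and the strong convergence of $u_{p,k}$;
\item apply Gr\"onwall to $X_p(t)=\int_{\Omega_k}\bigl(\rho_{p,k}^\gamma-\rho_k^\gamma-\gamma\rho_k^{\gamma-1}(\rho_{p,k}-\rho_k)\bigr)\,\mathrm{d}x$, using $|\partial_xu_k|\le 1$;
\item upgrade to $C([0,T];L^r)$ via equicontinuity of $t\mapsto\int\rho_{p,k}^r$.
\end{itemize}

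Once these ingredients are in place, your $L^r$-norm argument for $|\partial_xu_k|\le 1$ and your Minty identification are valid alternatives to the paper's Chebyshev and energy-comparison steps.
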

	
	Compared with the periodic setting in Bresch--Burtea--Szlenk~\cite{Bresch2026}, the unbounded domain in our case introduces additional difficulties: the limits $p\to\infty$ and $k\to\infty$ do not commute, and the Dirichlet boundary conditions require new estimates that are uniform in both $p$ and the truncation parameter $k$. These difficulties are overcome in Sections 2--4 through a boundary maximum principle, a modified Basov--Shelukhin potential estimate~\cite{Basov1999}, and a diagonalised compactness argument.
	
	The rest of the paper is organised as follows. In Section~\ref{sec: prelim}, we collect the notation used throughout and recall the Aubin--Lions--Simon compactness lemma. In Section~\ref{sec: unif est for truncated}, we establish $p$-independent estimates for the truncated system~\eqref{PDE,pk}  on $\Omega_{k}$. In Section~\ref{sec: pass to lim}, we pass to the limit $p\to\infty$ for fixed $k$ by compactness arguments. In Section~\ref{sec: proof of thm main}, we complete the proof of Theorem~\ref{thm: main} by taking $k\to\infty$. Finally, in Appendix~\ref{sec: appendix}, we present detailed computations of the \emph{a priori} estimates adapted from \cite{Bresch2026} to the setting of Dirichlet boundary value problem.

	\section{Preliminaries}\label{sec: prelim}
	
	We collect here the notation used throughout the paper.
	\subsection{Notation}
	For $\Omega \subset \mathbb{R}$ and $1\leq r\leq \infty$, $L_{x}^{r}(\Omega)$ denotes the usual Lebesgue space with norm $\| \cdot \|_{L_{x}^{r}(\Omega)}$. For $1\leq r,q\leq \infty$, we write
	\begin{equation*}
		\|f\|_{L^q(0,T;L^r(\Omega))}
		= \left(\int_0^T \|f(t,\cdot)\|_{L_x^r(\Omega)}^q\,\mathrm{d}t\right)^{1/q},
	\end{equation*}
	with the usual modification when $q=\infty$.
	The local Lebesgue spaces $L_{\mathrm{loc}}^{r}(\Omega)$ and local Sobolev spaces $H_{\mathrm{loc}}^{1}(\Omega)$ are defined in the standard way:
	\begin{equation*}
		L^r_{\mathrm{loc}}(U)=\left\{f: U\rightarrow \mathbb R \,\Big|\, \forall K\Subset U,\ \|f\|_{L^r(K)}<\infty\right\},
	\end{equation*}
	 $H^1_{\mathrm{loc}}(U)$ is defined analogously.

	We shall also use the space $C([0,T];L_{w}^{r}(\Omega))$ of weakly continuous functions from $[0,T]$ into $L^{r}(\Omega)$, i.e., $t\mapsto \langle f(t),\phi \rangle$ is continuous for every $\phi \in L^{r'}(\Omega)$, where $r'$ is the conjugate index of $r$.

	For a velocity field $u$, the material derivative is denoted as
	\begin{equation*}
		\dot{f}:= \frac{\mathrm{D}}{\mathrm{D}t} f = \partial_t f + u\partial_x f.
	\end{equation*}
	We write $\mathbb{I}_{X}$ for the indicator function of a set $X\subset (0,T)\times \mathbb{R}$. Also write $C = C(a_{1},\ldots,a_{n})$ for positive constants depending only on the parameters $a_{1},\ldots,a_{n}$, which may change from line to line.
	
	The following lemma (see \cite[Theorem 5]{Simon1987}) will be used when passing to the weak limits of approximate solutions.
	
	\begin{theorem}[Aubin--Lions--Simon]\label{thm:ALS}
		Let $X$, $Y$, and $Z$ be Banach spaces. Assume that $X$ embeds compactly into $Y$, and $Y$ embeds continuously into $Z$. Let $1\leq p\leq \infty$. Assume that for a bounded family of functions $\mathcal{F}\subset  L^{p}(0,T;X)$, one has $\lim_{h \to 0^+}\| f(t + h) - f(t)\|_{L^{p}(0,T-h;Z)}= 0$ uniformly in $f \in \mathcal{F}$. Then $\mathcal{F}$ is relatively compact in $L^{p}(0,T;Y)$ and in $C(0,T;Y)$ if $p = \infty$.
	\end{theorem}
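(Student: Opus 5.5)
The statement immediately above is the Aubin--Lions--Simon compactness lemma (Theorem~\ref{thm:ALS}), quoted from \cite{Simon1987}. I would prove it by the classical route: reduce relative compactness in $L^p(0,T;Y)$ to compactness of time averages, plus the uniform control of time translations given in the hypothesis.

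\emph{Step 1: an Ehrling-type inequality.} Since $X \emb Y$ compactly and $Y \emb Z$ continuously, I would first establish
\[
\|v\|_Y \le \eta \|v\|_X + C_\eta \|v\|_Z .
\]
This is done by contradiction. Normalise $\|v_n\|_X=1$ with $\|v_n\|_Y>\eta+n\|v_n\|_Z$. Compactness gives a subsequence with $v_n\to v$ in $Y$. Then $\|v_n\|_Z\to 0$ forces $v=0$, contradicting $\|v\|_Y\ge\eta$.

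\emph{Step 2: compactness of time averages.} For $f\in\mathcal F$ and small $h>0$, set
\[
f_h(t)=\frac1h\int_t^{t+h} f(s)\,\dd s \qquad \text{on } [0,T-h].
\]
For fixed $h$, each $f_h(t)$ lies in a bounded subset of $X$, with bound $h^{-1/p}\sup_{\mathcal F}\|f\|_{L^p(0,T;X)}$ by Hölder. Hence $f_h(t)$ ranges over a relatively compact set in $Y$, uniformly in $t$ and in $f$.

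Next I would check equicontinuity of $t \mapsto f_h(t)$ in $Y$. The difference $f_h(t_2)-f_h(t_1)$ is an integral of $f$ over two short intervals, so it is small in $X$, uniformly in $f$, and therefore small in $Y$. By Arzelà--Ascoli, $\{f_h\}_{f \in \mathcal F}$ is relatively compact in $C([0,T-h];Y)$, hence also in $L^p(0,T-h;Y)$.

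\emph{Step 3: approximation.} I would write $f_h-f=\frac1h\int_0^h (f(\cdot+s)-f)\,\dd s$ and apply Minkowski. This gives
\[
\|f_h-f\|_{L^p(0,T-h;Z)} \le \sup_{0<s\le h}\|f(\cdot+s)-f\|_{L^p(0,T-h;Z)} \to 0 \quad \text{as } h \to 0,
\]
uniformly in $f$ by hypothesis. Combining with Step 1,
\[
\|f_h-f\|_{L^p(0,T-h;Y)} \le \eta\,\|f_h-f\|_{L^p(0,T-h;X)} + C_\eta \|f_h-f\|_{L^p(0,T-h;Z)} .
\]
The $X$-term is at most $2\eta \sup_{\mathcal F}\|f\|_{L^p(0,T;X)}$ and the $Z$-term tends to zero. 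Thus $\mathcal F$ is uniformly approximated on $(0,T-h)$ by relatively compact families, so it is totally bounded in $L^p(0,T-h;Y)$ for each fixed $h$.

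The end layer $(T-h,T)$ is handled symmetrically with backward averages $\frac1h\int_{t-h}^t f$. Alternatively, one can cover $[0,T]$ by the two overlapping intervals $[0,T-h]$ and $[h,T]$ and use the same argument on each. Total boundedness on both pieces yields total boundedness in $L^p(0,T;Y)$. When $p=\infty$, the Arzelà--Ascoli step already gives uniform-in-time approximation, and one obtains compactness in $C([0,T];Y)$ for continuous representatives.

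\textbf{Main obstacle.} I expect the main difficulty to be the boundary layer near $t=T$, together with the $p=\infty$ case, where translations are only controlled on $(0,T-h)$. I would handle both with the symmetric forward/backward averages described in Step 3.
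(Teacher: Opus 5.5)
Your overall architecture (Ehrling inequality, forward and backward averages, Arzel\`a--Ascoli, uniform approximation, gluing total boundedness on $[0,T-h]$ and $[h,T]$) matches Simon's proof, which the paper cites rather than reproduces. However, one step fails as written: the equicontinuity of $t\mapsto f_h(t)$ in Step~2 when $p=1$, a case the statement explicitly includes. You derive it from the $X$-bound alone, via $\|f_h(t_2)-f_h(t_1)\|_X\le h^{-1}\big(\int_{t_1}^{t_2}+\int_{t_1+h}^{t_2+h}\big)\|f(s)\|_X\,\dd s$. H\"older turns this into $2h^{-1}|t_2-t_1|^{1-1/p}\sup_{\mathcal F}\|f\|_{L^p(0,T;X)}$, which is small only for $p>1$. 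A bounded family in $L^1(0,T;X)$ need not be uniformly integrable. For instance, take $f_n=n\,\mathbb{I}_{(a,a+1/n)}x_0$ with $x_0\in X\setminus\{0\}$. The averages $(f_n)_h$ rise from $0$ to $x_0/h$ on an interval of length $1/n$, so they are equicontinuous neither in $X$ nor in $Y$. Hence at $p=1$ boundedness in $L^1(0,T;X)$ cannot feed Arzel\`a--Ascoli; equicontinuity must come from somewhere else.

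The missing idea is that equicontinuity has to come from the translation hypothesis. Set $M:=\sup_{\mathcal F}\|f\|_{L^p(0,T;X)}$ and $\omega(\delta):=\sup_{f\in\mathcal F}\|f(\cdot+\delta)-f\|_{L^p(0,T-\delta;Z)}$. For $0\le t<t+\delta\le T-h$ you have
\[
\|f_h(t+\delta)-f_h(t)\|_Z\le \frac1h\int_t^{t+h}\|f(s+\delta)-f(s)\|_Z\,\dd s\le h^{-1/p}\,\omega(\delta),
\qquad
\|f_h(t+\delta)-f_h(t)\|_X\le 2h^{-1/p}M .
\]
Step~1 then gives $\|f_h(t+\delta)-f_h(t)\|_Y\le h^{-1/p}\big(2\eta M+C_\eta\,\omega(\delta)\big)$. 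For fixed $h$ this is small uniformly in $f$ and $t$ once you choose $\eta$ first and then $\delta$; the backward averages are handled the same way. In effect this is what Simon does: he applies the interpolation inequality to the translations first, making them uniformly small in $L^p(0,T-\delta;Y)$, and then works entirely in $Y$. With this repair the rest of your argument holds for all $1\le p\le\infty$.

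For $p=\infty$ you should also justify the existence of continuous representatives. It is Step~3 with $p=\infty$, not Arzel\`a--Ascoli, that gives $\|f_h-f\|_{L^\infty(0,T-h;Y)}\to0$ uniformly in $f$. Consequently the continuous functions $f_h$ are uniformly Cauchy on $[0,T-h_0]$ as $h\to0$, and they converge uniformly to a continuous representative of $f$. Backward averages cover $[h_0,T]$ in the same way.
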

%	
%	\begin{theorem}[Aubin--Lions--Simon compactness]\label{thm:ALS}
%		Assume $X\subset Y\subset Z$ with compact embedding $X\hookrightarrow Y$ and $X$, $Y$, $Z$ are Banach spaces. Let $1\leq p\leq \infty$ and
%		\begin{equation*}
%			\begin{array}{l}
%				F \quad \text{is bounded in }L^p(0,T;X), \\ 
%				\left\|f(t+h)-f(t)\right\|_{L^p(0,T-h;Z)}\rightarrow 0\quad \text{as }h\rightarrow 0,\quad \text{uniformly for }f\in F.
%			\end{array}
%		\end{equation*}
%		Then F is relatively compact in $L^p(0,T;Y)$ and in $C(0,T;Y)$ if $p=\infty$.
%	\end{theorem}
	
	\section{Uniform estimates on truncated domain} \label{sec: unif est for truncated}
	In this section, we establish the $p$-independent estimates needed for the compactness argument on the truncated domain. The main novelty here, compared with the periodic setting, is the Dirichlet boundary condition, which requires a separate analysis of boundary maximum and a modified Basov--Shelukhin potential adapted to the truncated domain.
	
	Let $\eta\in C_c^\infty(\mathbb{R})$ be a standard cut-off function satisfying $0\leq \eta\leq 1$, $\eta=1$ on $[-1,1]$, $\eta=0$ outside $[-2,2]$, and set $C_\eta:=\|\eta^{\prime}\|_{L^\infty}$. For each $k\geq 1$, recall $\Omega_k:=(-2k-2,\,2k+2)$ and define
	\begin{equation*}
	  \phi_k(x):=\eta \left(\frac{x}{k+1}\right).
	\end{equation*}
	Then $\phi_k\in C_c^\infty(\Omega_k)$, $0\leq \phi_k\leq 1$, $\phi_k=1$ on $[-k-1,k+1]$, and $\|\phi_k^{\prime}\|_{L^\infty}\leq C_\eta/(k+1)$.
	The truncated initial data are defined by
	\begin{equation}\label{truncated i.d.}
		\begin{cases}
			\rho_{k,0}(x)=\rho_0(x)\phi_k(x)+\rho_{\star,k}(1-\phi_k(x)),\\
			u_{k,0}(x)=u_0(x)\phi_k(x),
		\end{cases}
	\end{equation}
	where $\rho_{\star,k}=\operatorname{ess}\inf_{x\in\Omega_k}\rho_0>0$.
	Assuming $\|\partial_x u_0\|_{L_x^\infty}<1$, for $k$ sufficiently large we have
	\begin{equation*}
		\|\partial_x u_{k,0}\|_{L_x^\infty(\Omega_k)}
		\leq \|\partial_x u_0\|_{L_x^\infty} + \|u_0\|_{L_x^\infty}\frac{C_\eta}{k+1} \leq 1.
	\end{equation*}
	
	The local existence of strong solutions can be achieved using a classical Galerkin method or fixed point theorem applied to the Lagrangian formulation. We omit the details and refer to \cite{Kalousek2021}. The main point is to derive appropriate uniform bounds with respect to $p$ to define weak solutions for fixed $k$ and then pass to the limit with $p\rightarrow \infty$.
	
	\begin{theorem}
		\label{thm: local}
		For each $k =1,2,3,\ldots,$ let $(\rho_{k,0}, u_{k,0})$ be as in \eqref{truncated i.d.}. Suppose that the initial data $(\rho_{p,k,0},u_{p,k,0})\in L_{x}^{\infty}(\mathbb{R})\times W_{x}^{1,\infty}(\mathbb{R})$ satisfy
		\begin{align*}
			&0 < c_1(\Omega_k) \leq \rho_{p,k,0}(x) \leq c_2 < \infty,\qquad \|\partial_x u_{p,k,0}\|_{L_x^\infty(\Omega_k)} \leq 1,\\
			&\rho_{p,k,0}\longrightarrow \rho_{k,0},\quad u_{p,k,0}\longrightarrow u_{k,0}\quad \text{strongly in} \quad L_x^{\infty}(\mathbb{R}).
		\end{align*}
		where $c_2 > 0$ is independent of $p$ and $k$ and  $c_1(\Omega_k)>0$ is dependent on $k$ but independent of $p$. Fix any sufficiently large $k$ so that the construction of the truncated initial data yields $\|\partial_x u_{k,0}\|_{L_x^\infty(\Omega_k)}\leq 1$. Then, for each sufficiently large $p$ the truncated system 
		\eqref{PDE,pk} together with the Dirichlet boundary condition 
		$u_{p,k}|_{\partial\Omega_k}=0$ admits a weak solution $(\rho_{p,k}, u_{p,k})$.   	
		Furthermore, for any $T>0$, there exists a constant 
		$C = C(T,\gamma,\mu,a,E_0,k)$ independent of $p$ such that
		\begin{align*}
			&\sup_{t\in[0,T]} \|\rho_{p,k}(t)\|_{L_x^\infty(\Omega_k)} \leq C,\quad \int_0^T \int_{\Omega_k} |\partial_x u_{p,k}|^p\,\mathrm{d}x \,\mathrm{d}t \leq C,\\
			&\int_0^T \int_{\Omega_k} \rho_{p,k} |\dot u_{p,k}|^2\,\mathrm{d}x\,\mathrm{d}t \leq C,\quad \|\dot{u}_{p,k}\|_{L^2(0,T;L^2(\Omega_k))} \leq C.
		\end{align*}
		In the above, recall the material derivative: $\dot{u}_{p,k}:= \partial_{t}u_{p,k} + u_{p,k}\partial_{x}u_{p,k}$. Also, note that the initial energy $E_{0} = \int_{\mathbb{R}}\left(\frac{1}{2}\rho_{0}u_{0}^{2} + \frac{a}{\gamma-1}\rho_{0}^{\gamma}\right)\mathrm{d}x$ is independent of $p$ and $k$.
	\end{theorem}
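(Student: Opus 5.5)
The plan is to take a local strong solution from the Galerkin/Lagrangian construction cited above (\cite{Kalousek2021}) and extend it to $[0,T]$ through a priori estimates that do not depend on $p$. I would obtain these estimates in four steps, in the order in which they are needed.

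\textbf{Step 1: energy estimate.} Test the momentum equation in \eqref{PDE,pk} against $u_{p,k}$. The Dirichlet condition $u_{p,k}|_{\partial\Omega_k}=0$ removes all boundary terms, which gives
\begin{equation*}
\sup_{t\in[0,T]}\int_{\Omega_k}\Big(\tfrac12\rho_{p,k}u_{p,k}^2+\tfrac{a}{\gamma-1}\rho_{p,k}^\gamma\Big)\,\mathrm{d}x+\mu\int_0^T\!\!\int_{\Omega_k}|\partial_xu_{p,k}|^p\,\mathrm{d}x\,\mathrm{d}t\le E_{p,k,0}.
\end{equation*}
The right-hand side is bounded by $C(E_0,k)$ uniformly in $p$. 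To see this, use the definition \eqref{truncated i.d.} and the $L^\infty$ convergence of the approximate data; the constant state $\rho_{\star,k}$ only adds $C|\Omega_k|$ to the energy.

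\textbf{Step 2: upper bound on the density.} I would adapt the Basov--Shelukhin potential to the interval. Introduce $\xi(t,x)=\int_{-2k-2}^x\rho_{p,k}u_{p,k}\,\mathrm{d}y$. The momentum equation then reads
\begin{equation*}
\partial_t\xi+u\,\partial_x\xi=\mu\tau_{p,k}-a\rho^\gamma+\rho u^2+g(t),
\end{equation*}
where $g(t)$ is the trace of the flux at the left endpoint. The continuity equation gives $\frac{D}{Dt}\log\rho=-\partial_xu$, and on $\{|\partial_x u|>1\}$ the stress $\mu\tau$ controls $\partial_xu$. I expect to work with a quantity of the form $\Phi=\log\rho+\mu^{-1}\xi$, or an equivalent variant with $F_p(\partial_xu)$ in place of $\tau$, and to derive
\begin{equation*}
\dot\Phi\le -\tfrac{a}{\mu}\rho^\gamma+C(1+|g|+\rho u^2).
\end{equation*}
The bound on $\xi$ follows from $\|\xi\|_{L^\infty}\le\|\sqrt\rho\|_{L^2}\|\sqrt\rho u\|_{L^2}\le C(E_0,k)$. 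A maximum-principle argument along trajectories then yields $\sup_t\|\rho_{p,k}\|_{L^\infty}\le C(T,k)$.

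\textbf{Main obstacle.} The hardest step is the boundary. The points $x=\pm(2k+2)$ are themselves trajectories, since $u=0$ there. At these points there is no interior maximum, and the term $g(t)$, a boundary value of the stress or pressure, must be controlled. I would first try to control $g$ by integrating the momentum equation over $\Omega_k$ against a fixed profile, expressing $g$ through the total energy and $\int|\tau|\,\mathrm{d}x$, which is integrable in time because of Step 1. If a maximum of $\Phi$ is attained on the boundary trajectory, the ODE there is handled in the same way. This is the ``boundary maximum principle'' mentioned in the introduction. The lower bound on the density, needed later, would follow by the symmetric argument, using $\rho_{p,k,0}\ge c_1(\Omega_k)$.

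\textbf{Step 3: estimate on $\dot u$.} Test the momentum equation, written as $\rho\dot u=\mu\partial_x\tau-a\partial_x\rho^\gamma$, against $\dot u$. The Dirichlet condition gives $\dot u=0$ on the boundary. The viscous term produces
\begin{equation*}
\frac{d}{dt}\frac{\mu}{p}\int|\partial_xu|^p\,\mathrm{d}x
\end{equation*}
plus a commutator of the form $\int\tau(\partial_xu)^2$. This commutator is absorbed after rewriting $\partial_x\dot u=\dot{(\partial_xu)}+(\partial_xu)^2$ and using the density bounds. The pressure term is treated through $\frac{D}{Dt}\rho^\gamma=-\gamma\rho^\gamma\partial_xu$, together with the fact that $\frac1p\int|\partial_xu_{p,k,0}|^p\le|\Omega_k|/p$ is bounded because $|\partial_xu_{p,k,0}|\le1$. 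This gives $\int_0^T\!\int\rho|\dot u|^2\le C$. Dividing by the lower bound on $\rho$ then gives the bound $\|\dot u\|_{L^2L^2}\le C$.

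\textbf{Step 4: continuation and weak formulation.} Since all the bounds are independent of $p$ and of the existence time, the standard continuation argument extends the solution to $[0,T]$. The estimates from Steps 1--3 are exactly the ones listed in Theorem~\ref{thm: local}, and the solution satisfies \eqref{PDE,pk} in the weak sense.
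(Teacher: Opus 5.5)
Your proposal has a genuine gap: it never establishes the one-sided stress bound $\sigma_{p,k}\leq\mu$, and without it Steps~2 and~3 do not give $p$-independent estimates.

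\textbf{Step~2 does not close as stated.} Computed correctly (the $\rho u^2$ terms cancel), one has $\frac{\mathrm{D}}{\mathrm{D}t}\bigl(\log\rho_{p,k}+\mu^{-1}\xi\bigr)=\bigl(\tau_{p,k}-\partial_xu_{p,k}\bigr)-\frac{a}{\mu}\rho_{p,k}^\gamma+\mu^{-1}g(t)$. On $\{\partial_xu_{p,k}>1\}$ the bracket equals $(\partial_xu_{p,k})^{p-1}-\partial_xu_{p,k}$, which is positive and unbounded. The fact that $|\tau|\geq|\partial_xu|$ there only helps on the compressive side $\partial_xu<-1$. For $p\neq2$ there is no cancellation between $-\partial_xu$ and $\tau$ as in the Newtonian Basov--Shelukhin argument, so you need pointwise control of the positive part of the stress.

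That control is the missing ingredient: $\sigma_{p,k}=\mu|\partial_xu_{p,k}|^{p-2}\partial_xu_{p,k}-a\rho_{p,k}^\gamma\leq\mu$. The paper proves it by a maximum principle for the degenerate parabolic equation satisfied by $\sigma_{p,k}$. At an interior maximum the right-hand side is non-positive once $p\geq1+\gamma$. A boundary maximum is ruled out because $\partial_x\sigma_{p,k}=\rho_{p,k}\dot u_{p,k}=0$ on $\partial\Omega_k$ (since $u_{p,k}=0$ there), which contradicts Hopf's lemma.

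This is the ``boundary maximum principle'' of the introduction, not a maximum of $\Phi$. The boundary trajectories in your $\Phi$-argument are ordinary trajectories. The trace $g(t)$ is also harmless: only $\int_0^tg\,\mathrm{d}s$ is needed, and your fixed-profile idea controls it, equivalently to the paper's averaged normalisation of $\psi_{p,k}$.

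With $\sigma_{p,k}\leq\mu$ one gets $\mu(\tau_{p,k}-\partial_xu_{p,k})-a\rho_{p,k}^\gamma\leq\mu$ wherever $\partial_xu_{p,k}>0$, which closes the upper bound. The lower bound on the density needs the same input. Your ``symmetric argument'' fails for the mirror reason: $|z|-|z|^{p-1}$ is unbounded below for $z<-1$. The paper instead integrates $\partial_xu_{p,k}\leq(\frac{a}{\mu}\rho_{p,k}^\gamma+1)^{1/(p-1)}$ along the flow.

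\textbf{Step~3 has a related gap.} Testing with $\dot u_{p,k}$ leaves $-\mu(1-\frac1p)\int_{\Omega_k}|\partial_xu_{p,k}|^p\partial_xu_{p,k}\,\mathrm{d}x$ on the right-hand side. This term is of order $p+1$, has no sign, and is controlled neither by the energy nor by the density bounds. The identity $\partial_x\dot u=\frac{\mathrm{D}}{\mathrm{D}t}\partial_xu+(\partial_xu)^2$ is what creates this term, not what absorbs it.

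The paper writes it as $(1-\frac1p)\int_{\Omega_k}|\partial_xu_{p,k}|^2(\sigma_{p,k}+a\rho_{p,k}^\gamma)\,\mathrm{d}x$ and moves the pressure part to the left. It then bounds $\sigma_{p,k}$ from both sides through $\sigma_{p,k}(t,x)=\int_{x(t)}^x\rho_{p,k}\dot u_{p,k}\,\mathrm{d}z-a\rho_{p,k}^\gamma(t,x(t))$. Here $x(t)$ is a point with $\partial_xu_{p,k}(t,x(t))=0$, which exists because $\int_{\Omega_k}\partial_xu_{p,k}\,\mathrm{d}x=0$ by the Dirichlet condition.

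Cauchy--Schwarz then produces $\frac12\int_0^t\!\int_{\Omega_k}\rho_{p,k}|\dot u_{p,k}|^2$, which is absorbed into the left-hand side. The remaining term is $C\int_0^t\!\int_{\Omega_k}|\partial_xu_{p,k}|^4\leq C\int_0^t\!\int_{\Omega_k}(1+|\partial_xu_{p,k}|^p)$, and Gr\"onwall closes the estimate.
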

	
	\subsection{Energy estimates}
	The following proposition provides the energy inequality which will be used frequently throughout the paper.
	\begin{proposition}[energy estimates]\label{prop: energy}
		Under the assumptions of $\rho_{p,k,0}$ and $u_{p,k,0}$ stated in Theorem \ref{thm: local}, suppose that $(\rho_{p,k},u_{p,k})$ is the solution of Eq.~\eqref{PDE,pk}. Then the following energy estimate holds
		\begin{equation}\label{es: energy pk}
			\int_{\Omega_k}\left[\frac{1}{2}\rho_{p,k}\left|u_{p,k}\right|^2+\frac{a}{\gamma-1}\rho_{p,k}^{\gamma}\right](x,t)\,\mathrm{d}x+\mu\int_0^t\int_{\Omega_k}\left|\partial_x u_{p,k}\right|^p\,\mathrm{d}x\,\dd s= E_{p,k,0},
		\end{equation}
		where
		\begin{equation*}
			E_{p,k,0}=\int_{\Omega_k}\left[\frac{1}{2}\rho_{p,k,0}\left|u_{p,k,0}\right|^2+\frac{a}{\gamma-1}\rho_{p,k,0}^{\gamma}\right]\,\mathrm{d}x.
		\end{equation*}
		Let $E_{k,0}=\int_{\Omega_k}\left[\frac{1}{2}\rho_{k,0}\left|u_{k,0}\right|^2+\frac{a}{\gamma-1}\rho_{k,0}^{\gamma}\right]\,\mathrm{d}x$ denote the energy of the truncated initial data, and $E_0:=\int_{\mathbb{R}}\left(\frac{1}{2}\rho_0 u_0^2+\frac{a}{\gamma-1}\rho_0^\gamma\right)\,\mathrm{d}x$ the total initial energy. Then 
		\begin{equation*}
			E_{p,k,0}\leq E_{k,0}\leq E_0.
		\end{equation*}
	\end{proposition}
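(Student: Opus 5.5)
The plan has three parts: derive the energy identity \eqref{es: energy pk} by testing the momentum equation with $u_{p,k}$, compare $E_{k,0}$ with $E_0$ pointwise using the truncation \eqref{truncated i.d.}, and obtain $E_{p,k,0}\le E_{k,0}$ by fixing a convexity-preserving choice of the smooth approximations. Throughout I drop the indices and write $(\rho,u)=(\rho_{p,k},u_{p,k})$. The computation is done for the strong solutions supplied by the local theory (\cite{Kalousek2021}), on which $\rho>0$ and all integrations by parts are legitimate.

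\textbf{Energy identity.} I multiply the momentum equation in \eqref{PDE,pk} by $u$ and use the continuity equation to rewrite each term.
\begin{itemize}
\item \emph{Inertial terms.} One has $u\,\partial_t(\rho u)+u\,\partial_x(\rho u^2)=\partial_t(\tfrac12\rho u^2)+\partial_x(\tfrac12\rho u^3)$.
\item \emph{Pressure term.} One has $u\,\partial_x\rho^\gamma=\partial_x(u\rho^\gamma)-\rho^\gamma\partial_x u$. The renormalised continuity equation gives
\[
\partial_t\Big(\tfrac{\rho^\gamma}{\gamma-1}\Big)+\partial_x\Big(\tfrac{u\rho^\gamma}{\gamma-1}\Big)+\rho^\gamma\partial_x u=0 ,
\]
so $a\,u\,\partial_x\rho^\gamma=\partial_t\big(\tfrac{a}{\gamma-1}\rho^\gamma\big)+\partial_x\big(\tfrac{a\gamma}{\gamma-1}u\rho^\gamma\big)$.
\item \emph{Viscous term.} One has $-\mu u\,\partial_x\tau=-\mu\partial_x(u\tau)+\mu|\partial_xu|^p$.
\end{itemize}
I then integrate over $(0,t)\times\Omega_k$. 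Every flux term has the form $\partial_x(u\,\cdot)$, so it vanishes because $u=0$ on $\partial\Omega_k$. This yields \eqref{es: energy pk} with the initial value $E_{p,k,0}$.

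If the solutions are only weak, I first obtain the identity for the regularised problem and then pass to the limit by weak lower semicontinuity. This step produces only ``$\le$'', which is all that is used later.

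\textbf{$E_{k,0}\le E_0$.} On $\Omega_k$ we have $\rho_{\star,k}\le\rho_0$ a.e. and $0\le\phi_k\le1$. Hence
\[
\rho_{k,0}=\phi_k\rho_0+(1-\phi_k)\rho_{\star,k}\le\rho_0 ,
\]
and therefore $\rho_{k,0}^\gamma\le\rho_0^\gamma$. Moreover $u_{k,0}^2=\phi_k^2u_0^2\le u_0^2$, so $\rho_{k,0}u_{k,0}^2\le\rho_0u_0^2$. Both integrands are nonnegative, so $E_{k,0}\le\int_{\Omega_k}\big(\tfrac12\rho_0u_0^2+\tfrac{a}{\gamma-1}\rho_0^\gamma\big)\,\mathrm{d}x\le E_0$.

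\textbf{$E_{p,k,0}\le E_{k,0}$.} This is the delicate point, since the hypotheses only say that the smooth approximations converge strongly. I will fix the approximations concretely; this is where the proposal relies on a construction choice not forced by the statement. I would construct them by mollification in the conserved variables $(\rho,m)=(\rho_{k,0},\rho_{k,0}u_{k,0})$. Set $\rho_{p,k,0}=\rho_{k,0}*\omega_{\varepsilon_p}$ and $m_{p,k,0}=m*\omega_{\varepsilon_p}$, with $\varepsilon_p\to0$, and define $u_{p,k,0}=m_{p,k,0}/\rho_{p,k,0}$.

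The energy density $(\rho,m)\mapsto \frac{m^2}{2\rho}+\frac{a}{\gamma-1}\rho^\gamma$ is jointly convex on $\{\rho>0\}$. Jensen's inequality applied with the probability kernel $\omega_{\varepsilon_p}$ then gives a pointwise bound of the mollified energy by the mollified energy density. Integrating gives $E_{p,k,0}\le E_{k,0}$.

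The two sources of potential loss are handled by the truncated structure. The extension outside $\Omega_k$ is by the constant state $(\rho_{\star,k},0)$ near $\partial\Omega_k$, where $\phi_k=0$. Boundary cut-off effects are absorbed because $u_{k,0}$ already vanishes on a neighbourhood of $\partial\Omega_k$ (indeed $\phi_k$ is supported in $[-2k-2,2k+2]$ and vanishes near the endpoints after a slight adjustment of the support of $\eta$). The remaining hypotheses must also be checked for this construction: the bounds $c_1(\Omega_k)\le\rho_{p,k,0}\le c_2$ are preserved under mollification, and $\|\partial_xu_{p,k,0}\|_{L^\infty}\le1$ is arranged by a final scaling of $u$ by a factor $\to1$, which only decreases the kinetic energy. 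Checking these compatibility conditions is the step I expect to be the main obstacle.
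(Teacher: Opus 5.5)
Your derivation of \eqref{es: energy pk} and your pointwise proof of $E_{k,0}\le E_0$ are the same as the paper's: you test with $u_{p,k}$, use the renormalised equation for $\rho_{p,k}^\gamma/(\gamma-1)$, and kill every flux with $u_{p,k}|_{\partial\Omega_k}=0$.

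The genuine difference is the inequality $E_{p,k,0}\le E_{k,0}$. The paper simply asserts that the smooth approximations ``preserve these bounds''. You instead observe, correctly, that this inequality does not follow from the listed hypotheses. For example, if $\rho_{k,0},u_{k,0}$ are smooth and $\sup\rho_{k,0}<c_2$, then $\rho_{p,k,0}=\rho_{k,0}+1/p$, $u_{p,k,0}=u_{k,0}$ is admissible but strictly raises the energy. You then produce an admissible family for which the inequality does hold. You mollify the conserved variables and apply Jensen's inequality to the jointly convex density $e(\rho,m)=\frac{m^2}{2\rho}+\frac{a}{\gamma-1}\rho^\gamma$.

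Your route gives a genuine integrated inequality with no pointwise one-sided condition. The paper's assumption, read literally as $\rho_{p,k,0}\le\rho_0$ and $|u_{p,k,0}|\le|u_0|$, only yields $E_{p,k,0}\le E_0$. The middle inequality would need $\rho_{p,k,0}\le\rho_{k,0}$ and $|u_{p,k,0}|\le|u_{k,0}|$, which are extra, unlisted requirements. The price of your route is that the claim holds only for your family, and that is unavoidable.

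To make the argument airtight, state explicitly why $\int_{\Omega_k}(\omega_\varepsilon*e)\,\mathrm{d}x=\int_{\Omega_k}e\,\mathrm{d}x$. The reason is that $e(\rho_{k,0},\rho_{k,0}u_{k,0})\equiv\frac{a}{\gamma-1}\rho_{\star,k}^\gamma$ on a two-sided $\varepsilon$-neighbourhood of $\partial\Omega_k$; the vanishing of $u_{k,0}$ alone is not enough.

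The compatibility step you flagged closes once you note that $L^\infty$ convergence of smooth approximants forces $\rho_{k,0}$ to be continuous. Then, with $\rho_\varepsilon=\omega_\varepsilon*\rho_{k,0}$ and $m_\varepsilon=\omega_\varepsilon*(\rho_{k,0}u_{k,0})$,
\[
\partial_x\Big(\frac{m_\varepsilon}{\rho_\varepsilon}\Big)=\frac{\rho_{k,0}}{\rho_\varepsilon}\,\big(\omega_\varepsilon*\partial_x u_{k,0}\big)+o(1)\qquad\text{uniformly as }\varepsilon\to0,
\]
so the rescaling factor indeed tends to $1$.

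Finally, the rest of the paper only uses three facts: a $p$-uniform bound on $E_{p,k,0}$, the convergence $E_{p,k,0}\to E_{k,0}$ (immediate from uniform convergence on the bounded interval $\Omega_k$), and $E_{k,0}\le E_0$. A much lighter fix would therefore also suffice.
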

	\begin{proof}
		Multiplying both sides of the second equation in \eqref{PDE,pk} by $u_{p,k}$ and integrating by parts, we obtain
		\begin{equation*}
			\frac{\mathrm{d}}{\mathrm{d}t}\int_{\Omega_k}  \rho_{p,k}\frac{|u_{p,k}|^2}{2}\,\mathrm{d}x +\int_{\Omega_k}\left[\mu|\partial_x u_{p,k}|^p-a\rho_{p,k}^{\gamma} \partial_x u_{p,k}\right]\,\mathrm{d}x=0.
		\end{equation*}
		From the continuity equation, we deduce
		\begin{equation*}
			-(\gamma-1)\rho_{p,k}^{\gamma} \partial_x u_{p,k}=\left(\rho_{p,k}^{\gamma}\right)_t +\partial_x(\rho_{p,k}^{\gamma}u_{p,k}).
		\end{equation*}
		Thus
		\begin{equation*}
			\frac{\mathrm{d}}{\mathrm{d}t}\int_{\Omega_k} \frac{\rho_{p,k}^{\gamma}}{\gamma-1}\,\mathrm{d}x =-\int_{\Omega_k} \rho_{p,k}^{\gamma} \partial_x u_{p,k} \,\mathrm{d}x.
		\end{equation*}
		Adding the two identities and integrating over $(0,t)$ yields desired estimates \eqref{es: energy pk}. 	
		
		Since $0\leq \phi_k(x)\leq 1$ and $\rho_{\star,k} = \operatorname{ess}\inf_{\Omega_k}\rho_0\leq \rho_0(x)$, we obtain the point-wise bounds
		\begin{equation*}
			\rho_{k,0}(x)=\rho_0(x)\phi_k(x)+\rho_{\star, k}(1-\phi_k(x)) \leq \rho_0(x),
			\qquad |u_{k,0}(x)| = |u_0(x)|\phi_k(x)\leq |u_0(x)|.
		\end{equation*}
		Therefore, for the smooth approximations $(\rho_{p,k,0},u_{p,k,0})$, which preserve these bounds, we have
		\begin{align*}
			\int_{\Omega_k}\left(\frac{1}{2}\rho_{p,k,0}u_{p,k,0}^2+\frac{a}{\gamma-1}\rho_{p,k,0}^\gamma\right)\,\mathrm{d}x \leq&
			\int_{\Omega_k}\left(\frac{1}{2}\rho_{k,0} u_{k,0}^2+\frac{a}{\gamma-1}\rho_{k,0}^\gamma\right)\,\mathrm{d}x \\
			\leq &
			\int_{\mathbb{R}}\left(\frac{1}{2}\rho_0 u_0^2+\frac{a}{\gamma-1}\rho_0^\gamma\right)\,\mathrm{d}x.
		\end{align*}
		This completes the proof.
	\end{proof}
	\subsection{Uniform estimates independent of $p$}
	The following proposition establishes the key pointwise bounds for the density and the stress of the truncated system~\eqref{PDE,pk}, which are uniform in the power-law index $p$. Our proof is a straightforward adaptation of that in \cite{Bresch2026};  only slight modifications are needed to deal with the homogenous Dirichlet boundary data, in contract to the periodic condition in \cite{Bresch2026}. We present a detailed proof in Appendix~\ref{sec: appendix}, without claiming any originality.
	
	\begin{proposition}\label{prop: bound for stress and rho}
		Let $(\rho_{p,k},u_{p,k})$ be a smooth solution of system \eqref{PDE,pk}. Then, we have for $(t,x)\in[0,T]\times \Omega_k$ that
		\begin{equation}\label{es: bound stress}
			\mu\left|\partial_x u_{p,k}\right|^{p-2}\partial_x u_{p,k}-a\rho_{p,k}^{\gamma}\leq \mu\left|\partial_x u_{p,k,0}\right|^{p-2}\partial_x u_{p,k,0}-a\rho_{p,k,0}^{\gamma}\leq \mu,
		\end{equation}
		and
		\begin{equation}\label{es: bound rho_pk}
			\frac{c_1(\Omega_k) e^{-2t}}{\max\left\{1,\left(\frac{a}{\mu}\right)^{\frac{1}{\gamma}}c_2\right\}}\leq \rho_{p,k}(x,t)\leq \sup\limits_{x\in\Omega_k}\rho_{p,k,0}(x)\exp\left[C(T,a,\mu,\gamma,k,E_0)+\frac{3}{2}t\right],
		\end{equation}	
		where $c_1(\Omega_k)$ is the positive constant depending on $\Omega_k$ and $c_2$ depends on the initial data as in Theorem \ref{thm: main}.
	\end{proposition}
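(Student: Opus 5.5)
The natural object is the effective flux $\sigma := \mu\tau_{p,k} - a\rho_{p,k}^\gamma$, where $\tau_{p,k}=|\partial_x u_{p,k}|^{p-2}\partial_x u_{p,k}$; we drop the indices $p,k$ below. In one dimension the momentum equation reads $\rho\dot u = \partial_x\sigma$. The plan is to derive a parabolic-type equation for $\sigma$ along particle paths, apply a maximum principle to get \eqref{es: bound stress}, and then use the continuity equation in Lagrangian form together with a Basov--Shelukhin potential to get \eqref{es: bound rho_pk}.

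\emph{Upper bound on $\sigma$.} Differentiating $\tau=|\partial_x u|^{p-2}\partial_x u$ along trajectories gives
\[
\dot\tau=(p-1)|\partial_x u|^{p-2}\big(\partial_x\dot u-(\partial_x u)^2\big).
\]
The continuity equation gives $\dot{(\rho^\gamma)}=-\gamma\rho^\gamma\partial_x u$. With $\partial_x\dot u=\partial_x(\rho^{-1}\partial_x\sigma)$ we then get
\[
\dot\sigma=\mu(p-1)|\partial_x u|^{p-2}\partial_x\!\left(\tfrac1\rho\partial_x\sigma\right)-\mu(p-1)|\partial_x u|^{p}+a\gamma\rho^\gamma\partial_x u .
\]
Consider an interior maximum point where $\sigma>0$. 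There $\partial_x\sigma=0$ and $\partial_x^2\sigma\le 0$, so the diffusion term is $\le0$. Moreover $\mu\tau>a\rho^\gamma>0$, so $\partial_x u>0$. The zeroth-order terms combine to $-\mu(p-1)|\partial_x u|^p+a\gamma\rho^\gamma\partial_x u$. Using $a\rho^\gamma<\mu(\partial_x u)^{p-1}$, this is at most $\mu(\gamma-p+1)(\partial_x u)^p<0$ once $p>\gamma+1$. This is where "$p$ large" enters.

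\emph{Boundary maximum.} This is the new point compared with the periodic case, and I expect it to be the main obstacle. At $x\in\partial\Omega_k$ we have $u=0$, so $\partial_t u=0$ and hence $\dot u=0$. Thus $\partial_x\sigma=\rho\dot u=0$ on the boundary, which is a homogeneous Neumann condition for $\sigma$. The Hopf lemma (or a reflection argument) then rules out a strict positive maximum on the boundary unless it is attained at $t=0$. Justifying this for the degenerate parabolic operator $\mu(p-1)|\partial_x u|^{p-2}\partial_x(\rho^{-1}\partial_x\cdot)$ requires care. I would perturb to $\sigma-\varepsilon t$, or argue via the sign of $\partial_x\sigma$ near the boundary directly from the Neumann condition. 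Hence $\sigma\le\max\{\sup\sigma_0,0\}$. Since $|\partial_x u_{0}|\le1$, we have $\mu\tau_0\le\mu$, so $\sigma_0\le\mu$, giving \eqref{es: bound stress}.

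\emph{Lower bound on $\rho$.} Write $\dot\rho=-\rho\partial_x u$. From \eqref{es: bound stress}, $\mu|\partial_x u|^{p-2}\partial_x u\le\mu+a\rho^\gamma$, so wherever $\partial_x u>0$ we get $\partial_x u\le(1+\tfrac a\mu\rho^\gamma)^{1/(p-1)}$. A comparison argument along characteristics then applies: as long as $\rho\le\max\{1,(a/\mu)^{1/\gamma}c_2\}$-type bounds hold, this gives $\partial_x u\le 2$ for large $p$. Hence $\dot{(\log\rho)}\ge-2$, which yields the stated lower bound $c_1e^{-2t}/\max\{\dots\}$.

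\emph{Upper bound on $\rho$.} I would use a modified Basov--Shelukhin potential. Set $\psi(t,x)=\int_{-2k-2}^{x}\rho u\,dy$ (adjusted by the flux through the boundary, which vanishes because $u=0$ there). Then $\partial_t\psi=\sigma-\sigma(t,-2k-2)-\rho u^2$, which gives
\[
\tfrac{D}{Dt}\big(\text{potential}\big)=\mu\tau-a\rho^\gamma+\text{(bounded terms)}.
\]
Combined with $\mu\,\tfrac{D}{Dt}\Phi(\rho)$-type identities, this controls $\log\rho$ along characteristics in terms of $\|\psi\|_\infty\lesssim\sqrt{E_0}\,\|\rho\|_{L^1(\Omega_k)}^{1/2}$ and the boundary trace of $\sigma$. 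The boundary trace is handled by integrating $\sigma$ over $\Omega_k$ and using the energy estimate, Proposition~\ref{prop: energy}. This produces the $k$-dependent constant $C(T,a,\mu,\gamma,k,E_0)$ and the factor $e^{3t/2}$.
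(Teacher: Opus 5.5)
Your proposal is correct and follows the paper's route. You use the parabolic equation for $\sigma$ with the maximum principle for $p>\gamma+1$, exclude boundary maxima through the Neumann condition $\partial_x\sigma=\rho\dot u=0$ inherited from $u|_{\partial\Omega_k}=0$, prove a Lagrangian lower bound, and use a Basov--Shelukhin potential for the upper bound. Your endpoint-anchored potential differs from the paper's averaged one (up to sign and a constant) only by $\int_0^t\sigma(s,-2k-2)\,\mathrm{d}s$, which you rightly control by averaging over $\Omega_k$ and the energy identity. Note that $-\mu\partial_x u+\sigma\le\mu$ on $\{\partial_x u<0\}$ comes from $|z|(1-|z|^{p-2})\le 1$, not from the bound on $\sigma$.

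The one real variation is the lower bound. The paper integrates the exact differential inequality for $\rho^{-\gamma/(p-1)}$ along trajectories, which needs no restriction on $p$ beyond $p\ge\gamma+1$ and produces the $\max\{\cdot\}$ denominator. Your threshold comparison is also valid, provided you set it up so that $\partial_x u\le 2$ is only used while $\rho\le\rho_0\le c_2$; it then gives the stronger bound $\rho\ge\rho_0e^{-2t}$ for $p$ large depending only on $a,\mu,\gamma,c_2$.
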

	\begin{proof}[Sketch proof of Proposition \ref{prop: bound for stress and rho}]
		Following \cite{Bresch2026}, we set $\sigma_{p,k}=\mu|\partial_x u_{p,k}|^{p-2}\partial_x u_{p,k}-a\rho_{p,k}^\gamma$ and prove that
		\begin{equation*}
			\sigma_{p,k}(t,x)\leq \sup_{x\in\Omega_k}\sigma_{p,k}(0,x)\leq \mu
		\end{equation*}
		for all $(t,x)\in[0,T]\times\overline{\Omega}_k$.
		We first consider two trivial cases. If $\sigma_{p,k}$ is constant in space, then the desired bound follows immediately from the initial condition, since $\sigma_{p,k}(t,x)=\sigma_{p,k}(0,x)\leq \mu$ for all $t\geq 0$. On the other hand, if the maximum value of $\sigma_{p,k}$ is non-positive, then the conclusion $\sigma_{p,k}\leq \mu$ is trivially satisfied because $\mu>0$. Hence, in the sequel, we may assume that $\sigma_{p,k}$ is non-constant and its maximum is strictly positive.
		
		Direct computation (see Appendix~\ref{sec: appendix}) leads to the parabolic equation for $\sigma_{p,k}$:
		\begin{equation}\label{eq: dt sigma}
			\begin{aligned}
				&\partial_{t}\sigma_{p,k} + u_{p,k}\partial_{x}\sigma_{p,k} - \mu H'\left(\partial_{x}u_{p,k}\right)\partial_{x}\left(\frac{1}{\rho_{p,k}}\partial_{x}\sigma_{p,k}\right)\nonumber\\
				&\qquad= -\gamma \sigma_{p,k}\partial_{x}u_{p,k} + \mu (1 + \gamma -p)\left|\partial_{x}u_{p,k}\right|^{p},
			\end{aligned}
		\end{equation}
		where $H(s)=|s|^{p-2}s$.
		
		For an interior maximum point $x_1\in \Omega_k$, the classical parabolic maximum principle applies. Since the right-hand side of \eqref{eq: dt sigma} is non-positive due to the monotonicity of $H^{-1}$, we conclude that $\sigma_{p,k}\leq \mu$ in the interior.
		
		To exclude the boundary maximum points, we apply a new argument that has not appeared in \cite{Bresch2026}. Indeed, if the maximum point $x_{1}$ lies on the boundary, without loss of generality, suppose that $x_{1} = 2k + 2$. From the homogeneous Dirichlet boundary condition, we infer that $\partial_{t}u_{p,k}(t,x_{1}) = 0$ for all $t\geq 0$. However, the momentum equation~\eqref{PDE,pk} reads 
		\begin{equation}\label{eq: momentum boundary}
			-\mu\partial_x\left(|\partial_x u_{p,k}|^{p-2}\partial_x u_{p,k}\right)+a\,\partial_x\rho_{p,k}^\gamma =0\qquad\text{at }x=x_1,
		\end{equation}
		Hence, we have the Neumann condition $\partial_{x}\sigma_{p,k}(t,x_{1}) = 0$. As $\sigma_{p,k}(t,x_{1}) = M > 0$ is a maximum, Hopf's lemma implies that the inward normal derivative must be strictly positive:
		\begin{equation*}
			\lim_{h\rightarrow 0^+} \frac{\sigma_{p,k}(t,x_1)-\sigma_{p,k}(t,x_1-h)}{h}> 0.
		\end{equation*}
	    This leads to contradiction. Hence, a boundary maximum cannot exist, so $\sigma_{p,k}(t,x)\leq \mu$ for every $(t,x)\in[0,T]\times\overline{\Omega}_k$. Consequently,
		\begin{equation*}
			\sup_{x\in\Omega_k} \left|\partial_x u_{p,k}\right|^{p-2}\partial_x u_{p,k}\leq \frac{a}{\mu}\rho_{p,k}^{\gamma}+1,
		\end{equation*}
		and hence
		\begin{equation}\label{es: u_x}
			\partial_x u_{p,k}\leq \left(\frac{a}{\mu}\rho_{p,k}^{\gamma}(t,x)+1\right)^{\frac{1}{p-1}}\quad \text{for any } x\in \Omega_k.
		\end{equation}
		
		With \eqref{es: u_x} at hand, we may follow the arguments in Bresch--Burtea--Szlenk~\cite{Bresch2026} to conclude. We only sketch the proof here and refer to Appendix~\ref{sec: appendix} for details. Indeed, by integrating along the flow map, we arrive at the lower bound for $\rho_{p,k}$: 
		\begin{align*}
			\rho_{p,k}(t,X_{t}(x))&\geq \frac{\rho_{0}e^{-2t}}{\max\left\{1,\left(\frac{a}{\mu}\right)^{1/\gamma}\|\rho_{0}\|_{L^{\infty}}\right\}}\\
			&\geq \frac{c_1(\Omega_k) e^{-2t}}{\max\left\{1,\left(\frac{a}{\mu}\right)^{\frac{1}{\gamma}}c_2\right\}}.
		\end{align*}
		For the upper bound, we use the potential method as in Basov--Shelukhin~\cite{Basov1999}, but with a modified definition of $\psi_{p,k}$ adapted to the Dirichlet boundary condition:
		\begin{align}
			\label{basov-shelukhin}
			\psi_{p,k}(t,x) &= \int_{0}^{t}\left(\rho_{p,k}u_{p,k}^{2} - \mu |\partial_{x}u_{p,k}|^{p - 2}\partial_{x}u_{p,k} + a\rho_{p,k}^{\gamma}\right)(s,x)\,\mathrm{d}s\nonumber\\
			&\qquad 
			- \frac{1}{|\Omega_{k}|}\int_{\Omega_{k}}\left(\int_{y}^{x}\left(\rho_{p,k,0}u_{p,k,0}\right)(z)\,\mathrm{d}z\right)\,\mathrm{d}y.
		\end{align}
		It holds (\emph{cf.} Appendix~\ref{sec: appendix}) that 
		\begin{equation}
			\label{crucial estimate for psi}
			\left|\psi_{p,k}(t,x) - \psi_{p,k}(0,x)\right|\leq C(T,a,\mu,\gamma,k,E_0) + \frac{\mu}{2} t, 
		\end{equation}
		which leads to 
		\begin{equation*}
			\rho_{p,k}(x,t)\leq \sup\limits_{x\in\Omega_k}\rho_{p,k,0}(x)\exp\left[C(T,a,\mu,\gamma,k,E_0)+\frac{3}{2}t\right].
		\end{equation*}
		From here, we may deduce estimates for time derivatives using ideas introduced by Hoff~\cite{Hoff1987,Hoff1995}. The estimates on $\dot{u}_{p,k}$ are analogous to those in \cite[Proposition 2.4]{Bresch2026}. 
	\end{proof}

	The detailed proof of the following two propositions are also deferred to Appendix~\ref{sec: appendix}; only the key ideas are sketched below.
	\begin{proposition}
		\label{prop: rho dotu}
		Let $(\rho_{p,k},u_{p,k})$ be a smooth solution of Eq.~\eqref{PDE,pk}. Then, the following estimates hold
		\begin{equation}
			\int_{0}^{t}\int_{\Omega_k}\rho_{p,k}\left|\dot{u}_{p,k}\right|^2(s,x)\,\mathrm{d}x\,\mathrm{d}s+\frac{\mu}{2}\int_{\Omega_k}\frac{\left|\partial_x u_{p,k}\right|^p}{p}(t,x)\,\mathrm{d}x\leq C(T,\gamma,\mu,a, E_0,k).
		\end{equation}
	\end{proposition}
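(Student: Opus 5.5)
Test the momentum equation against $\dot u_{p,k}$, in the spirit of Hoff's estimates as adapted in \cite[Proposition 2.4]{Bresch2026}. Writing the momentum equation in non-conservative form, $\rho_{p,k}\dot u_{p,k} = \mu\partial_x\tau_{p,k} - a\partial_x\rho_{p,k}^\gamma = \partial_x\sigma_{p,k}$, with $\sigma_{p,k}$ as in the proof of Proposition~\ref{prop: bound for stress and rho}, we multiply by $\dot u_{p,k}=\partial_t u_{p,k}+u_{p,k}\partial_x u_{p,k}$ and integrate over $\Omega_k$.

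The Dirichlet condition $u_{p,k}=0$ on $\partial\Omega_k$ gives $\partial_t u_{p,k}=0$ and hence $\dot u_{p,k}=0$ on $\partial\Omega_k$. Therefore all boundary terms vanish after integrating by parts, which yields
\begin{equation*}
\int_{\Omega_k}\rho_{p,k}|\dot u_{p,k}|^2\,\mathrm{d}x=-\int_{\Omega_k}\sigma_{p,k}\,\partial_x\dot u_{p,k}\,\mathrm{d}x .
\end{equation*}
We then use $\partial_x\dot u=\partial_t\partial_x u+u\partial_x^2u+(\partial_xu)^2$.

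For the viscous part, $-\mu\int \tau\,(\partial_t\partial_xu+u\partial_x^2u)$ equals $-\frac{\mu}{p}\frac{\mathrm{d}}{\mathrm{d}t}\int|\partial_xu|^p$ minus $\frac{\mu}{p}\int u\,\partial_x|\partial_xu|^p$. Integrating the latter by parts gives $+\frac{\mu}{p}\int|\partial_xu|^p\partial_xu$. The remaining term $-\mu\int\tau(\partial_xu)^2=-\mu\int|\partial_xu|^p\partial_xu$ combines with it into $-\mu(1-\frac1p)\int|\partial_xu|^p\partial_xu$.

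For the pressure part, $a\int\rho^\gamma\partial_x\dot u$, we use the renormalised continuity equation $\partial_t\rho^\gamma+\partial_x(\rho^\gamma u)=-(\gamma-1)\rho^\gamma\partial_xu$. This gives
\begin{equation*}
a\int\rho^\gamma\partial_x\dot u=a\frac{\mathrm{d}}{\mathrm{d}t}\int\rho^\gamma\partial_xu+a\gamma\int\rho^\gamma(\partial_xu)^2 ,
\end{equation*}
again with no boundary contributions.

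Integrating in time, we obtain
\begin{equation*}
\int_0^t\!\!\int\rho|\dot u|^2+\frac{\mu}{p}\int|\partial_xu|^p(t)\le \frac{\mu}{p}\int|\partial_xu_0|^p+a\Big|\int\rho^\gamma\partial_xu\Big|_{0}^{t}+\int_0^t\!\!\int\big(\mu|\partial_xu|^{p}\,|\partial_xu|+a\gamma\rho^\gamma|\partial_xu|^2\big).
\end{equation*}
The initial term is at most $\mu|\Omega_k|/p$, since $|\partial_xu_{p,k,0}|\le1$. The boundary term $a\int\rho^\gamma\partial_xu(t)$ is handled by Young's inequality, $\rho^\gamma|\partial_xu|\le\frac{\mu}{2p}|\partial_xu|^p+C_p\rho^{\gamma p'}$. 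Here $C_p$ stays bounded as $p\to\infty$ because $(\mu/2p)^{-1/(p-1)}\to1$, and $\rho$ is bounded by Proposition~\ref{prop: bound for stress and rho}. This absorbs half of the left-hand $p$-term and produces the factor $\mu/2$ in the statement. The last term is bounded using the density bounds and the energy estimate (Proposition~\ref{prop: energy}): $\int_0^t\int\rho^\gamma|\partial_xu|^2$ is controlled by Hölder as $|\Omega_k|^{1-2/p}\|\partial_xu\|_{L^p}^2$.

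The main obstacle is the cubic-type term $\int_0^t\int|\partial_xu|^{p+1}$, which the energy does not control. We handle it with the one-sided bound \eqref{es: u_x}, namely $\partial_xu\le(\frac a\mu\rho^\gamma+1)^{1/(p-1)}\le C$ uniformly in $p$, and split by sign. Only the positive part $\mu(1-\frac1p)\int|\partial_xu|^p(\partial_xu)_+$ appears with the bad sign (moved to the right), and it is at most $C\mu\int|\partial_xu|^p$, which is integrable in time by \eqref{es: energy pk}. The negative part $\int|\partial_xu|^p(\partial_xu)_-$ enters with the favourable sign and is simply dropped, so I will keep the sign bookkeeping explicit rather than taking absolute values. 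Gathering everything gives a bound $C(T,\gamma,\mu,a,E_0,k)$, where the $k$-dependence enters through $|\Omega_k|$ and $c_1(\Omega_k)$. For weak solutions the computation is first done for the smooth approximations and passed to the limit by lower semicontinuity.
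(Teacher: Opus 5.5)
There is a genuine gap: the sign bookkeeping at the step you call the main obstacle is reversed. Your own computation gives
\[
\int_{\Omega_k}\rho_{p,k}|\dot u_{p,k}|^2\,\mathrm{d}x+\frac{\mu}{p}\frac{\mathrm{d}}{\mathrm{d}t}\int_{\Omega_k}|\partial_x u_{p,k}|^p\,\mathrm{d}x=-\mu\Big(1-\frac1p\Big)\int_{\Omega_k}|\partial_x u_{p,k}|^p\,\partial_x u_{p,k}\,\mathrm{d}x+a\,\frac{\mathrm{d}}{\mathrm{d}t}\int_{\Omega_k}\rho_{p,k}^\gamma\,\partial_x u_{p,k}\,\mathrm{d}x+a\gamma\int_{\Omega_k}\rho_{p,k}^\gamma|\partial_x u_{p,k}|^2\,\mathrm{d}x .
\]
So the cubic-type term is nonpositive on $\{\partial_x u_{p,k}>0\}$ and nonnegative on $\{\partial_x u_{p,k}<0\}$. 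The expansive part, which you propose to control with \eqref{es: u_x}, is harmless and can simply be dropped. The dangerous contribution is the compressive one,
\[
\mu\Big(1-\frac1p\Big)\int_0^t\int_{\Omega_k}|\partial_x u_{p,k}|^{p+1}\,\mathbb{I}_{\{\partial_x u_{p,k}<0\}}\,\mathrm{d}x\,\mathrm{d}s .
\]
This is the same phenomenon as in Hoff's Newtonian estimate, where $-\frac{\mu}{2}\int(\partial_x u)^3$ is bad under compression. The bound \eqref{es: u_x} is one-sided and says nothing about large negative $\partial_x u_{p,k}$. It has no counterpart from below, because the source $\mu(1+\gamma-p)|\partial_x u_{p,k}|^p$ in the $\sigma_{p,k}$-equation has the wrong sign for a minimum principle. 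The energy controls only $\int_0^t\int|\partial_x u_{p,k}|^p$, not the $(p+1)$-st power. So the estimate does not close as proposed.

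The missing idea is an $L^\infty_x$ bound on the effective flux $\sigma_{p,k}$ in terms of $\rho_{p,k}\dot u_{p,k}$, which is exactly what the paper uses. Write $\mu|\partial_x u_{p,k}|^p\partial_x u_{p,k}=(\sigma_{p,k}+a\rho_{p,k}^\gamma)|\partial_x u_{p,k}|^2$; the piece $a\rho_{p,k}^\gamma|\partial_x u_{p,k}|^2$ has the good sign. Since $\int_{\Omega_k}\partial_x u_{p,k}\,\mathrm{d}x=0$ by the Dirichlet condition, there is $x(t)$ with $\partial_x u_{p,k}(t,x(t))=0$, so $\sigma_{p,k}(t,x(t))=-a\rho_{p,k}^\gamma(t,x(t))$. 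Integrating $\partial_x\sigma_{p,k}=\rho_{p,k}\dot u_{p,k}$ from $x(t)$ then gives
\[
\|\sigma_{p,k}(t)\|_{L^\infty_x}\le\|\rho_{p,k}\|_{L^1_x}^{1/2}\|\sqrt{\rho_{p,k}}\,\dot u_{p,k}(t)\|_{L^2_x}+a\|\rho_{p,k}\|_{L^\infty_x}^\gamma .
\]
Hence, for $p\ge4$,
\[
\int_0^t\|\sigma_{p,k}\|_{L^\infty_x}\|\partial_x u_{p,k}\|_{L^2_x}^2\,\mathrm{d}s\le\frac12\int_0^t\int_{\Omega_k}\rho_{p,k}|\dot u_{p,k}|^2\,\mathrm{d}x\,\mathrm{d}s+C(k)\int_0^t\int_{\Omega_k}\big(1+|\partial_x u_{p,k}|^p\big)\,\mathrm{d}x\,\mathrm{d}s .
\]
This uses $\|\partial_x u_{p,k}\|_{L^2_x}^4\le|\Omega_k|\int_{\Omega_k}(1+|\partial_x u_{p,k}|^p)\,\mathrm{d}x$ and the density upper bound. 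The first term is absorbed on the left and the second is bounded by the energy. Equivalently, you can keep a corrected sign splitting and bound the compressive part pointwise by $|\sigma_{p,k}|\,|\partial_x u_{p,k}|^2$, which leads to the same estimate.

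The rest of your proposal is correct and matches the paper: the vanishing boundary terms, the pressure term via the renormalised continuity equation, and the Young absorption of $a\int\rho_{p,k}^\gamma\partial_x u_{p,k}(t)$.
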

	\begin{proof}[Sketch proof of Proposition \ref{prop: rho dotu}]
		The proof follows by testing the momentum equation with $\dot{u}_{p,k}$. The main difficulty lies in controlling the integral involving $\sigma_{p,k}$, which in the full expansion (see Appendix A) appears as the term $\mathcal{I}_4$. To handle it, we use the representation
		\begin{equation*}
			\sigma_{p,k}(t,x) = \int_{x(t)}^{x}(\rho_{p,k}\dot{u}_{p,k})(t,z)\,\mathrm{d}z - a\rho_{p,k}^{\gamma}(t,x(t)),
		\end{equation*}
		where $x(t)\in \Omega_k$ is chosen such that $\partial_x u_{p,k}(t,x(t))=0$. The existence of $x(t)$ follows from $\int_{\Omega_k}\partial_x u_{p,k}\,\mathrm{d}x=0$. The bounds from the upper bound of density and the energy estimates then gives the desired control of $\mathcal{I}_4$. We refer to the detailed proof in Appendix A for complete expansion.
	\end{proof}
	
	Finally, following~\cite[Proposition 2.5]{Bresch2026}, we combine the previous two propositions to bound the stress tensor in $L^{2}(0,T;L^{\infty}(\Omega_{k}))$. It shall play a crucial role in the passage of the limit $p \to \infty$.
	
	\begin{proposition}\label{prop: bound tau_pk}
		Let $(\rho_{p,k},u_{p,k})$ be a smooth solution of \eqref{PDE,pk}. Then
		\begin{equation*}
			|\partial_{x}u_{p,k}|^{p-2}\partial_{x}u_{p,k} \in L^2(0,T;L^{\infty}(\Omega_k))\quad \text{uniformly in p}.
		\end{equation*}
	\end{proposition}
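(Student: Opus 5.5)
The plan is to bound $\tau_{p,k}=|\partial_x u_{p,k}|^{p-2}\partial_x u_{p,k}$ through the effective flux $\sigma_{p,k}=\mu\tau_{p,k}-a\rho_{p,k}^\gamma$. Since $\mu\tau_{p,k}=\sigma_{p,k}+a\rho_{p,k}^\gamma$ and $\rho_{p,k}$ is bounded in $L^\infty((0,T)\times\Omega_k)$ uniformly in $p$ by Proposition~\ref{prop: bound for stress and rho}, it suffices to show that $\sigma_{p,k}$ is bounded in $L^2(0,T;L^\infty(\Omega_k))$ uniformly in $p$.

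The momentum equation can be written as $\rho_{p,k}\dot u_{p,k}=\partial_x\sigma_{p,k}$. Fix $t$. As in the sketch of Proposition~\ref{prop: rho dotu}, the Dirichlet condition gives $\int_{\Omega_k}\partial_x u_{p,k}\,\mathrm{d}x=0$, so there is a point $x(t)\in\Omega_k$ with $\partial_x u_{p,k}(t,x(t))=0$. At that point $\tau_{p,k}=0$, hence $\sigma_{p,k}(t,x(t))=-a\rho_{p,k}^\gamma(t,x(t))$. Integrating in $x$ then yields
\begin{equation*}
\sigma_{p,k}(t,x)=-a\rho_{p,k}^\gamma(t,x(t))+\int_{x(t)}^x(\rho_{p,k}\dot u_{p,k})(t,z)\,\mathrm{d}z .
\end{equation*}

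From this representation and Cauchy--Schwarz,
\begin{equation*}
\|\sigma_{p,k}(t)\|_{L^\infty(\Omega_k)}\le a\|\rho_{p,k}\|_{L^\infty}^\gamma+\|\rho_{p,k}\|_{L^\infty}^{1/2}|\Omega_k|^{1/2}\Big(\int_{\Omega_k}\rho_{p,k}|\dot u_{p,k}|^2\,\mathrm{d}x\Big)^{1/2}.
\end{equation*}
We square this and integrate over $(0,T)$. The first term contributes at most $C(T,k)$ by the density upper bound \eqref{es: bound rho_pk}. The second contributes at most $C(k)\int_0^T\!\int_{\Omega_k}\rho_{p,k}|\dot u_{p,k}|^2$, which Proposition~\ref{prop: rho dotu} bounds uniformly in $p$. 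Hence $\sigma_{p,k}$, and therefore $\tau_{p,k}$, is bounded in $L^2(0,T;L^\infty(\Omega_k))$ with a constant $C(T,\gamma,\mu,a,E_0,k)$.

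The analytic content is already contained in Propositions~\ref{prop: bound for stress and rho} and \ref{prop: rho dotu}, so the only step needing care is the choice of the anchor point $x(t)$. In the periodic case one uses the zero mean of $\partial_x u$ over the torus; here it is replaced by the boundary condition $u_{p,k}=0$ on $\partial\Omega_k$. The argument also needs $\partial_x u_{p,k}(t,\cdot)$ to be continuous so that an intermediate-value point exists. This holds for the smooth solutions under consideration. Without continuity, one could instead take $x(t)$ where $\partial_x u_{p,k}$ changes sign, or average the representation over a set where $|\partial_x u_{p,k}|\le 1$, using that $\tau_{p,k}$ is then bounded by $1$.
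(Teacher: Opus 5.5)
Your proposal is correct and is essentially the paper's proof. You anchor at a point $x(t)$ with $\partial_x u_{p,k}(t,x(t))=0$, furnished by the Dirichlet condition, integrate $\partial_x\sigma_{p,k}=\rho_{p,k}\dot u_{p,k}$ from $x(t)$, apply Cauchy--Schwarz, square and integrate in time, and conclude with the density upper bound \eqref{es: bound rho_pk} and Proposition~\ref{prop: rho dotu}.

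As a minor aside, Rolle's theorem applied to $u_{p,k}(t,\cdot)$, which vanishes at both endpoints of $\Omega_k$, gives $x(t)$ directly, so your closing alternatives are unnecessary. The averaging variant could also fail for nonsmooth $\partial_x u_{p,k}$, because $\{|\partial_x u_{p,k}|\le 1\}$ may then be a null set.
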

	\begin{proof}[Sketch proof of Proposition \ref{prop: bound tau_pk}]
		It follows from the integral representation of $\sigma_{p,k}$ in the sketched proof of Proposition~\ref{prop: rho dotu} and the uniform bounds for $\rho_{p,k}$, $\dot{u}_{p,k}$ in  Propositions~\ref{prop: bound for stress and rho} and \ref{prop: rho dotu}.
	\end{proof}
	
	\section{The limit passage as $p\rightarrow\infty$}\label{sec: pass to lim}
	We now establish the convergence of the approximate solutions $(\rho_{p,k},u_{p,k})$ as $p\rightarrow\infty$ and identify the limit system. Throughout this section, $k$ is a fixed sufficiently large integer (as in Theorem \ref{thm: local}), and write $\Omega_k=(-2k-2,2k+2)$.
	\begin{theorem}
		\label{thm: p_limit}
		Under the assumptions of Theorem \ref{thm: local}, there exists a subsequence $\left\{p_j\right\}\subset\left\{p\right\} \rightarrow \infty$ such that, for every $1\leq r<\infty$, one has that
		\begin{align}
			u_{p_j,k} &\rightharpoonup u_k \quad \text{weakly in } L^2(0,T; H^1(\Omega_k))\cap L^r(0,T;W^{1,r}(\Omega_k)),\nonumber\\
			u_{p_j,k} &\longrightarrow u_k \quad \text{strongly in } L^2((0,T)\times\Omega_k),\\
			\tau_{p_j,k} := |\partial_x u_{p_j,k}|^{p_j-2}\partial_x u_{p_j,k} &\rightharpoonup \tau_k \quad \text{weakly in } L^2((0,T)\times\Omega_k),\nonumber\\
			\rho_{p_j,k} &\longrightarrow \rho_k \quad \text{strongly in } C([0,T]; L^r(\Omega_k)).\nonumber
		\end{align}
		Moreover, the limit $(\rho_{k},u_{k})$ satisfies the following system on $\Omega_{k}$:
		\begin{equation}\label{PDE,k}
			\left\{
			\begin{array}{ll}
				\partial_{t}\rho_{k} + \partial_{x}(\rho_{k}u_{k}) = 0,\\
				\partial_{t}(\rho_{k}u_{k}) + \partial_{x}(\rho_{k}u_{k}^{2}) - \mu \partial_{x}\tau_{k} + a\partial_{x}\rho_{k}^{\gamma} = 0,\\
				\tau_{k} = \pi_{k}\partial_{x}u_{k},\quad \pi_{k}\geq 0,\quad \pi_{k}(1 - |\partial_{x}u_{k}|) = 0,\text{ and }\quad |\partial_{x}u_{k}|\leq 1 \quad \text{a.e. in } (0,T)\times \Omega_{k}.
			\end{array}
			\right.
		\end{equation}
	\end{theorem}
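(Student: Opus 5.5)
The argument combines the $p$-uniform bounds of Section~\ref{sec: unif est for truncated} with the Aubin--Lions--Simon lemma (Theorem~\ref{thm:ALS}) and a monotonicity (Minty-type) argument to identify $\tau_k$. Throughout, $k$ is fixed, so constants may depend on $k$.

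\emph{Step 1 (bounds on $u$ and $\tau$).} From the energy identity \eqref{es: energy pk}, $\int_0^T\int_{\Omega_k}|\partial_x u_{p,k}|^p\le E_0/\mu$. For fixed $r<p$, H\"older gives
\[
\|\partial_x u_{p,k}\|_{L^r((0,T)\times\Omega_k)}\le (T|\Omega_k|)^{1/r-1/p}(E_0/\mu)^{1/p}.
\]
This is bounded uniformly in $p$, and the $\limsup$ as $p\to\infty$ is at most $(T|\Omega_k|)^{1/r}$. Together with Poincar\'e (using the Dirichlet condition), we get weak compactness in $L^r(0,T;W^{1,r})$ for every $r$, including $r=2$. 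Weak lower semicontinuity then gives $\|\partial_x u_k\|_{L^r}\le (T|\Omega_k|)^{1/r}$ for all $r$. Letting $r\to\infty$ yields $|\partial_x u_k|\le1$ a.e. A diagonal subsequence handles countably many $r$ at once. Proposition~\ref{prop: bound tau_pk} gives $\tau_{p,k}$ bounded in $L^2(0,T;L^\infty)\subset L^2$, hence a weak limit $\tau_k$.

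\emph{Step 2 (strong compactness).} For $u$, I will use Theorem~\ref{thm:ALS} with $X=H^1_0$, $Y=L^2$, $Z=L^2$. The time translates are controlled by $\partial_t u_{p,k}=\dot u_{p,k}-u_{p,k}\partial_x u_{p,k}$, which is bounded in $L^2$ by Theorem~\ref{thm: local} and the $L^\infty$ bound on $u$ coming from $H^1\subset L^\infty$ in 1D. This gives strong $L^2$ convergence of $u$.

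For $\rho$, the bounds of Proposition~\ref{prop: bound for stress and rho} give $\rho_{p,k}$ bounded in $L^\infty$, and $\partial_t\rho$ is bounded in $L^2(0,T;H^{-1})$, so $\rho_{p,k}\to\rho_k$ in $C([0,T];L^\infty_w)$. Upgrading this to strong convergence is the main obstacle, since there is no spatial regularity for $\rho$. I would follow the 1D effective viscous flux approach of Bresch--Burtea--Szlenk. The key input is that $\sigma_{p,k}=\mu\tau_{p,k}-a\rho_{p,k}^\gamma$ satisfies $\partial_x\sigma_{p,k}=\rho_{p,k}\dot u_{p,k}$, which is bounded in $L^2$; hence $\sigma_{p,k}$ is compact in space.

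The steps are as follows. Pass to the limit in the renormalized equation for $\rho\log\rho$ (or $\rho^2$), compare with the limit equation for $\rho_k$, and use weak-$\times$-strong convergence of $\sigma\rho$ (a div-curl/compensated-compactness type argument in $(t,x)$). This yields
\[
\partial_t(\overline{\rho\log\rho}-\rho_k\log\rho_k)+\partial_x(\cdots)=\frac{1}{\mu}\bigl(\overline{\rho\,\sigma}-\rho_k\overline{\sigma}\bigr)+\cdots.
\]
The delicate term involves $\overline{\rho\tau}$ versus $\rho_k\tau_k$. I expect to control it by the sign $\overline{\rho^{\gamma+1}}\ge\rho_k\overline{\rho^\gamma}$ together with the fact that $\tau_{p,k}\partial_x u_{p,k}=|\partial_xu_{p,k}|^p\ge0$. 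The defect measure then vanishes because it is zero initially, which gives strong convergence. Strong convergence in $C([0,T];L^r)$ then follows from uniform $L^\infty$ bounds and equicontinuity.

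\emph{Step 3 (limit equations and constraint).} With strong convergence of $\rho$ and $u$, all nonlinear terms $\rho u$, $\rho u^2$ and $\rho^\gamma$ converge, and the weak formulations pass to the limit; the Dirichlet condition is kept since $H^1_0$ is weakly closed. To identify $\tau_k$, I will show $\tau_k\partial_x u_k=|\tau_k|$ a.e. and set $\pi_k:=|\tau_k|$.

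First, the pointwise inequality $\tau_{p}\,w\le \frac{p-1}{p}|\tau_p|^{p'}+\frac1p|w|^p$ for $|w|\le1$ gives the inequality $\int\tau_k(w-\partial_xu_k)\le0$ in the limit. To obtain it, I will use the energy identity and the limit energy identity to show $\limsup\int\tau_{p,k}\partial_xu_{p,k}\le\int\tau_k\partial_xu_k$; this is the standard Minty trick.

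Second, $\int|\tau_{p}|\le\int\tau_p\partial_xu_p+\int_{\{|\partial_xu_p|<1\}}|\tau_p|$, and the last term tends to $0$ because $|\tau_p|\le|\partial_xu_p|^{p-1}$ there.

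Taking $w=\operatorname{sgn}(\tau_k)\mathbb{I}$ in the variational inequality gives $\int|\tau_k|\le\int\tau_k\partial_xu_k$, while the reverse inequality follows from $|\partial_x u_k|\le1$. Hence $\tau_k=\pi_k\partial_xu_k$ with $\pi_k\ge0$ supported where $|\partial_xu_k|=1$.
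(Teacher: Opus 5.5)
Your treatment of $u$ is sound. This includes the $r\to\infty$ argument for $|\partial_xu_k|\le1$, which is a fine substitute for the paper's Chebyshev argument. Your variational-inequality identification of $\tau_k$ is also sound, but only \emph{once} $\rho_{p,k}\to\rho_k$ strongly. The step meant to give that strong convergence has a genuine gap.

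In the renormalized equation $\partial_t(\rho\log\rho)+\partial_x(\rho\log\rho\,u)+\rho\,\partial_xu=0$ the source is $\rho\,\partial_xu$. So you need $\overline{\rho\,\partial_xu}\ge\rho_k\,\partial_xu_k$, with bars denoting weak limits. Your displayed defect equation, with right-hand side $\frac1\mu(\overline{\rho\sigma}-\rho_k\overline{\sigma})$, is the Newtonian one, valid only when $\mu\,\partial_xu=\sigma+a\rho^\gamma$. Here $\mu\tau_{p,k}=\sigma_{p,k}+a\rho_{p,k}^\gamma$, but $\partial_xu_{p,k}=|\tau_{p,k}|^{p'-2}\tau_{p,k}$, a nonlinearity that degenerates to the multivalued sign graph as $p\to\infty$.

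The identity $\overline{\rho\sigma}=\rho_k\overline{\sigma}$ together with $\overline{\rho^{\gamma+1}}\ge\rho_k\overline{\rho^\gamma}$ gives only $\overline{\rho\tau}\ge\rho_k\tau_k$. That inequality does not transfer through the nonlinearity. If you use monotonicity in $\rho$ at frozen $\sigma$, you are left with $\int(\rho_{p,k}-\rho_k)\,|\tilde\tau_p|^{p'-2}\tilde\tau_p$, where $\tilde\tau_p=(\sigma_{p,k}+a\rho_k^\gamma)/\mu$. Killing this term would require spatial compactness of a function of $\rho_k$, which has no regularity, composed with a family of maps that is not equicontinuous as $p\to\infty$. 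Compactness of $\sigma_{p,k}$ in $x$ alone does not provide this, and $\tau_{p,k}\partial_xu_{p,k}\ge0$ plays no role.

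So the claim that the defect vanishes is unproved. Your Step~3 inherits the gap, because the limit energy identity you use there has pressure $a\rho_k^\gamma$ rather than $a\overline{\rho^\gamma}$.

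The missing idea is the one in the paper's Step~3. It exploits $|\partial_xu_k|\le1$ and needs no effective viscous flux. Test the $p$-momentum equation with $u_{p,k}-u_k$; this is admissible since $u_k\in L^2(0,T;H^1_0(\Omega_k))$. Combine it with the $\rho^\gamma$-renormalized limit continuity equation, which holds because $\rho_{p,k}u_{p,k}\rightharpoonup\rho_ku_k$. Then use monotonicity:
\[
\tau_{p,k}\bigl(\partial_xu_{p,k}-\partial_xu_k\bigr)\ge|\partial_xu_k|^{p-2}\partial_xu_k\bigl(\partial_xu_{p,k}-\partial_xu_k\bigr).
\]
Because $|\partial_xu_k|\le1$, the factor $|\partial_xu_k|^{p-2}\partial_xu_k$ is bounded and converges a.e., hence strongly in $L^2$. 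Its pairing with $\partial_xu_{p,k}-\partial_xu_k\rightharpoonup0$ is therefore $o(1)$. The term $\int\rho_{p,k}\dot u_{p,k}(u_{p,k}-u_k)$ is also $o(1)$, by strong convergence of $u$.

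What remains is $X_p(t)\le(\gamma-1)\int_0^tX_p(s)\,\mathrm{d}s+o(1)$ for the relative entropy $X_p=\int_{\Omega_k}\bigl(\rho_{p,k}^\gamma-\rho_k^\gamma-\gamma\rho_k^{\gamma-1}(\rho_{p,k}-\rho_k)\bigr)\,\mathrm{d}x$. The cross term is controlled once more by $|\partial_xu_k|\le1$. Gr\"onwall then gives strong convergence of $\rho_{p,k}$. After that, your Step~3, or the paper's lower-semicontinuity comparison $\int|\tau_k|\le\liminf\int|\partial_xu_{p,k}|^p$, completes the identification of $\tau_k$.

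Separately, your auxiliary claim that $\int_{\{|\partial_xu_{p,k}|<1\}}|\tau_{p,k}|\to0$ does not follow from $|\tau_{p,k}|\le|\partial_xu_{p,k}|^{p-1}$; for example $(1-\frac1p)^{p-1}\to e^{-1}$. You never use it, so simply drop it.
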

	\begin{proof}
		We divide our proof in the seven steps below.
		
		\smallskip
		\noindent
		{\bf Step~1: Uniform estimates for $u_{p,k}$.}
	From the energy estimates in Proposition \ref{prop: energy}, there exists a constant $C(E_0)>0$ independent of $p$ such that
	\begin{equation*}
		\int_0^T \int_{\Omega_k} |\partial_x u_{p,k}|^p \,\mathrm{d}x \,\mathrm{d}t\leq C(E_0).
	\end{equation*}
	For any fixed $r<\infty$ and $p\geq r$, H\"{o}lder's inequality gives us
	\begin{align*}
		\left(\int_0^T \left\|\partial_x u_{p,k}\right\|_{L_x^r}^r \,\mathrm{d}t\right)^{\frac{1}{r}} \leq& \left(|\Omega_k|\times T\right)^{\frac{1}{r}-\frac{1}{p}}\left(\int_0^T \left\|\partial_x u_{p,k}\right\|_{L_x^p}^p\right)^{\frac{1}{p}}\\
		\leq & \left(|\Omega_k|\times T\right)^{\frac{1}{r}}\left(\frac{C(E_0)}{|\Omega_k|\times |T|}\right)^{\frac{1}{p}}\\
		\leq &\max\left\{ \left(|\Omega_k|\times T\right)^{\frac{1}{r}},C(E_0)^{\frac{1}{r}}\right\}.
	\end{align*}
	Thus, there exists a subsequence, still denoted by $u_{p,k}$, such that
	\begin{equation}\label{wlimit ux}
		\partial_{x}u_{p,k}\rightharpoonup \partial_{x}u_k\quad\text{in } L^r((0,T)\times \Omega_k).
	\end{equation}
	On the other hand, for any $p\geq 2$, we have
	\begin{align}\label{eq: u_x L^2}
		\int_0^T\int_{\Omega_k}\left|\partial_x u_{p,k}\right|^2\,\mathrm{d}x\,\mathrm{d}t\leq& \int_0^T \int_{\Omega_k}\left(1+\left|\partial_x u_{p,k}\right|^p\right) \mathrm{d}x\,\mathrm{d}t \nonumber\\
		\leq &C(T,E_0,k).
	\end{align}
	By \eqref{es: bound rho_pk}, $\rho_{p,k}$ is bounded and has a positive lower bound depending on $k$. Thus, from the bounds
	$\|\sqrt{\rho_{p,k}} u_{p,k}\|_{L^\infty(0,T;L^2(\Omega_k))}\leq E_0$ and $\|\sqrt{\rho_{p,k}}\dot u_{p,k}\|_{L^\infty(0,T;L^2(\Omega_k))}\leq C$ in Propositions \ref{prop: energy} and \ref{prop: rho dotu}, we infer
	\begin{equation}\label{es: u dotu}
		\|u_{p,k}\|_{L^\infty(0,T;L^2(\Omega_k))}+\|\dot u_{p,k}\|_{L^\infty(0,T;L^2(\Omega_k))}\leq C(E_0,k).
	\end{equation}
	Next we estimate $u_{p,k}\partial_x u_{p,k}$ in $L^2(0,T;L^2(\Omega_k))$. Using the one-dimensional Sobolev embedding $H^1(\Omega_k)\hookrightarrow L^\infty(\Omega_k)$,
	\begin{align}\label{eq: uu_x}
		\int_0^T\left\|u_{p,k}\partial_x u_{p,k}\right\|_{L_x^2}^2\,\mathrm{d}t= &\int_0^T \int_{\Omega_k} |u_{p,k}|^2 |\partial_x u_{p,k}|^2\,\mathrm{d}x \,\mathrm{d}t\nonumber\\
		\leq & \int_0^T\left\| u_{p,k}\right\|_{L_x^{\infty}}^2 \left\|\partial_x u_{p,k}\right\|_{L_x^{2}}^2\,\mathrm{d}t \nonumber\\
		\leq & \int_0^T \left\|u_{p,k}\right\|_{H_x^1}^2\left\|\partial_x u_{p,k}\right\|_{L_x^{2}}^2\,\mathrm{d}t\\
		\leq &\int_0^T\left(\left\|u_{p,k}\right\|_{L_x^2}^2+\left\|\partial_x u_{p,k}\right\|_{L_x^2}^2\right)\left\|\partial_x u_{p,k}\right\|_{L_x^{2}}^2\,\mathrm{d}t.\nonumber
	\end{align}
	From \eqref{es: u dotu}, $\|u_{p,k}\|_{L^2}^2$ is bounded uniformly in $t$, so the first part of \eqref{eq: uu_x} is controlled by
	\begin{align*}
		\int_0^T \left\|u_{p,k}\right\|_{L_x^2}^2 \left\|\partial_x u_{p,k}\right\|_{L_x^{2}}^2\,\mathrm{d}t\leq& \sup_{t\in(0,T)}\left\|u_{p,k}\right\|_{L_x^2}^2\int_0^T\left\|\partial_x u_{p,k}\right\|_{L_x^{2}}^2\,\mathrm{d}t\\
		\leq &C(T,E_0,k).
	\end{align*}
	For the second term, for sufficiently large $p$ with $1-\frac{4}{p}\geq 0$, H\"{o}lder inequality gives
	\begin{align*}
		\int_0^T \left\|\partial_x u_{p,k}\right\|_{L_x^{2}}^4\,\mathrm{d}t \leq& \int_0^T \left|\Omega_k\right|^{2-\frac{2}{p}}\left\|\partial_x u_{p,k}\right\|_{L_x^p}^4\,\mathrm{d}t\\
		\leq &\left|\Omega_k\right|^{2-\frac{2}{p}}T^{1-\frac{4}{p}}\left(\int_0^T \left\|\partial_x u_{p,k}\right\|_{L_x^p}^p\,\mathrm{d}t\right)^{\frac{4}{p}}\\
		\leq &|\Omega_k|^{2-\frac{2}{p}}T^{1-\frac{4}{p}}\left(\int_0^T \left\|\partial_x u_{p,k}\right\|_{L_x^p}^p\,\mathrm{d}t\right)^{\frac{4}{p}}\\
		\leq &C(T,E_0,k).
	\end{align*}
	Therefore, $u_{p,k}\partial_x u_{p,k}\in L^2(0,T;L^2(\Omega_k))$ uniformly in $p$.
	
	Finally, from $\partial_t u_{p,k}=\dot{u}_{p,k}-u_{p,k}\partial_x u_{p,k}$, we conclude that
	\begin{equation*}
		\partial_t u_{p,k}\in L^2(0,T;L^2(\Omega_k))\subset L^2(0,T;H^{-1}(\Omega_k)).
	\end{equation*}
	We have also shown that
	\begin{equation*}
		u_{p,k}\in L^2(0,T;H^1(\Omega_k)).
	\end{equation*}
    Thus, in view of the compact embedding $H^{1}(\Omega_{k})\hookrightarrow\hookrightarrow L^{2}(\Omega_{k})$, the compactness lemma of Aubin--Lions--Simon (Theorem \ref{thm:ALS}) yields that $\{u_{p,k}\}_{p \in \mathbb{N}}$ is relatively compact in $L^{2}(0,T;L^{2}(\Omega_{k}))$. Hence, up to a subsequence (unrelabelled),
	\begin{equation*}
		u_{p,k}\longrightarrow u_{k} \quad\text{strongly in}\quad  L^2(0,T;L^2(\Omega_k)).
	\end{equation*}   
	
	\smallskip
	\noindent
	{\bf Step~2: Limit for $\partial_{x}u_{k}$.} We again argue as in \cite{Bresch2026}. For any $\eta >0$, define
	\begin{equation*}
		E_{\eta}:=\left\{(t,x):\left|\partial_x u_{k}\right|\geq 1+\eta\right\}.
	\end{equation*} 
	By Chebyshev's inequality and H\"{o}lder's inequality
	\begin{align*}
		&(1+\eta)|E_{\eta}|\leq \int_{0}^{T}\int_{\Omega_k}\left|\partial_x u_{k}\right|\mathbb{I}_{E_{\eta}}\,\mathrm{d}x\,\mathrm{d}t\\
		\leq &\liminf_{p\rightarrow \infty}\int_{0}^{T}\int_{\Omega_k}\left|\partial_x u_{p,k}\right|\mathbb{I}_{E_{\eta}}\,\mathrm{d}x\,\mathrm{d}t\\
		\leq &\liminf_{p\rightarrow \infty}\left(\int_{0}^{T}\int_{\Omega_k}\left|\partial_x u_{p,k}\right|^p \,\mathrm{d}x \,\mathrm{d}t\right)^{\frac{1}{p}}|E_{\eta}|^{1-\frac{1}{p}}\\
		\leq & \liminf_{p\rightarrow \infty}C(E_{0})^{\frac{1}{p}}|E_{\eta}|^{1-\frac{1}{p}},
	\end{align*}
	where in the last line we used the uniform bound $\int_0^T\int_{\Omega_k}|\partial_x u_{p,k}|^p \,\mathrm{d}x \,\mathrm{d}t\leq C(E_0)$ from Proposition \ref{prop: energy}. Since $C(E_0)^{\frac{1}{p}}\rightarrow 1$ and $|E_{\eta}|^{1-\frac{1}{p}}\rightarrow |E_{\eta}|$ as $p\rightarrow\infty$, we obtain
	\begin{equation*}
		\left(1+\eta\right)|E_{\eta}|\leq \liminf_{p\rightarrow \infty}|E_{\eta}|^{1-\frac{1}{p}}=|E_{\eta}|,
	\end{equation*}
	which is impossible unless $|E_{\eta}|=0$ for every $\eta>0$. Thus, $|\partial_x u_k|\leq 1$ \emph{a.e.} in $(0,T)\times \Omega_k$.
	
	\smallskip
	\noindent
	{\bf Step~3: Compactness of $\rho_{p,k}$.}
	The continuity equation gives us  $\partial_{t}\rho_{p,k} = -\partial_{x}(\rho_{p,k}u_{p,k})$. Using the uniform boundedness of $\rho_{p,k}$ and the energy estimates, we have
	\begin{align*}
		|\langle\partial_t \rho_{p,k},\varphi\rangle| 
		&= \left|\int_{\Omega_k} \rho_{p,k}u_{p,k} \partial_x\varphi\,\mathrm{d}x\right| \\
		&\leq \|\rho_{p,k}\|_{L_x^\infty(\Omega_k)}^{1/2} \|\rho_{p,k}^{1/2}u_{p,k}\|_{L_x^2(\Omega_k)} \|\partial_x\varphi\|_{L_x^2(\Omega_k)} \\
		&\leq C(E_0) \|\varphi\|_{H^1},
	\end{align*}
	for any test function $\phi\in H_0^1(\Omega_k)$.
	Thus, $\partial_t \rho_{p,k}\in L^\infty(0,T;H^{-1}(\Omega_k))$ uniformly in $p$. 
	Moreover, $\rho_{p,k}$ is uniformly bounded in $L^\infty(0,T;L^\infty(\Omega_k))$, hence in $L^\infty(0,T;L^r(\Omega_k))$ for any $r<\infty$.
	
	Since the embedding $L^r(\Omega_k)\hookrightarrow\hookrightarrow H^{-1}(\Omega_k)$ is compact, by Theorem~\ref{thm:ALS}
	with $X=L^r(\Omega_k)$, $Y=H^{-1}(\Omega_k)$ and $p=\infty$, we obtain a subsequence (still denoted $\rho_{p,k}$) such that
	\begin{equation*}
		\rho_{p,k}\longrightarrow\rho_k \quad\text{strongly in } C([0,T];H^{-1}(\Omega_k)).
	\end{equation*}
	On the other hand, recall the momentum equation
	\begin{equation*}
		\partial_t(\rho_{p,k}u_{p,k}) + \partial_x(\rho_{p,k}u_{p,k}^2) - \mu\partial_x(|\partial_x u_{p,k}|^{p-2}\partial_x u_{p,k}) + a\partial_x\rho_{p,k}^\gamma = 0.
	\end{equation*}
	Testing this equation with $u_{p,k}$ and with $u_k$, we deduce that
	\begin{align*}
		\frac{a}{\gamma-1}\int_{\Omega_k}\rho_{p,k}^\gamma(t,x)\,\mathrm{d}x
		&+\int_0^t\int_{\Omega_k}\left(\rho_{p,k}\dot{u}_{p,k}\right)u_{p,k}\,\mathrm{d}x \,\mathrm{d}s
		+\int_0^t\int_{\Omega_k}|\partial_x u_{p,k}|^p \,\mathrm{d}x \,\mathrm{d}s\\
		&=\frac{a}{\gamma-1}\int_{\Omega_k}\rho_{p,k,0}^\gamma(x)\,\mathrm{d}x,
	\end{align*}
	and
	\begin{align*}
		\frac{a}{\gamma-1}\int_{\Omega_k}\rho_k^\gamma(t,x)\,\mathrm{d}x
		&-a\int_0^t\int_{\Omega_k}\left(\rho_{p,k}^\gamma-\rho_k^\gamma\right)\partial_x u_k \,\mathrm{d}x \,\mathrm{d}s
		+\int_0^t\int_{\Omega_k}\left(\rho_{p,k}\dot{u}_{p,k}\right)u_k \,\mathrm{d}x \,\mathrm{d}s\\
		&+\int_0^t\int_{\Omega_k}|\partial_x u_{p,k}|^{p-2}\partial_x u_{p,k}\,\partial_x u_k \,\mathrm{d}x \,\mathrm{d}s
		=\frac{a}{\gamma-1}\int_{\Omega_k}\rho_{k,0}^\gamma(x) \,\mathrm{d}x.
	\end{align*}
	Subtracting the two identities yields that
	\begin{align*}
		&\frac{a}{\gamma-1}\int_{\Omega_k}\left(\rho_{p,k}^\gamma-\rho_k^\gamma\right)(t)\,\mathrm{d}x
		+ a\int_0^t\int_{\Omega_k}\left(\rho_{p,k}^\gamma-\rho_k^\gamma \right)\partial_x u_k \,\mathrm{d}x\,\mathrm{d}s
		\\&+ \int_0^t\int_{\Omega_k}\rho_{p,k}\dot{u}_{p,k}(u_{p,k}-u_k)\,\mathrm{d}x\,\mathrm{d}s
		+ \int_0^t\int_{\Omega_k}|\partial_x u_{p,k}|^{p-2}\partial_x u_{p,k}\left(\partial_x u_{p,k}-\partial_x u_k\right)\,\mathrm{d}x\,\mathrm{d}s\\ 
		&=\frac{a}{\gamma-1}\int_{\Omega_k}\left(\rho_{p,k,0}^\gamma-\rho_{k,0}^\gamma\right)(x)\,\mathrm{d}x.
	\end{align*}
	To proceed, by monotonicity of the function $s\mapsto |s|^{p-2}s$, one has that
	\begin{equation*}
		\left(|\partial_x u_{p,k}|^{p-2}\partial_x u_{p,k}-|\partial_x u_k|^{p-2}\partial_x u_k\right)\left(\partial_x u_{p,k}-\partial_x u_k\right)\geq 0.
	\end{equation*}
	It follows that
	\begin{equation*}
		|\partial_x u_{p,k}|^{p-2}\partial_x u_{p,k}\left(\partial_x u_{p,k}-\partial_x u_k\right)
		\geq |\partial_x u_k|^{p-2}\partial_x u_k\left(\partial_x u_{p,k}-\partial_x u_k\right),
	\end{equation*}
	and hence
	\begin{align*}
		&\frac{a}{\gamma-1}\int_{\Omega_k}\left(\rho_{p,k}^\gamma-\rho_k^\gamma\right)(t)\,\mathrm{d}x
		+ a\int_0^t\int_{\Omega_k}\left(\rho_{p,k}^\gamma-\rho_k^\gamma \right)\partial_x u_k \,\mathrm{d}x\,\mathrm{d}s
		\\&+ \int_0^t\int_{\Omega_k}\rho_{p,k}\dot{u}_{p,k}(u_{p,k}-u_k)\,\mathrm{d}x\,\mathrm{d}s
		+ \int_0^t\int_{\Omega_k}|\partial_x u_k|^{p-2}\partial_x u_k\left(\partial_x u_{p,k}-\partial_x u_k\right)\,\mathrm{d}x\,\mathrm{d}s\\ 
		&\leq \frac{a}{\gamma-1}\int_{\Omega_k}\left(\rho_{p,k,0}^\gamma-\rho_{k,0}^\gamma\right)(x)\,\mathrm{d}x.
	\end{align*}
	Recall from \eqref{wlimit ux} that $\partial_x u_{p,k}\rightharpoonup\partial_x u_k$ in $L^2(0,T;L^2(\Omega_k))$ and $|\partial_x u_k|\leq 1$. It then holds that
	\begin{align*}
		\int_0^t\int_{\Omega_k}|\partial_x u_k|^{p-2}\partial_x u_k\left(\partial_x u_{p,k}-\partial_x u_k\right)\,\mathrm{d}x \,\mathrm{d}s\longrightarrow 0.
	\end{align*}
	Thus, by the strong convergence $u_{p,k}\to u_{k}$ in $L^{2}(0,T;L^{2}(\Omega_{k}))$ and the boundedness of  $\sqrt{\rho_{p,k}}|\dot{u}_{p,k}|$ in $L^{2}((0,T)\times \Omega_{k})$ established in Proposition \ref{prop: rho dotu}, we have that
	\begin{align*}
		&\int_0^t\int_{\Omega_k}\rho_{p,k}\dot{u}_{p,k}(u_{p,k}-u_k)\,\mathrm{d}x\,\mathrm{d}s\\
		\leq& \|\rho_{p,k}\|_{L^{\infty}}^{\frac{1}{2}}\|\sqrt{\rho_{p,k}}|\dot{u}_{p,k}|\|_{L^2((0,T)\times \Omega_k)}\|u_{p,k}-u_k\|_{L^2((0,T)\times \Omega_k)}\longrightarrow 0.
	\end{align*}
	Moreover, the strong convergence $\rho_{p,k,0}\rightarrow \rho_{k,0}$ implies that
	\begin{equation*}
		\int_{\Omega_k}\left(\rho_{p,k,0}^\gamma-\rho_{k,0}^\gamma\right)(x)\,\mathrm{d}x\rightarrow 0.
	\end{equation*}
	Therefore, for \emph{a.e.} $t\in (0,T)$,
	\begin{equation}
		\label{xx}
		\frac{a}{\gamma-1}\int_{\Omega_k}\left(\rho_{p,k}^\gamma(t)-\rho_k^\gamma(t)\right)\,\mathrm{d}x
		+a\int_0^t\int_{\Omega_k}\left(\rho_{p,k}^\gamma-\rho_k^\gamma\right)\partial_x u_k\,\mathrm{d}x\,\mathrm{d}s \leq \varepsilon_p(t),
	\end{equation}
	for some $\varepsilon_p(t)\rightarrow 0$ as $p\rightarrow \infty$.
	
	In addition, let us define
	\begin{equation*}
		X_{p}(t)=\int_{\Omega_k}\left(\rho_{p,k}^\gamma-\rho_k^\gamma-\gamma \rho_k^{\gamma-1}(\rho_{p,k}-\rho_k)\right)(t,x)\,\mathrm{d}x.
	\end{equation*}
	Then we have the identities
	\begin{equation*}
		\int_{\Omega_k}\left(\rho_{p,k}^\gamma(t)-\rho_k^\gamma(t)\right)\,\mathrm{d}x=X_p(t)+\gamma\int_{\Omega_k}\left( \rho_k^{\gamma-1}(\rho_{p,k}-\rho_k)\right)(t,x)\,\mathrm{d}x
	\end{equation*}
	and
	\begin{align*}
		\int_0^t\int_{\Omega_k}\left(\rho_{p,k}^\gamma-\rho_k^\gamma \right)\partial_x u_k \,\mathrm{d}x\,\mathrm{d}s=&\int_0^t\int_{\Omega_k}\left(\rho_{p,k}^\gamma-\rho_k^\gamma-\gamma \rho_k^{\gamma-1}(\rho_{p,k}-\rho_k) \right)\partial_x u_k \,\mathrm{d}x\,\mathrm{d}s\\
		&+\int_0^t\int_{\Omega_k}\left(\gamma \rho_k^{\gamma-1}(\rho_{p,k}-\rho_k) \right)\partial_x u_k \,\mathrm{d}x\,\mathrm{d}s.
	\end{align*}
	Together with the pointwise bound $|\partial_x u_k|\leq 1$ and estimate \eqref{xx} just obtained, we deduce that
	\begin{align*}
		&\frac{a}{\gamma-1}X_p(t)+\frac{a\gamma}{\gamma-1}\int_{\Omega_k}\left( \rho_k^{\gamma-1}(\rho_{p,k}-\rho_k)\right)(t,x)\,\mathrm{d}x\\
		+&a\int_0^t\int_{\Omega_k}\left(\rho_{p,k}^\gamma-\rho_k^\gamma-\gamma \rho_k^{\gamma-1}(\rho_{p,k}-\rho_k) \right)\partial_x u_k \,\mathrm{d}x\,\mathrm{d}s\\
		+&a\int_0^t\int_{\Omega_k}\left(\gamma \rho_k^{\gamma-1}(\rho_{p,k}-\rho_k) \right)\partial_x u_k \,\mathrm{d}x\,\mathrm{d}s\leq \varepsilon_p(t).
	\end{align*}
	The strong convergence in $C([0,T];H^{-1}(\Omega_k))$ implies
	\begin{equation*}
		\rho_{p,k}(t)\longrightarrow\rho_k(t)\quad\text{in } H^{-1}(\Omega_k)\ \text{for every }t\in[0,T].
	\end{equation*}

	Since $\rho_{p,k}$ is uniformly bounded in $L^\gamma(\Omega_k)$, for each $t$ there exists a subsequence such that
	\begin{equation*}
	    \rho_{p,k}(t) \rightharpoonup \eta(t) \quad \text{weakly in } L^\gamma(\Omega_k),
	\end{equation*}
	for some $\eta(t)\in L^\gamma(\Omega_k)$. Since the embedding $L^\gamma(\Omega_k)\hookrightarrow H^{-1}(\Omega_k)$ is continuous, the weak convergence in $L^\gamma$ implies weak convergence in $H^{-1}$ to the same limit $\eta(t)$. Since strong convergence in $H^{-1}$ implies weak convergence in $H^{-1}$ to $\rho_k(t)$, by uniqueness of the weak limit in $H^{-1}$, we must have $\eta(t)=\rho_k(t)$. Hence, for every $t\in[0,T]$,
	\begin{equation*}
		\rho_{p,k}(t)\rightharpoonup\rho_k(t)\quad \text{weakly in}\quad  L^\gamma(\Omega_k).
	\end{equation*}
	As $\rho_k^{\gamma-1}\in L^{\gamma^{\prime}}(\Omega_k)$, where $\gamma^{\prime}$ is the conjugate index of $\gamma$, the weak convergence of $\rho_{p,k}$ shows that
	\begin{equation*}
		\int_{\Omega_k}\rho_k^{\gamma-1}(\rho_{p,k}-\rho_k)\,\mathrm{d}x \longrightarrow 0\quad\text{as} \quad p\rightarrow \infty.
	\end{equation*}
	Similarly, since $|\partial_x u_k|\leq 1$, the same weak convergence yields
	\begin{equation*}
		\int_{\Omega_k}\rho_k^{\gamma-1}(\rho_{p,k}-\rho_k)\partial_x u_k \,\mathrm{d}x \longrightarrow 0\quad\text{as} \quad p\rightarrow \infty.
	\end{equation*}
	Therefore, $X_p(t)$ satisfies the simplified inequality
	\begin{equation*}
		X_p(t) \leq \int_0^t X_p(s)\,\mathrm{d}s+\varepsilon_p(t).
	\end{equation*}
	Gr\"{o}nwall's lemma then yields $X_p(t)\rightarrow 0$ for almost every $t$.
	Consequently,
	\begin{equation*}
		\int_{\Omega_k}\rho_{p,k}^\gamma\,\mathrm{d}x \longrightarrow \int_{\Omega_k}\rho_k^\gamma\,\mathrm{d}x \qquad\text{for a.e. }t\in[0,T].
	\end{equation*}
	This together with the weak convergence gives
	\begin{equation*}
		\rho_{p,k}(t)\longrightarrow\rho_k(t)\quad\text{strongly in } L^\gamma(\Omega_k)\ \text{for almost every }t\in[0,T].
	\end{equation*}
	Then we extend the convergence to any $1\leq r<\infty$. Indeed, for \emph{a.e.} $t$, if $r\geq\gamma$, interpolation inequality gives
	\begin{equation*}
		\|\rho_{p,k}(t)-\rho_k(t)\|_{L_x^r(\Omega_k)}\leq \|\rho_{p,k}(t)-\rho_k(t)\|_{L_x^{\gamma}(\Omega_k)}^{\gamma/r}\|\rho_{p,k}(t)-\rho_k(t)\|_{L_x^{\infty}(\Omega_k)}^{1-\gamma/r}\longrightarrow 0,\quad \text{as }p\rightarrow \infty.
	\end{equation*}
	On the other hand, for $r<\gamma$, H\"{o}lder's inequality yields
	\begin{equation*}
		\|\rho_{p,k}(t)-\rho_k(t)\|_{L_x^r(\Omega_k)}\leq |\Omega_k|^{1/r-1/\gamma}\|\rho_{p,k}(t)-\rho_k(t)\|_{L_x^{\gamma}(\Omega_k)}\rightarrow 0,\quad \text{as }p\rightarrow \infty.
	\end{equation*}
	By the Lebesgue dominated convergence theorem, 
	\begin{equation*}
		\int_0^T \|\rho_{p,k}(t)-\rho_k(t)\|_{L^r}^r\,\mathrm{d}t \rightarrow 0,\quad \text{as }p\rightarrow \infty.
	\end{equation*}
	Hence, for any $1\leq r<\infty$,
	\begin{equation*}
		\rho_{p,k}\longrightarrow \rho_k \quad \text{strongly in}\quad  L^r((0,T)\times \Omega_k).
	\end{equation*}
	
	\smallskip
	\noindent
	{\bf Step~4: Passage to the limit.}
	Now, define 
	\begin{equation*}
		\tau_{p,k}:=|\partial_x u_{p,k}|^{p-2}\partial_x u_{p,k}.
	\end{equation*}
	By Proposition \ref{prop: bound tau_pk}, this sequence is bounded in $L^2(0,T;L^{\infty}(\Omega_k))$ uniformly in $p$. Thus,
	\begin{align*}
		\left\|\tau_{p,k}\right\|_{L^2((0,T)\times \Omega_k)}\leq & \left(\int_0^T\left\|\tau_{p,k}\right\|_{L_x^{\infty}(\Omega_k)}^2\left|\Omega_k\right|\,\mathrm{d}t\right)^{\frac{1}{2}}\\
		\leq &\left|\Omega_k\right|^{\frac{1}{2}}\left\|\tau_{p,k}\right\|_{L^2(0,T;L^\infty(\Omega_k))}
		\leq C(T,\mu,E_0,k)|\Omega_k|^{1/2},
	\end{align*}
	so that $\left\{\tau_{p,k}\right\}$ is uniformly bounded in $L^2((0,T)\times \Omega_k)$. We can extract a subsequence (not relabelled) and a limit $\tau_k\in L^2((0,T)\times \Omega_k)$ such that
	\begin{equation*}
		\tau_{p,k} \rightharpoonup \tau_k \quad\text{weakly in } L^2((0,T)\times \Omega_k).
	\end{equation*}
	In particular, $		\tau_{p,k} \rightharpoonup \tau_k$ weakly in $L^1((0,T)\times \Omega_k)$.

	\smallskip
	\noindent
	{\bf Step~5: Identify the limiting PDE.} We now show that the limiting functions $(\rho_k,u_k,\tau_k)$ satisfy the continuity and momentum equations in the distributional sense.
	
	For each $p$, the approximate solution satisfies the continuity equation; \emph{i.e.}, for any test function $\phi \in C_c^\infty((0,T)\times \Omega_k)$, one has 
	\begin{equation}\label{eq: test continuity}
		\int_0^T\int_{\Omega_k}\rho_{p,k}\,\partial_t\varphi \,\mathrm{d}x\,\mathrm{d}t+\int_0^T\int_{\Omega_k} \rho_{p,k}u_{p,k}\,\partial_x\varphi \,\mathrm{d}x\,\mathrm{d}t=0.
	\end{equation}
	From the strong convergences $u_{p,k}\rightarrow u_k$ in $L^2((0,T)\times\Omega_k)$ and $\rho_{p,k}\rightarrow\rho_k$ in $L^r((0,T)\times\Omega_k)$ for any $1\leq r<\infty$, together with the uniform boundedness of $\rho_{p,k}$ \eqref{es: bound rho_pk}, one infers that
	\begin{equation*}
		\rho_{p,k}u_{p,k} \longrightarrow \rho_k u_k \quad \text{strongly in } L^1((0,T)\times\Omega_k).
	\end{equation*}
	Thus, passing to the limit $p\rightarrow\infty$ in \eqref{eq: test continuity}, we arrive at
	\begin{equation*}
		\partial_t\rho_k+\partial_x(\rho_k u_k)=0 \quad \text{in } \mathcal{D}^{\prime}((0,T)\times\Omega_k). 
	\end{equation*}
	Next, the approximate momentum equation reads
	\begin{align*}
		&\int_0^T\int_{\Omega_k} \rho_{p,k}u_{p,k}\,\partial_t\psi \,\mathrm{d}x\,\mathrm{d}t
		+\int_0^T\int_{\Omega_k} \rho_{p,k}u_{p,k}^2\,\partial_x\psi\,\mathrm{d}x\,\mathrm{d}t\\
		&+\mu\int_0^T\int_{\Omega_k} \tau_{p,k}\,\partial_x\psi \,\mathrm{d}x\,\mathrm{d}t
		+a\int_0^T\int_{\Omega_k}\rho_{p,k}^\gamma\,\partial_x\psi \,\mathrm{d}x\,\mathrm{d}t=0,
	\end{align*}
	for any test function $\psi \in C_c^\infty((0,T)\times\Omega_k)$,
	We examine each term as $p\rightarrow\infty$.
	\begin{itemize}
		\item  For the first term, since $\rho_{p,k}u_{p,k} \rightarrow \rho_k u_k$ strongly in $L^1((0,T)\times \Omega_k)$, we have
		\begin{equation*}
			\lim_{p\rightarrow\infty}\int_0^T\int_{\Omega_k} \rho_{p,k}u_{p,k}\,\partial_t\psi \,\mathrm{d}x\,\mathrm{d}t=\int_0^T\int_{\Omega_k} \rho_ku_k\,\partial_t\psi\,\mathrm{d}x\,\mathrm{d}t.
		\end{equation*}
		\item For the second term, the strong convergence of $u_{p,k}$ in $L^2((0,T)\times \Omega_k)$ and the uniform bound of $\rho_{p,k}$ give $\rho_{p,k}u_{p,k}^2 \rightarrow \rho_k u_k^2$ strongly in $L^1((0,T)\times \Omega_k)$. Thus
		\begin{equation*}
			\lim_{p\rightarrow\infty}\int_0^T\int_{\Omega_k} \rho_{p,k}u_{p,k}^2\,\partial_x\psi\,\mathrm{d}x\,\mathrm{d}t=\int_0^T\int_{\Omega_k} \rho_ku_k^2\,\partial_x\psi\,\mathrm{d}x\,\mathrm{d}t.
		\end{equation*}
		\item For the third term, since $\tau_{p,k} \rightharpoonup \tau_k$ weakly in $ L^2((0,T)\times \Omega_k)$ and $\partial_x \psi$ is smooth with compact support, we get
		\begin{equation*}
			\lim_{p\rightarrow\infty}\mu\int_0^T\int_{\Omega_k} \tau_{p,k}\,\partial_x\psi\,\mathrm{d}x\,\mathrm{d}t=\mu\int_0^T\int_{\Omega_k} \tau_k\,\partial_x\psi\,\mathrm{d}x\,\mathrm{d}t.
		\end{equation*}
		\item Finally,
		for the fourth term, the strong convergence $\rho_{p,k}^\gamma \rightarrow \rho_k^\gamma$  in $L^1((0,T)\times \Omega_k)$ gives us
		\begin{equation*}
			\lim_{p\rightarrow\infty}a\int_0^T\int_{\Omega_k} \rho_{p,k}^\gamma\,\partial_x\psi\,\mathrm{d}x\,\mathrm{d}t=a\int_0^T\int_{\Omega_k} \rho_k^\gamma\,\partial_x\psi\,\mathrm{d}x\,\mathrm{d}t.
		\end{equation*}
	\end{itemize}
    Therefore, we arrive at the limiting equation
	\begin{equation*}
		\partial_t(\rho_k u_k)+\partial_x(\rho_k u_k^2)-\mu\partial_x\tau_k + a\partial_x\rho_k^\gamma=0 \quad \text{in } \mathcal{D}^{\prime}((0,T)\times\Omega_k),
	\end{equation*}
	without imposing further relation between $\tau_k$ and $\partial_x u_k$. The identification of $\tau_k$ with the Lagrange multiplier $\pi_k$ will be carried out in the next step, using an energy comparison argument.
	
	\smallskip
	\noindent
	{\bf Step~6: Identification of the limit of the $p$-Laplacian term.}
	The limiting system is proved to satisfy the momentum equation, so we can test it with $u_{k}$ to obtain the energy equality:
	\begin{equation}\label{es: energy k}
		\int_{\Omega_k}\left(\frac{1}{2}\rho_k u_k^2+\frac{a}{\gamma-1}\rho_k^\gamma\right)(t)\,\mathrm{d}x + \mu\int_0^t\int_{\Omega_k}\tau_k\partial_x u_k\,\mathrm{d}x\,\mathrm{d}s = \int_{\Omega_k}\left(\frac{1}{2}\rho_{k,0} u_{k,0}^2+\frac{a}{\gamma-1}\rho_{k,0}^\gamma\right)\,\mathrm{d}x.
	\end{equation}
	Since $\tau_{p,k} \rightharpoonup \tau_k$ in $L^1((0,T)\times \Omega_k)$, by the weak lower semi-continuity of the $L^1$ norm and Young's inequality, we have
	\begin{align*}
		\int_0^T \int_{\Omega_k}\left|\tau_k\right| \mathrm{d} x \mathrm{~d} t & \leq \liminf_{p\rightarrow\infty}\int_0^T \int_{\Omega_k}\left|\tau_{p,k}\right|\,\mathrm{d}x\,\mathrm{d}t\\
		& \leq \liminf_{p\rightarrow\infty} \int_0^T \int_{\Omega_k}\left|\partial_xu_{p,k}\right|^{p-1}\,\mathrm{d}x\,\mathrm{d}t\\
		& \leq \liminf_{p\rightarrow\infty}\left(\frac{p-1}{p}\int_0^T \int_{\Omega_k}\left|\partial_x u_{p,k}\right|^p \,\mathrm{d}x\,\mathrm{d}t+\frac{1}{p} T\times|\Omega_k| \right)\\
		&\leq \liminf_{p\rightarrow \infty}\int_0^T \int_{\Omega_k}\left|\partial_x u_{p,k}\right|^p \,\mathrm{d}x\,\mathrm{d}t+C(T,k).
	\end{align*}
	Using the strong convergence $u_{p,k}\rightarrow u_k$ in $L^2((0,T)\times \Omega_k)$ and $\rho_{p,k}\rightarrow\rho_k$ in $L^r((0,T)\times \Omega_k)$, we pass to the limit $p\rightarrow \infty$ in \eqref{es: energy pk}
	\begin{equation*}
		\int_{\Omega_k}\left(\frac{1}{2}\rho_k u_k^2+\frac{a}{\gamma-1}\rho_k^\gamma\right)(t)\,\mathrm{d}x + \mu\int_0^t\int_{\Omega_k} |\tau_k|\,\mathrm{d}x\,\mathrm{d}s \leq \int_{\Omega_k}\left(\frac{1}{2}\rho_{k,0} u_{k,0}^2+\frac{a}{\gamma-1}\rho_{k,0}^\gamma\right)\,\mathrm{d}x.
	\end{equation*}
	Therefore, we have
	\begin{equation*}
		\int_0^t\int_{\Omega_k}|\tau_k|\,\mathrm{d}x\,\mathrm{d}s\leq \int_0^t\int_{\Omega_k} \tau_k\partial_x u_k\,\mathrm{d}x\,\mathrm{d}s.
	\end{equation*}
    By $|\partial_x u_k|\leq 1$, we must have $\tau_k\partial_x u_k=|\tau_k|$ \emph{a.e.}, which implies that
	\begin{equation*}
		|\tau_k|\,(1-|\partial_x u_k|)=0,\qquad \tau_k = |\tau_k|\,\partial_x u_k,\quad \text{a.e. in } (0,T) \times \Omega_k.
	\end{equation*}
	Setting $\pi_k = |\tau_k|$, we arrive at
	\begin{equation}
		\tau_k = \pi_k\,\partial_x u_k,\quad \pi_k\geq 0,\quad \pi_k(1-|\partial_x u_k|)=0.
	\end{equation}
	Therefore, we have derived the limiting system~\eqref{PDE,k}.
	 
	\smallskip
	\noindent
	{\bf Step~7: Improving the convergence of density to $C([0,T];L^{r}(\Omega_k))$.}
    For $r>1$, the renormalised continuity equation reads
	\begin{equation*}
		\partial_t(\rho_{p,k}^r)+\partial_x(\rho_{p,k}^r u_{p,k})+(r-1)\rho_{p,k}^r\partial_x u_{p,k}=0.
	\end{equation*}
	Integrating over $\Omega_k$ and using the Dirichlet boundary condition $\left.u_{p,k}\right|_{\partial\Omega_k}=0$, we obtain
	\begin{equation*}
		\frac{\mathrm{d}}{\mathrm{d}t}\int_{\Omega_k}\rho_{p,k}^r(t,x)\,\mathrm{d}x
		=-(r-1)\int_{\Omega_k}\rho_{p,k}^r(t,x)\partial_x u_{p,k}(t,x)\,\mathrm{d}x.
	\end{equation*}
	From the Proposition \ref{prop: rho dotu} and H\"{o}lder's inequality, we deduce that
	\begin{align*}
		\int_{\Omega_k}|\partial_x u_{p,k}|\,\mathrm{d}x &\leq |\Omega_k|^{1-\frac{1}{p}}\left(\int_{\Omega_k}|\partial_x u_{p,k}|^p\,\mathrm{d}x\right)^{\frac{1}{p}},
	\end{align*}
	which is uniformly bounded in $p$ thanks to the energy estimate. Hence, for any $r\in[1,\infty[$,
	\begin{align*}
		\text{$\left\{\frac{\mathrm{d}}{\mathrm{d}t}\int_{\Omega_k}\rho_{p,k}^r\,\mathrm{d}x\right\}$  is uniformly bounded in $p$.}
	\end{align*}
	Since $\rho_{p,k}\to \rho_k$ strongly in $L^r((0,T)\times \Omega_k)$, the Arzel\`a--Ascoli theorem and the uniform convexity of $L^r$ yield the strong convergence in $C([0,T];L^r(\Omega_k))$:
	\begin{equation}
		\rho_{p,k}\longrightarrow \rho_k \quad \text{strongly in } C([0,T];L^r(\Omega_k)).
	\end{equation}
	This completes the proof of Theorem \ref{thm: p_limit}.
\end{proof}

	\section{Proof of Theorem \ref{thm: main}}\label{sec: proof of thm main}
	We now turn to the proof of our Main Theorem~\ref{thm: main} on the existence of weak solutions to the limiting PDE system, whose spatial domain is the whole of $\mathbb{R}$. Note that the estimates in Section~\ref{sec: unif est for truncated} depend on $k \sim$ the size of the truncated spatial domain, and that the limiting processes $p \to \infty$ and $k \to \infty$ do not commute in general. Thus, extra care is needed when passing to the limits.

	As will be shown in this section, on the unbounded domain $\mathbb{R}$, the Cauchy stress $\tau_k$ will only converge as Radon measures. The identification of $\pi$ requires an energy comparison argument on each compact set, again motivated by \cite{Bresch2026}.
	\begin{lemma}[Local uniform estimates]\label{lem: local es}
		Fix any compact set $K\Subset\mathbb{R}$. Let $(\rho_k,u_k)$ be the solution obtained in Theorem \ref{thm: p_limit}.
		Under the hypotheses of Theorem \ref{thm: main}, there exists a constant $C = C(T,K,\gamma,\mu,a,E_0,c_1(K),c_2)$ 
		such that for all sufficiently large $k$ with $K\subset\Omega_k$, the following estimates hold
		\begin{align}
			& \label{es: local rho}
			\frac{c_1(K) e^{-2t}}{\max\left\{1,\left(\frac{a}{\mu}\right)^{\frac{1}{\gamma}}c_2\right\}}\leq \rho_{k}(x,t)\leq \sup\limits_{x\in K}\rho_{k,0}(x)\exp\left[\frac{3}{2}t+C\right],\quad \text{a.e. on $K\times(0,T)$,}\\
			&\label{es: local energy}
			\sup_{t\in(0,T)}\left\{\int_{K}\left[\frac{1}{2}\rho_{k}\left|u_{k}\right|^2+\frac{a}{\gamma-1}\rho_{k}^{\gamma}\right]\,\mathrm{d}x + \mu \int_0^t\int_{K}|\tau_k|\mathrm{d}x\,\mathrm{d}s\right\} \nonumber \\
			&\leq E_{0}:=\int_{\mathbb{R}}\left[\frac{1}{2}\rho_{0}\left|u_{0}\right|^2+\frac{a}{\gamma-1}\rho_{0}^{\gamma}\right]\,\mathrm{d}x,\\
			&\label{es: local u_t}
			\|\partial_t u_k\|_{L^{1}(0,T;H^{-2}(K))}\leq C.
		\end{align}
	\end{lemma}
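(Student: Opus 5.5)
The three estimates are proved separately, and in each case the estimate is first established for $(\rho_{p,k},u_{p,k})$ with constants independent of $p$ and $k$. It is then carried to $(\rho_k,u_k)$ through the convergences of Theorem~\ref{thm: p_limit}.

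\textbf{Energy bound \eqref{es: local energy}.} This is the simplest of the three. By Proposition~\ref{prop: energy}, the energy of $(\rho_{p,k},u_{p,k})$ on $\Omega_k\supset K$ at time $t$, plus the dissipation $\mu\int_0^t\int_{\Omega_k}|\partial_xu_{p,k}|^p$, equals $E_{p,k,0}\le E_0$. Restricting the nonnegative integrands to $K$ only decreases the left side. We then pass to the limit $p\to\infty$ exactly as in Step~6 of Theorem~\ref{thm: p_limit}, now with integrals over $K$:
\begin{itemize}
\item Strong convergence of $\rho_{p,k}$ in $C([0,T];L^r)$ handles the potential term.
\item For the kinetic term, strong convergence of $u_{p,k}$ gives the bound for a.e.\ $t$; weak lower semicontinuity extends it to every $t$ if needed.
\item For the stress, weak lower semicontinuity of the $L^1(K\times(0,t))$ norm gives $\int|\tau_k|\le\liminf\int|\partial_xu_{p,k}|^{p-1}$. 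Young's inequality bounds the latter by $\int|\partial_x u_{p,k}|^p+\tfrac1p T|K|$, and the last term vanishes as $p\to\infty$.
\end{itemize}
The Young term is only $O(|K|T/p)$, so no $k$-dependent constant appears and the bound is exactly $E_0$.

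\textbf{Density bounds \eqref{es: local rho}.} This is the main obstacle, because the constants in Proposition~\ref{prop: bound for stress and rho} depend on $k$ through $c_1(\Omega_k)$ and through the Basov--Shelukhin potential \eqref{basov-shelukhin}.

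For the \emph{lower} bound I use the characteristic argument. The estimate \eqref{es: u_x} is pointwise and $k$-independent. Integrating along the flow $X_t$, we get $\rho_{p,k}(t,X_t(x))\ge \rho_{p,k,0}(x)e^{-2t}/\max\{1,(a/\mu)^{1/\gamma}c_2\}$. Since $\|u_{p,k}\|_{L^\infty}$ is bounded on $[0,T]$ (from $|\partial_x u|\lesssim 1$ and the energy), characteristics travel a finite distance. Hence the backward characteristics from $K$ stay in a slightly enlarged compact set $K'$, and on $K'$ the truncated data equal $\rho_0\ge c_1(K')$ for $k$ large. I would absorb $K'$ into the notation $c_1(K)$.

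For the \emph{upper} bound I replace the global potential by a localized one. I fix a cutoff $\chi$ equal to $1$ on $K$, supported in $K'$, and use the representation $\sigma_{p,k}(t,x)=\int_{y}^{x}\rho\dot u\,\mathrm{d}z+\sigma_{p,k}(t,y)$, averaged in $y$ over $K'$ against $\chi$. Equivalently, I define
\[
\psi(t,x)=\int_0^t(\rho u^2-\mu\tau+a\rho^\gamma)\,\mathrm{d}s-\int\chi(y)\int_y^x\rho_0u_0\,\mathrm{d}z\,\mathrm{d}y .
\]
The mean-zero normalization is then computed on $K'$ rather than on $\Omega_k$. The key point is that the bound for $|\psi(t)-\psi(0)|$ must use only the energy on $K'$, which is bounded by $E_0$, together with $\int_0^t\int_{K'}|\tau|\le E_0/\mu$. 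Both are $k$-independent, so the Gronwall step along characteristics gives the bound $\sup_K\rho_{k,0}\exp[\tfrac32 t+C]$. These estimates are pointwise a.e.\ and pass to the limit $p\to\infty$ by the strong (hence a.e.\ along a subsequence) convergence of $\rho_{p,k}$.

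\textbf{Time derivative bound \eqref{es: local u_t}.} I write $\rho_k\partial_tu_k=-\rho_ku_k\partial_xu_k-\partial_x(a\rho_k^\gamma-\mu\tau_k)$, which follows from the momentum and continuity equations. On $K$ the right side is estimated term by term:
\begin{itemize}
\item $\tau_k$ is bounded in $L^1(0,T;L^1(K))$ by \eqref{es: local energy}, so $\partial_x\tau_k$ is bounded in $L^1(0,T;H^{-2}(K))$, using $L^1\hookrightarrow H^{-1}$ in one dimension.
\item $\rho^\gamma_k$ is bounded by the density bound just proved.
\item $u_k\partial_xu_k$ is bounded because $|\partial_xu_k|\le1$ and $u_k\in L^\infty(L^2(K))$ by the lower density bound.
\end{itemize}
To divide by $\rho_k$ without differentiating it, I would instead work with the conserved form: bound $\partial_t(\rho_ku_k)$ in $L^1H^{-2}(K)$ and $\partial_t\rho_k$ in $L^\infty H^{-1}(K)$, then expand $\partial_t u_k=\partial_t(\rho_ku_k/\rho_k)$. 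If this product-rule step proves too delicate, I would state the estimate for $\partial_t(\rho_ku_k)$ instead, which is what the subsequent Aubin--Lions argument actually needs.
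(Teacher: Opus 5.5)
Your plan follows the paper's proof in all three parts, so the issue is not the approach but one step where you go beyond it. The three parts match as follows:
\begin{itemize}
\item The energy bound is the energy identity restricted to $K$, plus lower semicontinuity of $\int|\tau|$. The paper instead restricts the limit identity using $\tau_k\partial_x u_k=|\tau_k|\ge 0$, which amounts to the same thing.
\item The density bounds come from the characteristic argument with the $k$-independent one-sided bound on $\partial_x u_{p,k}$, together with a Basov--Shelukhin potential normalised on a fixed compact set instead of $\Omega_k$.
\item The bound on $\partial_t u_k$ goes through $\partial_t(\rho_k u_k)$ and $\partial_t\rho_k$. Your hesitation on the final product-rule step is justified: the paper multiplies $H^{-2}$-valued distributions by the merely bounded coefficients $1/\rho_k$ and $u_k/\rho_k$, which is not a bounded operation. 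Your fallback proves a different estimate, so the later compactness step would have to be run on $\rho_k u_k$.
\end{itemize}

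\textbf{The gap.} You correctly observe that the characteristic argument bounds $\rho_{p,k}(t,X_t(x_0))$ in terms of data at the foot $x_0$, and that $X_t(x_0)\in K$ does not force $x_0\in K$. The paper's proof passes over this silently; it simply writes $c_1(K)$ and $\sup_K\rho_{k,0}$. The same issue enters your upper bound, because the Gronwall step compares $\psi(t,X_t(x_0))$ with $\psi(0,x_0)$, and a potential normalised near $K$ is only controlled within bounded distance of $K$. However, your confinement step does not work as stated.
\begin{itemize}
\item For $u_{p,k}$ there is no two-sided bound $|\partial_x u_{p,k}|\lesssim 1$. You only have $\partial_x u_{p,k}\le(\tfrac{a}{\mu}\rho_{p,k}^\gamma+1)^{1/(p-1)}$.
\item The energy controls $\sqrt{\rho_{p,k}}\,u_{p,k}$, not $u_{p,k}$. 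Turning it into a pointwise velocity bound near a point needs a lower density bound near that point, which is exactly what you are trying to prove.
\item The global $L^\infty$ bounds on $u_{p,k}$ that you actually have depend on $k$, through $|\Omega_k|$ and the Dirichlet condition. They give a $K'$ that grows with $k$, hence a useless $c_1(K')$.
\end{itemize}

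\textbf{A way to close it.} Prove the density bounds directly for $\rho_k$, after $p\to\infty$, where Theorem~\ref{thm: p_limit} gives $|\partial_x u_k|\le 1$. The flow of $u_k$ is then increasing and bi-Lipschitz: $e^{-t}|x-y|\le|X_t(x)-X_t(y)|\le e^{t}|x-y|$. Bounded weak solutions of the continuity equation with a Lipschitz field vanishing on $\partial\Omega_k$ are Lagrangian, so $\rho_{k,0}(x_0)e^{-t}\le\rho_k(t,X_t(x_0))\le\rho_{k,0}(x_0)e^{t}$. In particular $\rho_k\le c_2e^{T}$ uniformly in $k$, with no potential needed at all.

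For the feet, first bound the speed of a single characteristic:
\begin{itemize}
\item For $k$ large, $X_t([y_0,y_0+1])$ is an interval of length at least $e^{-t}$ starting at $X_t(y_0)$. On it $\rho_k\ge c_1([y_0,y_0+1])e^{-t}$, and $u_k(t,\cdot)$ is $1$-Lipschitz.
\item If $|u_k(t,X_t(y_0))|=M$, the bound $\int\rho_k u_k^2\le 2E_0$ forces $c_1([y_0,y_0+1])e^{-t}\min\{e^{-t},M/2\}\,M^2/4\le 2E_0$.
\item Hence $M\le V(y_0)$, with $V$ depending only on $E_0$, $T$ and $c_1([y_0,y_0+1])$.
\end{itemize}
Then confine the feet using monotonicity. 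Let $K\subset[-R,R]$ and suppose $X_t(x_0)\in K$ with $x_0>R$. Monotonicity puts $X_t([R,x_0])$ inside $[X_t(R),R]$, an interval of length at most $TV(R)$. The bi-Lipschitz bound gives it length at least $e^{-t}(x_0-R)$. Hence $x_0\le R+e^{T}TV(R)$, and symmetrically on the left.

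This yields the lower bound with $c_1$ of that fixed enlarged interval in place of $c_1(K)$, which is the form a correct proof gives. The constant $c_1(K)$ in the statement is not justified by the paper's argument either.
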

	\begin{proof}
		Since $K\Subset\Omega_k$, inequality \eqref{es: u_x} in Proposition \ref{prop: bound for stress and rho} implies that
		\begin{equation*}
			\partial_x u_{p,k}\leq \left(\frac{a}{\mu}\rho_{p,k}^{\gamma}(x,t)+1\right)^{\frac{1}{p-1}}\quad \text{for a.e. } x\in K.
		\end{equation*}
		Following the proof of lower bound for density in Proposition \ref{prop: bound for stress and rho}, we obtain that
		\begin{equation*}
			\rho_{p,k}(t,x)\geq \frac{c_1(K) e^{-2t}}{\max\left\{1,\left(\frac{a}{\mu}\right)^{\frac{1}{\gamma}}c_2\right\}} \quad\text{for a.e. }(t,x)\in(0,T)\times K,
		\end{equation*}
		where $c_1(K), c_2$ are lower and upper bounds of the initial data. For the upper bound, we again repeat the arguments in Proposition \ref{prop: bound for stress and rho}, replacing the domain $\Omega_k$ by the compact set $K$ and the length factor $|\Omega_k|$ by the Lebesgue measure $|K|$. This gives
		\begin{equation*}
			\rho_{p,k}\leq \sup\limits_{K}\rho_{p,k,0}(x)\exp\left[\frac{3}{2}t+C\right].
		\end{equation*}
		We may now conclude \eqref{es: local rho} by sending $p\rightarrow \infty$.
		
		Next, we restrict the energy equality \eqref{es: energy k} to $K$. As $\tau_k\partial_x u_k=|\tau_k|$ \emph{a.e.}, we obtain
		\begin{equation*}
			\int_K\left(\frac{1}{2}\rho_k u_k^2+\frac{a}{\gamma-1}\rho_k^\gamma\right)(t)\,\mathrm{d}x
			+\mu\int_0^t\int_K |\tau_k|\,\mathrm{d}x\,\mathrm{d}s
			\leq E_{k,0}\leq E_0.
		\end{equation*}
		The second inequality $E_{k,0}\leq E_0$ was proved in Proposition \ref{prop: energy}. This proves \eqref{es: energy k}.
		
		Finally, the momentum equation reads
		\begin{equation*}
			\partial_t(\rho_k u_k)=-\partial_x(\rho_k u_k^2)+\mu\partial_x\tau_k-a\partial_x\rho_k^\gamma.
		\end{equation*}
		From the density bounds \eqref{es: local rho} and energy estimates \eqref{es: local energy}, we have $u_k\in L^{\infty}(0,T;L^2(K))$. Together with $|\partial_x u_k|\leq 1$, this implies $u_k\in L^{\infty}(0,T;H^1(K))$. For any test function $\varphi\in C_c^{\infty}(\mathbb{R})$ with $\operatorname{supp}\varphi \subset K$, we have
		\begin{align*}
			\left|\langle\partial_x (\rho_k u_k^2),\varphi\rangle \right|=&\left|\int_{K}(\rho_ku_k^2) \partial_x \varphi\,\mathrm{d}x\right|\\
			\leq &\left\|\rho_k\right\|_{L_x^{\infty}(K)}\left\|u_k\right\|_{L_x^{\infty}(K)}\left\|u_k\right\|_{L_x^{2}(K)} \left\|\partial_x \varphi\right\|_{L_x^{2}}\\
			\leq & C(E_0,K)\left\|\varphi\right\|_{H_x^1},
		\end{align*}
		so $\partial_x (\rho_k u_k^2) \in L^{\infty}(0,T;H^{-1}(K))$ uniformly in $k$, and consequently in $L^{2}(0,T;H^{-1}(K))$ uniformly in $k$.
		Similarly, since $\rho_k$ is bounded by\eqref{es: local rho}
		\begin{align*}
			\left|\langle\partial_x \rho_k^{\gamma},\varphi\rangle \right|=&\left|\int_{K}\rho_k^{\gamma} \partial_x \varphi\,\mathrm{d}x\right|\\
			\leq &\left\|\rho_k\right\|_{L_x^{\infty}(K)}^{\gamma}|K|^{\frac{1}{2}} \left\|\partial_x \varphi\right\|_{L_x^{2}}\\
			\leq & C(E_0,K)\left\|\varphi\right\|_{H_x^1},
		\end{align*}
		hence $\partial_x \rho_k^{\gamma}$ is uniformly bounded in $L^{\infty}(0,T;H^{-1}(K))\hookrightarrow L^{2}(0,T;H^{-1}(K))$, independent of $k$. 
		
		For $\partial_x \tau_k$, recall that $\tau_{p,k} \rightharpoonup \tau_k$ weakly in $L^1((0,T)\times(\Omega_k))$. Then
		\begin{align}
			\label{es: tau L1}
			\int_{0}^{T}\int_{K}|\tau_k|\,\mathrm{d}x \,\mathrm{d}t &\leq \liminf_{p\rightarrow \infty}\int_{0}^{T}\int_{K}|\tau_{p,k}|\,\mathrm{d}x \,\mathrm{d}t\nonumber\\
			&\leq \liminf_{p\rightarrow\infty}\int_{0}^{T}\int_{K}|\partial_{x}u_{p,k}|^{p-1}\,\mathrm{d}x \,\mathrm{d}t\\
			&\leq \liminf_{p\rightarrow \infty}\left(\frac{p-1}{p}\int_{0}^{T}\int_{K}|\partial_{x}u_{p,k}|^{p}\,\mathrm{d}x \,\mathrm{d}t+\frac{1}{p}T\times |K|\right)\nonumber\\
			&\leq C(E_0,K,T),\nonumber
		\end{align}
		which implies $\tau_k\in L^{1}((0,T)\times K)$ uniformly in $k$. By Sobolev embedding, $\tau_k\in L^{1}(0,T;H^{-1}(K))$ and hence $\mu\partial_x\tau_k\in L^{1}(0,T;H^{-2}(K))$ uniformly in $k$. In addition, $L^{2}(0,T;H^{-1}(K))\hookrightarrow L^{1}(0,T;H^{-2}(K))$ continuously, so
		\begin{equation*}
			\|\partial_t(\rho_k u_k)\|_{L^{1}(0,T;H^{-2}(K))}\leq C(K).
		\end{equation*}
		Moreover, for any test function $\varphi\in C_c^{\infty}(\mathbb{R})$ with $\operatorname{supp}\varphi \subset K$,
		\begin{align*}
			\left|\langle\partial_x (\rho_k u_k),\varphi\rangle \right|=&\left|\int_{K}(\rho_ku_k) \partial_x \varphi\,\mathrm{d}x\right|\\
			\leq &\left\|\rho_k\right\|_{L_x^{\infty}(K)}^{\frac{1}{2}}\|\rho_k^{\frac{1}{2}}u_k\|_{L_x^{2}(K)}\left\|\partial_x \varphi\right\|_{L_x^{2}}\\
			\leq & C(E_0,K)\left\|\varphi\right\|_{H_x^1},
		\end{align*}
		which shows that $\partial_x (\rho_k u_k)\in L^{\infty}(0,T;H^{-1}(K))$ uniformly in $k$, and hence in $L^{2}(0,T;H^{-1}(K))$. Together with the continuity equation $\partial_t\rho_k=-\partial_x(\rho_k u_k)$, we deduce
		\begin{equation*}
			\|\partial_t\rho_k\|_{L^2(0,T;H^{-1}(K))}\leq C(E_0, K).
		\end{equation*}
		Finally, since $\rho_{k}$ has a positive bound (depending only on $K$), we may express 
		\begin{equation*}
			\partial_t u_k = \frac{1}{\rho_k}\partial_t(\rho_k u_k) - \frac{u_k}{\rho_k}\partial_t\rho_k,
		\end{equation*}
		where the coefficients $\frac{1}{\rho_k}$ and $\frac{u_k}{\rho_k}$ belong to $L^\infty((0,T)\times K)$ uniformly in $k$. Thus
		\begin{equation*}
			\|\partial_t u_k\|_{L^{1}(0,T;H^{-2}(K))}\leq C(E_0,K). 
		\end{equation*}
		This completes the proof of \eqref{es: local u_t}
	\end{proof}

	Finally, we are at the stage of proving out main result of the paper.
	
	\begin{proof}[Proof of Theorem \ref{thm: main}]
		
		We divide our proof into four steps.
		
		\smallskip
		\noindent
		{\bf Step~1: Compactness.} Let $K\Subset \mathbb{R}$ be an arbitrary compact set. By Lemma~\ref{lem: local es}, for all sufficiently large $k$ so that $K\subset \Omega_k$, we have
		\begin{align*}
			&\|u_k\|_{L^2(0,T;H^1(K))} + \|\partial_t u_k\|_{L^{1}(0,T;H^{-2}(K))} \leq C(K,T),\\
			&\|\rho_k\|_{L^\infty((0,T)\times K)} + \|\partial_t\rho_k\|_{L^2(0,T;H^{-1}(K))} \leq C(K,T),
		\end{align*}
		where the constants depend only on $T$, $K$, $E_0$ and the initial data, but not on $k$.
		
		Set $X=H^1(K)$, $Y=L^2(K)$, $Z=H^{-2}(K)$. The embedding $X \hookrightarrow \hookrightarrow Y$ is compact and $Y\hookrightarrow Z$ is continuous. Since $\{u_k\}$ is bounded in $L^2(0,T;X)$ and $\{\partial_t u_k\}$ is bounded in $L^1(0,T;Z)$, we may apply the Aubin--Lions--Simon lemma (Theorem \ref{thm:ALS}) to deduce that $\{u_k\}$ is relatively compact in $L^2(0,T;Y)$. Hence, up to a subsequence (not relabelled), we deduce that 
		\begin{equation*}
			u_k\rightarrow u \quad \text{strongly in}\quad  L^2((0,T)\times K).
		\end{equation*}
        Since $\rho_k$ is uniformly bounded in $L^\infty((0,T)\times K)$, it is uniformly bounded 
        in $L^2(0,T; L^q(K))$ for every $1\leq q<\infty$. Since the embedding $L^q(K)\hookrightarrow\hookrightarrow L^2(K)\, (q>2)$ is compact, applying the Theorem \ref{thm:ALS} once again with $X=L^q(K)$, $Y=L^2(K)$ and $Z=H^{-1}(K)$, we obtain a subsequence (not relabelled) such that 
		\begin{equation*}
			\rho_k\rightarrow\rho \quad\text{strongly in } L^2((0,T)\times K).
		\end{equation*}
        By interpolation with the uniform $L^\infty$ bound, we upgrade this to
        \begin{equation*}
            \rho_k \longrightarrow \rho \quad \text{strongly in } L^r((0,T)\times K) \quad \text{for every } 1\leq r<\infty.
        \end{equation*}
		%From the continuity equation $\partial_t \rho_k=-\partial_x(\rho_ku_k)$ and uniform bounds established above, we have $\partial_t \rho_k\in L^{\infty}(0,T;H^{-1}(K))$. Since the embedding $L^r(K)\hookrightarrow\hookrightarrow H^{-1}(K)$ is compact, applying the Theorem \ref{thm:ALS} once again with $X=L^r(K)$, $Y=H^{-1}(K)$ and $p=\infty$, we obtain a subsequence (not relabelled) such that 
		%\begin{equation*}
		% \rho_k\rightarrow\rho \quad\text{strongly in } C([0,T];H^{-1}(K)).
		%\end{equation*}
		Let $K_m = [-m,m]\Subset \mathbb{R}$. For each fixed $m$, by the compactness obtained above, there exists a subsequence $\left\{k^{(m)}_j\right\}_{j=1}^\infty$ such that
		\begin{equation*}
			u_{k^{(m)}_j} \rightarrow u^{(m)} \quad\text{in } L^2((0,T)\times K_m),\qquad
			\rho_{k^{(m)}_j} \rightarrow \rho^{(m)} \quad\text{in } L^r((0,T)\times K_m),\quad r\in[1,\infty[.
		\end{equation*}
		By diagonalisation (namely, choosing $k_j = k_j^{(j)}$) and uniqueness of limits, we deduce the existence of functions $\rho$, $u$, defined globally on $\mathbb{R}$, such that  $u^{(m)} = u|_{K_m}$ and $\rho^{(m)} = \rho |_{K_m}$ for all $m$. Along this diagonal subsequence, the following convergence results hold for every $m$:
		\begin{align*}
			u_{k}\rightarrow u \quad\text{strongly in } L^2((0,T)\times K_m),\qquad
			\rho_{k}\rightarrow \rho \quad\text{strongly in } L^r((0,T)\times K_m),\quad r\in[1,\infty[.
		\end{align*}
		Moreover, the uniform $L^1$ bound \eqref{es: tau L1} for $\{\tau_k\}$ implies the existence of a subsequence such that $\tau_{k}$ converges to $\tau$ in the distributional sense, i.e.,
		\begin{equation*}
			\int_0^T\int_{\mathbb{R}} \tau_{k}\,\varphi\,\mathrm{d}x\,\mathrm{d}t \rightarrow \int_0^T\int_{\mathbb{R}} \tau\,\varphi\,\mathrm{d}x\,\mathrm{d}t \qquad\text{for every } \varphi\in C_c^\infty((0,T)\times\mathbb{R}).
		\end{equation*}
		Restricting to continuous test functions, we deduce that $\tau_{k}\stackrel{*}{\rightharpoonup}\tau$ as Radon measures.
		
		\smallskip
		\noindent
		{\bf Step~2: Passage to the limits.}
		By Theorem \ref{thm: p_limit}, $|\partial_x u_k|\leq 1$ \emph{a.e.} in $(0,T)\times\Omega_k$.
		In particular, for any compact set $K_m\Subset\Omega_k$, we have $\|\partial_x u_k\|_{L^\infty((0,T)\times K_m)} \leq 1$. By the Banach--Alaoglu theorem, there exists a subsequence (still denoted by $k$) such that $\partial_x u_k \stackrel{*}{\rightharpoonup} g $ in $L^\infty((0,T)\times K_m)$. Since $u_k \rightarrow u \text{ strongly in } L^2((0,T)\times K_m)$, for any test function $\phi\in C_c^{\infty}((0,T)\times K_m)$, we have
		\begin{align*}
			\int_0^T\int_{K_m}u\,\partial_x\phi\,\mathrm{d}x\,\mathrm{d}t=&\lim_{k\rightarrow\infty}\int_0^T\int_{K_m}u_k\,\partial_x\phi\,\mathrm{d}x\,\mathrm{d}t=-\lim_{k\rightarrow\infty}\int_0^T\int_{K_m}\partial_x u_k \phi\,\mathrm{d}x\,\mathrm{d}t\\=&-\int_0^T\int_{K_m}g\, \phi\,\mathrm{d}x\,\mathrm{d}t,
		\end{align*}
		so that $g=\partial_x u$ \emph{a.e.} and hence $\partial_x u_k \stackrel{*}{\rightharpoonup} \partial_x u$ in $L^\infty((0,T)\times K_m)$.
		
		Therefore, from the uniform estimates from Lemma \ref{lem: local es} and the Theorem \ref{thm:ALS}, we deduce that
		\begin{align}\label{converge k}
			u_k &\longrightarrow u \quad \text{strongly in } L^2((0,T)\times K_m),\nonumber\\
			\partial_x u_k &\rightharpoonup \partial_x u \quad \text{weakly-* in } L^\infty((0,T)\times K_m),\nonumber\\
			\tau_k &\stackrel{*}{\rightharpoonup} \tau \quad \text{as Radon measures},\\
			\rho_k &\rightharpoonup\rho \quad \text{weakly-* in } L^\infty((0,T)\times K_m),\nonumber\\
            \rho_{k}&\rightarrow \rho \quad\text{strongly in } L^r((0,T)\times K_m),\quad r\in[1,\infty[.\nonumber
			%\rho_k &\longrightarrow \rho \quad \text{strongly in } C([0,T];H^{-1}(K_m)).\nonumber
		\end{align}
        Therefore, taking any test function $\varphi\in C_c^\infty((0,T)\times\mathbb{R})$ and choosing $m$ such that $\operatorname{supp}\varphi\subset(0,T)\times(-m,m)$,
		we can pass to the limits in the weak formulation of the continuity and momentum equations, obtaining that
		\begin{align*}
			&\int_0^T\int_{\mathbb{R}} (\rho\partial_t\varphi+\rho u\partial_x\varphi)\,\mathrm{d}x\,\mathrm{d}t=0,\\
			&\int_0^T\int_{\mathbb{R}} (\rho u\partial_t\varphi+\rho u^2\partial_x\varphi+\mu\tau\partial_x\varphi+a\rho^\gamma\partial_x\varphi)\,\mathrm{d}x\,\mathrm{d}t=0,
		\end{align*}
		In other words, $(\rho,u,\tau)$ satisfies the limit system in the distributional sense.
		
		Moreover, from $\partial_x u_k\stackrel{*}{\rightharpoonup} \partial_x u$ in  $L^\infty((0,T)\times K_m)$ and the bound $|\partial_x u_k|\leq 1$, we deduce by weak-* lower semi-continuity in $L^{\infty}$ that
		\begin{equation*}
			|\partial_x u|\leq 1 \quad \text{a.e. in }(0,T)\times K_m, 
		\end{equation*}
		From the energy estimates \eqref{es: local energy} and uniform bounds for $\rho_k$, we obtain that $u_k\in L^{\infty}(0,T;L^2(K_m))$ uniform in $k$. Since $u_k\rightarrow u$ strongly in $L^2(0,T;L^2(K_m))$, there is a subsequence (still denoted by $u_k$) that converges weakly-* in $L^{\infty}(0,T;L^2(K_m))$ to some limit. By uniqueness of strong convergence, this limit coincides with $u$.  Furthermore, the same energy estimate also provides a uniform $L^{\infty}$ bound for $\partial_x u$, so $u\in L^\infty(0,T;H^1(K_m)) \emb  L^{\infty}((0,T)\times K_m)$.

		\smallskip
		\noindent
		{\bf Step~3: Identification of the limiting stress via energy comparison.}
		For a fixed compact set $K$, choose $\phi \in C_c^{\infty}(\mathbb{R})$ with $\phi \equiv 1$ on $K$, and $\operatorname{supp} \phi \subset \Omega_k$ for sufficiently large $k$. Recall from Theorem~\ref{thm: p_limit} that $u_{p,k} \rightharpoonup u_k$ weakly in $H^1(\Omega_k)$, hence the boundary condition $u_k|_{\partial \Omega_k} = 0$ holds for $u_k$. As a consequence, $\phi u_k \in H_0^1(\Omega_k)$ can be used as a test function in the momentum equation \eqref{PDE,k}
		\begin{align*}
			&\frac{\mathrm{d}}{\mathrm{d}t}\int_{\Omega_k} \phi\left(\frac{1}{2}\rho_k u_k^2+\frac{a}{\gamma-1}\rho_k^\gamma\right)\,\mathrm{d}x
			+\mu\int_{\Omega_k}\phi |\tau_k|\,\mathrm{d}x \\
			&\qquad = \int_{\Omega_k} \rho_k u_k^2\,\partial_x\phi\,\mathrm{d}x
			+\mu\int_{\Omega_k}\tau_k u_k \partial_x\phi \,\mathrm{d}x
			-a\int_{\Omega_k}\rho_k^\gamma u_k \partial_x\phi\,\mathrm{d}x,
		\end{align*}
		where we have employed the identity $\tau_k\partial_x u_k=|\tau_k|$ \emph{a.e.} in $\Omega_k$. Then, integration over time leads to
		\begin{align}\label{ineq: local energy k}
			&\int_K\left(\frac{1}{2}\rho_k u_k^2+\frac{a}{\gamma-1}\rho_k^\gamma\right)(t,x)\,\mathrm{d}x
			+\mu\int_0^t\int_K |\tau_k|\,\mathrm{d}x\,\mathrm{d}s \\
			\leq& \int_{\Omega_k}\phi\left(\frac{1}{2}\rho_{k,0} u_{k,0}^2+\frac{a}{\gamma-1}\rho_{k,0}^\gamma\right)\,\mathrm{d}x
			+ C(K)\int_0^t\int_{\operatorname{supp}\partial_x\phi} (|u_k|^2+|u_k|)\,\mathrm{d}x\,\mathrm{d}s\nonumber\\
			&+\mu\int_0^t\int_{\operatorname{supp}\partial_x\phi} \tau_k u_k \partial_x\phi \,\mathrm{d}x\,\mathrm{d}s,\nonumber
		\end{align}
		where the constant $C(K)$ relies on $\|\partial_x\phi\|_{L_x^\infty}$ and the uniform boundedness of $\rho_k$.
		
		The strong convergence $u_k\rightarrow u$ in $L^2((0,T)\times K)$ implies that
		\begin{equation*}
			\int_0^t\int_{\operatorname{supp}\partial_x\phi} \left(|u_k|^2+|u_k|\right)\,\mathrm{d}x\,\mathrm{d}s
			\longrightarrow
			\int_0^t\int_{\operatorname{supp}\partial_x\phi} \left(|u|^2+|u|\right)\,\mathrm{d}x\,\mathrm{d}s.
		\end{equation*}
		Also, as $u_k\partial_x\phi \to u\partial_x\phi$ strongly in $L^2$ with compact support and $\tau_k \stackrel{*}{\rightharpoonup} \tau$ as Radon measures, we have that
		\begin{equation*}
			\int_0^t\int_{\operatorname{supp}\partial_x\phi} \tau_k u_k \partial_x\phi\,\mathrm{d}x\,\mathrm{d}s
			\longrightarrow
			\int_0^t\int_{\operatorname{supp}\partial_x\phi} \tau\, u\, \partial_x\phi\,\mathrm{d}x\,\mathrm{d}s.
		\end{equation*}
		Thus, taking limsup in \eqref{ineq: local energy k} leads to
		\begin{equation}\label{eq:local_energy_phi}
			\begin{aligned}
				&\int_K\left(\frac{1}{2}\rho u^2+\frac{a}{\gamma-1}\rho^\gamma\right)(t,x)\,\mathrm{d}x
				+\mu\int_0^t\int_K |\tau|\,\mathrm{d}x\,\mathrm{d}s \\
				\leq& \int_{\Omega_k}\phi\left(\frac{1}{2}\rho_0 u_0^2+\frac{a}{\gamma-1}\rho_0^\gamma\right)\,\mathrm{d}x
				+ C(K)\int_0^t\int_{\operatorname{supp}\partial_x\phi} \left(|u|^2+|u|\right)\,\mathrm{d}x\,\mathrm{d}s\\
				&+\mu\int_0^t\int_{\operatorname{supp}\partial_x\phi} \tau\, u\, \partial_x\phi\,\mathrm{d}x\,\mathrm{d}s.
			\end{aligned}
		\end{equation}
		We now choose a sequence of cut-off functions $\{\phi_m\}$ such that $\phi_m \equiv 1$ on $K$, $0 \leq \phi_m \leq 1$, $\phi_m \to \mathbb{I}_K$ pointwise, and $\operatorname{supp} \partial_x\phi_m \to \partial K$ with $\| \partial_x\phi_m\|_{L^1} \leq C$ uniformly. Since $\tau \in L_{\mathrm{loc}}^1((0,T)\times \mathbb{R})$ by \eqref{es: tau L1} and $u \in L^{\infty}((0,T)\times K)$, the product $\tau u$ is an $L_{\mathrm{loc}}^1$-function. Moreover, the size of $\operatorname{supp}\partial_x\phi_m$ tends to zero as $m \to \infty$. Then, in view of the absolute continuity of the integral, we obtain that
		\begin{equation*}
			\int_0^t\int_{\operatorname{supp}\partial_x\phi_m} \tau u\,\partial_x\phi_m\,\mathrm{d}x\,\mathrm{d}s \longrightarrow 0,
		\end{equation*}
		and similarly
		\begin{equation*}
			\int_0^t\int_{\operatorname{supp}\partial_x\phi_m} (|u|^2+|u|)\,\mathrm{d}x\,\mathrm{d}s \longrightarrow 0.
		\end{equation*}
		All the remaining terms on the right-hand side of \eqref{eq:local_energy_phi} vanish as $m\to\infty$. Hence, we arrive at the local energy inequality:
		\begin{equation}\label{es: energy klimit}
			\int_K\left(\frac{1}{2}\rho u^2+\frac{a}{\gamma-1}\rho^\gamma\right)(t,x)\,\mathrm{d}x
			+\mu\int_0^t\int_K |\tau|\,\mathrm{d}x\,\mathrm{d}s
			\leq \int_{K}\left(\frac{1}{2}\rho_0 u_0^2+\frac{a}{\gamma-1}\rho_0^\gamma\right)\,\mathrm{d}x.
		\end{equation}
		On the other hand, since the limit momentum equation holds in the distributional sense, we may test it with $\phi u$, where $\phi\in C_c^\infty(\mathbb R)$. Repeat the same argument as for \eqref{es: energy klimit}, we obtain the energy equality for $(\rho,u)$:
		\begin{equation}\label{es: energy}
			\int_K\left(\frac{1}{2}\rho u^2+\frac{a}{\gamma-1}\rho^\gamma\right)(t)\,dx
			+\mu\int_0^t\int_K \tau\,\partial_x u\,\mathrm{d}x\,\mathrm{d}s
			= \int_{K}\left(\frac{1}{2}\rho_0 u_0^2+\frac{a}{\gamma-1}\rho_0^\gamma\right)\,\mathrm{d}x.
		\end{equation}
		Now, one deduces from comparing \eqref{es: energy klimit} and \eqref{es: energy} that
		\begin{equation*}
			\int_0^t\int_K |\tau|\,\mathrm{d}x\,\mathrm{d}s
			\leq \int_0^t\int_K \tau\,\partial_x u\,\mathrm{d}x\,\mathrm{d}s .
		\end{equation*}
		As it is proved that $|\partial_x u|\leq 1$ \emph{a.e.}, we obtain that $\tau\partial_x u\leq |\tau|$ \emph{a.e.}, so 
		\begin{equation*}
			\int_0^t\int_K (|\tau|-\tau\partial_x u)\,\mathrm{d}x\,\mathrm{d}s = 0.
		\end{equation*}
		Thus,
		\begin{equation*}
			|\tau| = \tau\partial_x u \quad \text{a.e. in } (0,T)\times K.
		\end{equation*}
		Setting $\pi:=|\tau|$, we obtain that $\tau=\pi\partial_x u$, $\pi\geq 0$, and $\pi(1-|\partial_x u|)=0$ \emph{a.e.} in $K$. As $K$ is arbitrary, we may conclude \eqref{cdt} on $\mathbb{R}$.
		
		\smallskip
		\noindent
		{\bf Step~4: Strong convergence of the density in $C([0,T];L_{\mathrm{loc}}^r(\mathbb{R}))$.}
        From Step~3 above, we know
        \begin{equation*}
			\rho_k\longrightarrow \rho \quad \text{strongly in}\quad  L^r((0,T)\times K),\quad \text{for every }1\leq r<\infty.
		\end{equation*}
        Thus, up to an unrelabelled subsequence, $\rho_k \to \rho$ \emph{a.e.} on $K\times(0,T)$, and hence
		\begin{equation*}
			0<C_1(K)\leq \rho(t,x)\leq C_2(K)\quad\text{a.e. }(t,x)\in(0,T)\times K,
		\end{equation*}
		with $C_1(K), C_2(K)$ depending only on $T, \gamma, a, \mu, E_0$ and the compact set $K$. 
		
		Finally, we show that the convergence of $\rho_k\rightarrow \rho$ holds in $C((0,T);L^r(K))$ for any $1\leq r<\infty$. For the approximate solution $\rho_k$, the renormalised continuity equation on $\mathbb{R}$ reads
		\begin{equation*}
			\partial_t(\rho_k^r)+\partial_x(\rho_k^r u_k)+(r-1)\rho_k^r\partial_x u_k=0.
		\end{equation*}
		Integrating over $K$ and applying the divergence theorem, we obtain
		\begin{equation*}
			\frac{\mathrm{d}}{\mathrm{d}t}\int_{K}\rho_k^r(t,x)\,\mathrm{d}x
			=-(r-1)\int_{K}\rho_k^r\partial_x u_k(t,x)\,\mathrm{d}x-\left.\rho_k^r u_k\right|_{\partial K}.
		\end{equation*}
		From the energy bound \eqref{es: local energy} and $|\partial_x u_k|\leq 1$, we have $\|u_k\|_{L^\infty(0,T;H^1(K))}\leq C(K)$. By the one-dimensional Sobolev embedding $H^1(K)\hookrightarrow L^{\infty}(K)$, we know $\|u_k\|_{L^\infty((0,T)\times K)}\leq C(K)$. Hence, the boundary term is uniformly bounded
		\begin{equation*}
			\left\||\left.\rho_k^r u_k\right|_{\partial K}\right\|_{L^{\infty}}\leq C(K). 
		\end{equation*}
		Therefore, using the upper bound for $\rho_k$ and $|\partial_x u_k|\leq 1$ \emph{a.e.},
		\begin{equation}
			\left|\frac{\mathrm{d}}{\mathrm{d}t}\|\rho_k(t)\|_{L_x^r(K)}^r\right|
			\leq (r-1) \|\rho_k\|_{L_x^{\infty}(K)}^r|K|+C(K)
			\leq C(T,\gamma,\mu,a, E_0,K),
		\end{equation}
		which is uniform in k. Thus the functions $t\mapsto \|\rho_k(t)\|_{L^r(K)}^r$ are equi-(Lipschitz) continuous on $[0,T]$.
		
		The argument in that subsection relies only on the energy estimates, the \emph{a.e.} bound $|\partial_x u_k|\leq 1$, 
		and the uniform boundedness of $\rho_k$, all of which have been established uniformly in $k$ on $K$. 
		Therefore, we can repeat the same density compactness argument as in Step~6 of the proof of Theorem \ref{thm: p_limit}, with $\Omega_k$ replaced by $K$, $(\rho_{p,k},u_{p,k})$ replaced by $(\rho_k,u_k)$, and $(\rho_k,u_k)$ replaced by $(\rho,u)$. Thus, $\rho_k \to \rho$ in $C([0,T]; L^r(K))$ for each $r \in [1,\infty[$. By the arbitrariness of $K$, we conclude that
		\begin{equation}
			\rho_k\longrightarrow \rho \quad \text{strongly in } C([0,T];L^r_{\mathrm{loc}}(\mathbb{R}))\quad \text{for every }1\leq r<\infty.
		\end{equation}
		This completes the proof of Theorem \ref{thm: main}.
	\end{proof}

    \bigskip
	\appendix
	\section{Detailed A Priori Estimates}\label{sec: appendix}
	In this appendix, we provide detailed proofs of Propositions \ref{prop: bound for stress and rho}, \ref{prop: rho dotu}, and \ref{prop: bound tau_pk}, adapted from Bresch--Burtea--Szlenk \cite{Bresch2026} for the periodic setting. We present the modifications required by the Dirichlet boundary conditions and the dependence on the truncation parameter $k$.
	\begin{proof}[Proof of Proposition \ref{prop: bound for stress and rho}]
		Following \cite{Bresch2026}, we set $\sigma_{p,k}=\mu|\partial_x u_{p,k}|^{p-2}\partial_x u_{p,k}-a\rho_{p,k}^\gamma$. We shall prove that for all $(t,x)\in[0,T]\times\overline{\Omega}_k$,
		\begin{equation*}
			\sigma_{p,k}(t,x)\leq \sup_{x\in\Omega_k}\sigma_{p,k}(0,x)\leq \mu.
		\end{equation*}
		The argument combines the classical parabolic maximum principle in the interior with a separate analysis on the boundary $\partial\,\Omega_k$.

        As noted in the proof of Proposition \ref{prop: bound for stress and rho}, the two trivial cases (when $\sigma_{p,k}$ is constant in space or its maximum is non-positive) are immediate. We therefore assume that $\sigma_{p,k}$ is non-constant with a strictly positive maximum.
		%We first consider two trivial cases. If $\sigma_{p,k}$ is constant in space, then the desired bound follows immediately from the initial condition, since $\sigma_{p,k}(t,x)=\sigma_{p,k}(0,x)\leq \mu$ for all $t\geq 0$. On the other hand, if the maximum value of $\sigma_{p,k}$ is non-positive, then the conclusion $\sigma_{p,k}\leq \mu$ is trivially satisfied because $\mu>0$. Hence, in the sequel, we may assume that $\sigma_{p,k}$ is not constant and its maximum is strictly positive.
		
		Differentiating the momentum equation $\eqref{PDE,pk}_2$ and using the continuity equation $\eqref{PDE,pk}_1$, we obtain after simplification
		\begin{equation}\label{eq: sigma_pk1}
			\partial_t\partial_xu_{p,k}+u_{p,k}\partial_{xx}^2 u_{p,k}+\left(\partial_x u_{p,k}\right)^2-\partial_x\left(\frac{1}{\rho_{p,k}}\partial_x\left(\mu\left|\partial_x u_{p,k}\right|^{p-2}\partial_x u_{p,k}-\rho_{p,k}^{\gamma}\right)\right)=0.
		\end{equation}
		The function $H(s):=\left|s\right|^{p-2}s$ is strictly increasing with
		\begin{equation*}
			H^{\prime}(s)=(p-1)\left|s\right|^{p-2}\geq 0
		\end{equation*}
		and its inverse is given by
		\begin{equation}\label{H inverse}
			H^{-1}(s)=\left|s\right|^{p^{\prime}-2}s\quad \text{with} \quad \left(H^{-1}\right)^{\prime}(s)=\left(p^{\prime}-1\right)\left|s\right|^{p^{\prime}-2}\geq 0,
		\end{equation}
		where $p^{\prime}=\frac{p}{p-1}$ is the conjugate exponent of $p$. Note that $p^{\prime}-2=\frac{p}{p-1}-2=-\frac{p-2}{p-1}$. Multiplying \eqref{eq: sigma_pk1} by $\mu H^{\prime}(\partial_x u_{p,k})$ gives
		\begin{equation}\label{eq: H sigma_pk}
			\partial_t\left(\mu H\left(\partial_x u_{p,k}\right)\right)+u_{p,k} \partial_x\left(\mu H\left(\partial_x u_{p,k}\right)\right)+\mu H^{\prime}\left(\partial_x u_{p,k}\right)\left(\partial_x u_{p,k}\right)^2-\mu H^{\prime}\left(\partial_x u_{p,k}\right) \partial_x\left(\frac{1}{\rho_{p,k}} \partial_x \sigma_{p,k}\right)=0 .
		\end{equation}
		We obtain from \eqref{eq: H sigma_pk} that
		\begin{align*}
			& \partial_t \sigma_{p,k}+u_{p,k} \partial_x \sigma_{p,k}+\mu H^{\prime}\left(\partial_x u_{p,k}\right)\left(\partial_x u_{p,k}\right)^2-\mu H^{\prime}\left(\partial_x u_{p,k}\right) \partial_x\left(\frac{1}{\rho_{p,k}} \partial_x \sigma_{p,k}\right) \\
			& =-\left(\partial_t \rho_{p,k}^\gamma+u_{p,k} \partial_x \rho_{p,k}^\gamma\right)=\gamma \rho_{p,k}^\gamma \partial_x u_{p,k}.
		\end{align*}
		Simplifying further, we arrive at
		\begin{align}\label{eq: dt sigma1}
			\partial_t \sigma_{p,k}+u_{p,k} \partial_x \sigma_{p,k}&-\mu H^{\prime}\left(\partial_x u_{p,k}\right) \partial_x\left(\frac{1}{\rho_{p,k}} \partial_x \sigma_{p,k}\right) =\gamma \rho_{p,k}^\gamma \partial_x u_{p,k}-\mu(p-1)\left|\partial_x u_{p,k}\right|^p \nonumber\\
			& =\gamma\left(-\sigma_{p,k}+\mu\left|\partial_x u_{p,k}\right|^{p-2} \partial_x u_{p,k}\right) \partial_x u_{p,k}-\mu(p-1)\left|\partial_x u_{p,k}\right|^p \\
			& =-\gamma \sigma_{p,k} \partial_x u_{p,k}+\mu(1+\gamma-p)\left|\partial_x u_{p,k}\right|^p.\nonumber
		\end{align}
		%	Consequently,
		%	\begin{equation}
			%		\partial_t \sigma_{p,k}+u_{p,k}\partial_x \sigma_{p,k}-\mu H^{\prime}\left(\partial_x u_{p,k}\right)\partial_x\left(\frac{1}{\rho_{p,k}}\partial_x \sigma_{p,k}\right)=-\gamma \sigma_{p,k}\partial_x u_{p,k}+\mu\left(1+\gamma-p\right)\left|\partial_x u_{p,k}\right|^p.
			%	\end{equation}
		Moreover, observe that
		\begin{align}\label{eq: decomposition}
			-\gamma \sigma_{p,k} \partial_x u_{p,k} & =-\frac{\gamma}{\mu^{\frac{1}{p-1}}} \sigma_{p,k} H^{-1}\left(\mu\left|\partial_x u_{p,k}\right|^{p-2} \partial_x u_{p,k}\right) \nonumber\\
			& =-\frac{\gamma}{\mu^{\frac{1}{p-1}}}\left[H^{-1}\left(\sigma_{p,k}+a \rho_{p,k}^\gamma\right)-H^{-1}\left(\sigma_{p,k}\right)\right] \sigma_{p,k}-\frac{\gamma}{\mu^{\frac{1}{p-1}}} H^{-1}\left(\sigma_{p,k}\right) \sigma_{p,k} \nonumber\\
			& =-\frac{\gamma}{\mu^{\frac{1}{p-1}}}\left[H^{-1}\left(\sigma_{p,k}+a \rho_{p,k}^\gamma\right)-H^{-1}\left(\sigma_{p,k}\right)\right] \sigma_{p,k}-\frac{\gamma}{\mu^{\frac{1}{p-1}}}\left|\sigma_{p,k}\right|^{p^{\prime}}.
		\end{align}
		For the first term, since $H^{-1}$ is smooth and increasing, there exists $\xi\in \left(\sigma_{p,k},\sigma_{p,k}+a\rho_{p,k}^\gamma\right)$ such that
		\begin{equation*}
			\left[H^{-1}\left(\sigma_{p,k}+a \rho_{p,k}^\gamma\right)-H^{-1}\left(\sigma_{p,k}\right)\right]= (H^{-1})^{\prime}(\xi)\cdot a \rho_{p,k}^\gamma\geq 0.
		\end{equation*}
		Let $M(t)=\sup_{x\in\Omega_k}\sigma_{p,k}(t,x)$. Since $\sigma_{p,k}$ is continuous and $\Omega_k$ is compact, for each $t$ there exists $x_1(t)\in\overline{\Omega}_k$ such that $M(t)=\sigma_{p,k}(t,x_1(t))$. If $x_1(t)$ lies in the interior for $t>0$, then at $(t,x_1(t))$ we have $\partial_x\sigma_{p,k}=0$ and $\partial_x^2\sigma_{p,k}\leq 0$. Substituting into the evolution equation \eqref{eq: dt sigma1} and using the decomposition \eqref{eq: decomposition}, we obtain
		\begin{align*}
			\frac{\mathrm{d}}{\mathrm{d}t}\sigma_{p,k}(t,x_1)=&\mu H^{\prime}\left(\partial_x u_{p,k}\right)\frac{1}{\rho_{p,k}} \partial_x^2\sigma_{p,k}(t,x_1) -\frac{a\gamma}{\mu^{\frac{1}{p-1}}}(H^{-1})^{\prime}(\xi) \rho_{p,k}^\gamma(t,x_1)\sigma_{p,k}(t,x_1)\\
			&-\frac{\gamma}{\mu^{\frac{1}{p-1}}}\left|\sigma_{p,k}\right|^{p^{\prime}}(t,x_1)+\mu\left(1+\gamma-p\right)\left|\partial_x u_{p,k}\right|^p(t,x_1)\\
			\leq& -\frac{a\gamma}{\mu^{\frac{1}{p-1}}} (H^{-1})^{\prime}(\xi) \rho_{p,k}^\gamma(t,x_1)\sigma_{p,k}(t,x_1)+\mu\left(1+\gamma-p\right)\left|\partial_x u_{p,k}\right|^p(t,x_1).
		\end{align*} 
		For $p$ sufficiently large, i.e., $p\geq 1+\gamma$, the second term of the right-hand side is non-positive. 
		Hence
		\begin{equation*}
			\frac{\mathrm{d}}{\mathrm{d}t}\sigma_{p,k}(t,x_1) \leq -f(t)\sigma_{p,k}(t,x_1),\qquad f(t)=\frac{a\gamma}{\mu^{\frac{1}{p-1}}} (H^{-1})^{\prime}(\xi) \rho_{p,k}^\gamma(t,x_1)\geq 0.
		\end{equation*}
		By Gr\"{o}nwall's inequality, 
		\begin{equation*}
			\sigma_{p,k}(t,x_1)\leq \sigma_{p,k}(0,x_1)\exp\left\{-\int_0^t f(s)\,\dd s\right\}\leq \sigma_{p,k}(0,x_1)\leq \mu.
		\end{equation*}
		Thus $\sigma_{p,k}(t,x)\leq\mu$ for all interior points.
		
		\noindent\textbf{Lower bound for the density.} With the pointwise bound on $\partial_x u_{p,k}$ at hand, we  derive a lower bound for the density using the Lagrangian approach in \cite{Bresch2026}. Define the flow map $X_t(x)$ by
		\begin{equation*}
			\dot{X}_{t}(x) = u_{p,k}(t,X_{t}(x)).
		\end{equation*}
		Along the flow one may estimate
		\begin{align*}
			\partial_{t}\rho_{p,k}(t,X_{t}(x)) &= -\rho_{p,k}(t,X_{t}(x))\,\partial_{x}u_{p,k}(t,X_{t}(x)) \\
			&\geq -\rho_{p,k}(t,X_{t}(x))\left(\frac{a}{\mu}\rho_{p,k}^{\gamma}(t,X_{t}(x))+1\right)^{\frac{1}{p-1}} \\
			&\geq -\left(\frac{a}{\mu}\right)^{\frac{1}{p-1}}\rho_{p,k}^{1+\frac{\gamma}{p-1}}(t,X_{t}(x))-\rho_{p,k}(t,X_{t}(x)),
		\end{align*}
		where in the last line we used the inequality $(x+y)^{\alpha}\leq (x^{\alpha}+y^{\alpha})$ for $\alpha<1$. It follows that
		\begin{equation*}
			\partial_{t}\rho_{p,k}^{\frac{-\gamma}{p-1}}(t,X_{t}(x))
			\leq \frac{\gamma}{p-1}\left(\frac{a}{\mu}\right)^{\frac{1}{p-1}} + \frac{\gamma}{p-1}\rho_{p,k}^{\frac{-\gamma}{p-1}}(t,X_{t}(x)).
		\end{equation*}

        On the other hand, define $Y_{p,k}(s) = \rho_{p,k}^{\frac{-\gamma}{p-1}}(s,X_{s}(x))$. It holds that
		\begin{equation*}
			\partial_{s}Y_{p,k}(s)-\frac{\gamma}{p-1}Y_{p,k}(s)\leq \frac{\gamma}{p-1}\left(\frac{a}{\mu}\right)^{\frac{1}{p-1}},
		\end{equation*}
		which is equivalent to
		\begin{equation*}
			\partial_{s}\left(Y_{p,k}(s)\exp\left(-\frac{\gamma s}{p-1}\right)\right) \leq \frac{\gamma}{p-1}\left(\frac{a}{\mu}\right)^{1/(p-1)}\exp\left\{-\frac{\gamma s}{p-1}\right\}.
		\end{equation*}
		Integrating from $0$ to $t$ leads to
		\begin{equation*}
			Y_{p,k}(t)\exp\left(-\frac{\gamma t}{p-1}\right)-Y_{p,k}(0)\leq \left(\frac{a}{\mu}\right)^{1/(p-1)}\left[1 - \exp\left(-\frac{\gamma t}{p-1}\right)\right].
		\end{equation*}
		Hence, it follows that
		\begin{equation*}
			\rho_{p,k}^{\frac{\gamma}{p-1}}(t,X_{t}(x))\leq \left(\frac{a}{\mu}\right)^{\frac{1}{p-1}}\left(\exp\left(\frac{\gamma t}{p-1}\right)-1\right) + \rho_{p,k,0}^{-\frac{\gamma}{p-1}}(x)\exp\left(\frac{\gamma t}{p-1}\right),
		\end{equation*}
		and consequently
		\begin{equation*}
			\rho_{p,k}(t,X_{t}(x))\geq \frac{1}{\left(\rho_{0}^{-\frac{\gamma}{p-1}}e^{\frac{\gamma t}{p-1}} + \left(\frac{a}{\mu}\right)^{\frac{1}{p-1}}\left(e^{\frac{\gamma t}{p-1}} - 1\right)\right)^{\frac{p-1}{\gamma}}}
			= \frac{\rho_{0}e^{-t}}{\left(1 + \left(\frac{a}{\mu}\rho_{0}^{\gamma}\right)^{\frac{1}{p-1}}\left(1 - e^{-\frac{\gamma t}{p-1}}\right)\right)^{\frac{p-1}{\gamma}}}.
		\end{equation*}
		Using the inequality $1-e^{-x}\leq x$ for $x\geq 0$, we obtain
		\begin{equation*}
			\rho_{p,k}(t,X_{t}(x))\geq \frac{\rho_{0}e^{-t}}{\left(1 + \left(\frac{a}{\mu}\rho_{0}^{\gamma}\right)^{\frac{1}{p-1}}\frac{\gamma t}{p-1}\right)^{\frac{p-1}{\gamma}}}.
		\end{equation*}
		Note that
		\begin{align*}
			\left(1 + \left(\frac{a}{\mu}\rho_{0}^{\gamma}\right)^{\frac{1}{p-1}}\frac{\gamma}{p-1}t\right)^{\frac{p-1}{\gamma}}
			&\leq \max\left\{1,\left(\frac{a}{\mu}\|\rho_{0}\|_{L^{\infty}}^{\gamma}\right)^{1/\gamma}\right\}\left(1+\frac{\gamma t}{p-1}\right)^{\frac{p-1}{\gamma}} \\
			&\leq \max\left\{1,\left(\frac{a}{\mu}\right)^{1/\gamma}\|\rho_{0}\|_{L^{\infty}}\right\}e^{t},
		\end{align*}
		where in the second line we used the inequality $(1+x)^{\frac{1}{x}}\leq e$ for all $x>0$. Therefore,
		\begin{equation}\label{eq: low rho}
			\rho_{p,k}(t,X_{t}(x))\geq \frac{\rho_{0}e^{-2t}}{\max\left\{1,\left(\frac{a}{\mu}\right)^{1/\gamma}\|\rho_{0}\|_{L^{\infty}}\right\}}.
		\end{equation}
		Thus we obtain the lower bound.
		
		\noindent\textbf{Upper bound for the density.} The upper bound for the density is obtained by a potential function $\psi_{p,k}$ introduced by Basov and Shelukhin \cite{Basov1999} and used also in \cite[Proposition 2.1]{Bresch2026}. 
		
		For $x\in \Omega_k$, we define the potential $\psi_{p,k}$ as follows:		\begin{align}\label{def: psi}
			\psi_{p,k}(t, x)=&\int_0^t\left(\rho_{p,k} u_{p,k}^2-\mu\left|\partial_x u_{p,k}\right|^{p-2} \partial_x u_{p,k}+a\rho_{p,k}^\gamma\right)(s,x)\,\mathrm{d} s\nonumber\\
			&\qquad -\frac{1}{|\Omega_k|}\int_{\Omega_k}\left(\int_{y}^x\left(\rho_{p,k,0} u_{p,k,0}\right)(z)\,\mathrm{d} z\right)\,\mathrm{d}y.
		\end{align}
		From the momentum equation, we compute
		\begin{equation*}
			\begin{aligned}
				& \partial_t \psi_{p,k}(t, x)=\rho_{p,k} u_{p,k}^2-\mu\left|\partial_x u_{p,k}\right|^{p-2} \partial_x u_{p,k}+a\rho_{p,k}^\gamma, \\
				& \partial_x \psi_{p,k}(t, x)=\int_0^t\left(\rho_{p,k} u_{p,k}^2-\mu\left|\partial_x u_{p,k}\right|^{p-2} \partial_x u_{p,k}+ a\rho_{p,k}^\gamma\right)_x\,\mathrm{d} s-\left(\rho_{p,k,0} u_{p,k, 0}\right)(x)=-\rho_{p,k} u_{p,k}(t, x).
			\end{aligned}
		\end{equation*}
		Combining these two expressions above yields
		\begin{equation*}
			\partial_t \psi_{p,k}(t, x)+u_{p,k}(t, x) \partial_x \psi_{p,k}(t, x)=-\mu\left|\partial_x u_{p,k}\right|^{p-2} \partial_x u_{p,k}+a \rho_{p,k}^\gamma .
		\end{equation*}
		Moreover, 
		\begin{align*}
			\psi_{p,k}(t,x)=\frac{1}{|\Omega_k|}\int_{\Omega_k}\psi_{p,k}(t,y)\,\mathrm{d}y+\frac{1}{|\Omega_k|}\int_{\Omega_k}\left(\int_{y}^x\partial_z \psi_{p,k}(t,z)\,\mathrm{d} z\right)\,\mathrm{d}y,
		\end{align*}
		which gives
		\begin{align*}
			\psi_{p,k}(t,x)-\psi_{p,k}(0,x)=&\frac{1}{|\Omega_k|}\int_{\Omega_k}\int_0^t\left(\rho_{p,k} u_{p,k}^2-\mu\left|\partial_x u_{p,k}\right|^{p-2} \partial_x u_{p,k}+\rho_{p,k}^\gamma\right)(s,y)\,\mathrm{d} s\,\mathrm{d}y\\
			&-\frac{1}{|\Omega_k|}\int_{\Omega_k}\left(\int_{y}^x\rho_{p,k} u_{p,k}(t,z)\,\mathrm{d} z\right)\,\mathrm{d}y\\
			=:&\,\mathcal{I}_1+\mathcal{I}_2.
		\end{align*}
		The energy estimates in Proposition \ref{prop: energy} imply
		\begin{align*}
			|\mathcal{I}_1|\leq& \frac{2T}{|\Omega_k|}\int_{\Omega_k}\frac{1}{2}\rho_{p,k} u_{p,k}^2\,\mathrm{d}y+\frac{1}{|\Omega_k|}\int_0^t\int_{\Omega_k}\mu\left|\partial_x u_{p,k}\right|^{p-1}\,\mathrm{d}y \,\mathrm{d} s\\
			&+\frac{\gamma-1}{a}\frac{T}{|\Omega_k|}\int_{\Omega_k}\frac{a}{\gamma-1}\rho_{p,k}^\gamma \,\mathrm{d}y\\
			\leq& \left(\frac{2T}{|\Omega_k|}+\frac{(\gamma-1)T}{a|\Omega_k|}\right)E_0+\frac{1}{|\Omega_k|} \int_0^t\int_{\Omega_k}\mu\left(\frac{p-1}{p}\left|\partial_x u_{p,k}\right|^{p}+\frac{1}{p}\right)\,\mathrm{d}y \,\mathrm{d} s\\
			\leq&\left(\frac{2T}{|\Omega_k|}+\frac{(\gamma-1)T}{a|\Omega_k|}+\frac{\mu}{|\Omega_k|}\right)E_0+\frac{\mu}{p}t ,
		\end{align*}
		where we used Young's inequality $xy\leq \frac{p-1}{p}x^{\frac{p}{p-1}}+\frac{1}{p}y^p$ for $x,y\geq 0$.
		
		Similarly, we bound $\mathcal{I}_2$ by
		\begin{align*}
			\left|-\int_{y}^{x}\rho_{p,k}u_{p,k}(t,z)\,\mathrm{d}z\right|\leq& \int_{\Omega_k}\left|\rho_{p,k}u_{p,k}(t,z)\right|\,\mathrm{d}z\\ \leq& \left(\int_{\Omega_k}\rho_{p,k}\,\mathrm{d}z\right)^{\frac{1}{2}}\left(\int_{\Omega_k}\rho_{p,k}u_{p,k}^2\,\mathrm{d}z\right)^{\frac{1}{2}} \\
			\leq &\left(\int_{\Omega_k}\rho_{p,k}^{\gamma}\,\mathrm{d}x\right)^{\frac{1}{\gamma}}|\Omega_k|^{1-\frac{1}{\gamma}} \left(2E_{0}\right)^{\frac{1}{2}}\\
			\leq &\left(\frac{\gamma-1}{a}E_0\right)^{\frac{1}{\gamma}}\left(2E_{0}\right)^{\frac{1}{2}}|\Omega_k|^{1-\frac{1}{\gamma}}\\
			\leq &\sqrt{2}\left(\frac{\gamma-1}{a}\right)^{\frac{1}{\gamma}}|\Omega_k|^{1-\frac{1}{\gamma}}E_0^{\frac{1}{\gamma}+\frac{1}{2}}.
		\end{align*}
		Thus
		\begin{equation}\label{es: psi diff}
			\begin{aligned}
				|\psi_{p,k}(t,x)-\psi_{p,k}(0,x)|\leq& \left(\frac{2T}{|\Omega_k|}+\frac{(\gamma-1)T}{a|\Omega_k|}
				+\frac{\mu}{|\Omega_k|}\right)E_0\\
				&\qquad+\sqrt{2}\left(\frac{\gamma-1}{a}\right)^{\frac{1}{\gamma}}|\Omega_k|^{1-\frac{1}{\gamma}}E_0^{\frac{1}{\gamma}+\frac{1}{2}}+\frac{\mu}{p}t\\
				\leq & C(T,a,\mu,\gamma,k,E_0)+\frac{\mu}{2}t.
			\end{aligned}
		\end{equation}

        Then, notice that 		\begin{equation*}
			\begin{aligned}
				\partial_t&\left(\rho_{p,k}^{\mu}\exp\{-\psi_{p,k}\}\right)+u_{p,k}\partial_x\left(\rho_{p,k}^{\mu}\exp\{-\psi_{p,k}\}\right)\\&=\rho_{p,k}^{\mu}\exp\{-\psi_{p,k}\}\left(-\mu\partial_x u_{p,k}+\mu\left|\partial_x u_{p,k}\right|^{p-2}\partial_x u_{p,k}-a\rho_{p,k}^{\gamma}\right).
			\end{aligned}
		\end{equation*}
		Here the exponent $\mu$ in $\rho_{p,k}^{\mu}$ is the viscosity coefficient.
		For $\partial_x u_{p,k}>0$, we have
		\begin{equation*}
			-\mu \partial_x u_{p,k}+\mu\left|\partial_x u_{p,k}\right|^{p-2} \partial_x u_{p,k}-a \rho_{p,k}^\gamma=\sigma_{p,k}-\mu \partial_x u_{p,k} \leq \mu,
		\end{equation*}
		while for $\partial_x u_{p,k}<0$, since the function $z\mapsto z(|z|^{p-2}-1)$ is bounded by 1 for $z<0$, we have
		\begin{equation*}
			-\mu \partial_x u_{p,k}+\mu\left|\partial_x u_{p,k}\right|^{p-2} \partial_x u_{p,k}-\rho_{p,k}^\gamma \leq \mu \partial_x u_{p,k}\left(\left|\partial_x u_{p,k}\right|^{p-2}-1\right) \leq \mu.
		\end{equation*}
		Therefore,		\begin{equation*}
			\partial_t\left(\rho_{p,k}^{\mu}\exp\{-\psi_{p,k}\}\right)+u_{p,k}\partial_x\left(\rho_{p,k}^{\mu}\exp\{-\psi_{p,k}\}\right)\leq \mu\rho_{p,k}^{\mu}\exp\{-\psi_{p,k}\}.
		\end{equation*}
		Applying the maximum principle and Gr\"{o}nwall's lemma, we deduce that
		\begin{equation*}
			\rho_{p,k}(t,x)\leq \rho_{p,k,0}\exp\left[\frac{1}{\mu}\left(\psi_{p,k}(t,x)-\psi_{p,k}(0,x)\right)+t\right].
		\end{equation*}
		Using the difference estimates \eqref{es: psi diff} for $\psi_{p,k}$, we obtain the upper bound on $\Omega_k$
		\begin{equation}\label{up rho_pk}
			\rho_{p,k}\leq \sup\limits_{\Omega_k}\rho_{p,k,0}(x)\exp\left[C(T,a,\mu,\gamma,k,E_0)+\frac{3}{2}t\right].
		\end{equation}
		Thus, the density remains bounded from  above uniformly in $p$ on $\Omega_k$.
	\end{proof}
	
	\begin{proof}[Proof of Proposition \ref{prop: rho dotu}]
		The momentum equation reads
		\begin{equation*}
			\rho_{p,k}\dot{u}_{p,k} - \mu\partial_{x}\left(|\partial_{x}u_{p,k}|^{p-2}\partial_{x}u_{p,k}\right) + a\partial_{x}\rho_{p,k}^{\gamma}=0.
		\end{equation*}
		Multiplying by $\dot{u}_{p,k}$ and integrating over $\Omega_k$, we get
		\begin{align*}
			\int_{\Omega_k}\rho_{p,k}|\dot{u}_{p,k}|^{2}\,\mathrm{d}x&+\mu\frac{\mathrm{d}}{\mathrm{d}t}\int_{\Omega_k}\frac{|\partial_{x}u_{p,k}|^{p}}{p}\,\mathrm{d}x
			+\mu\int_{\Omega_k}|\partial_{x}u_{p,k}|^{p-2}\partial_{x}u_{p,k}\,\partial_{x}(u_{p,k}\partial_{x}u_{p,k})\,\mathrm{d}x\\&- a\int_{\Omega_k}\rho_{p,k}^{\gamma}\partial^{2}_{tx}u_{p,k}\,\mathrm{d}x+ a\int_{\Omega_k}\partial_{x}(\rho_{p,k})^{\gamma}\,u_{p,k}\partial_{x}u_{p,k}\,\mathrm{d}x = 0.
		\end{align*}
		First note that
		\begin{equation*}
			\partial_x\left(|\partial_x u_{p,k}|^p\right)=p|\partial_x u_{p,k}|^{p-2}\partial_x u_{p,k}\partial_x^2 u_{p,k}.
		\end{equation*}
		Using the boundary condition $\left.u\right|_{\partial \Omega_k}=0$ and integration by parts, we get
		\begin{align*}
			\mu\int_{\Omega_k}|\partial_{x}u_{p,k}|^{p-2}\partial_{x}u_{p,k}\,\partial_{x}(u_{p,k}\partial_{x}u_{p,k})\,\mathrm{d}x
			=&\mu\int_{\Omega_k}|\partial_{x}u_{p,k}|^{p}\partial_{x}u_{p,k}\,\mathrm{d}x\\
			&+\mu\int_{\Omega_k}|\partial_{x}u_{p,k}|^{p-2}\partial_{x}u_{p,k}\,\partial_{x}^{2}u_{p,k}\,u_{p,k}\,\mathrm{d}x\\
			=&\mu\left(1-\frac{1}{p}\right)\int_{\Omega_k}|\partial_{x}u_{p,k}|^{p}\partial_{x}u_{p,k}\,\mathrm{d}x\\
			=&\left(1-\frac{1}{p}\right)\int_{\Omega_k}|\partial_{x}u_{p,k}|^{2}\left(\sigma_{p,k}+a\rho_{p,k}^{\gamma}\right)\,\mathrm{d}x.
		\end{align*}
		Second, observe that
		\begin{equation*}
			-\int_{\Omega_k}\rho_{p,k}^{\gamma}\partial^{2}_{tx}u_{p,k}\,\mathrm{d}x
			=-\frac{\mathrm{d}}{\mathrm{d}t}\int_{\Omega_k}\rho_{p,k}^{\gamma}\partial_{x}u_{p,k}\,\mathrm{d}x
			+\int_{\Omega_k}\partial_{t}(\rho_{p,k}^{\gamma})\partial_{x}u_{p,k}\,\mathrm{d}x
		\end{equation*}
		and, by the continuity equation $\eqref{PDE,pk}_1$
		\begin{equation*}
			\partial_{t}(\rho_{p,k}^{\gamma}) + u_{p,k}\partial_{x}(\rho_{p,k}^{\gamma}) + \gamma\rho_{p,k}^{\gamma}\partial_{x}u_{p,k}=0,
		\end{equation*}
		one obtains that	\begin{equation*}
			\int_{\Omega_k}\partial_{t}\rho_{p,k}^{\gamma}\partial_{x}u_{p,k}\,\mathrm{d}x
			+\int_{\Omega_k}u_{p,k}\partial_{x}(\rho_{p,k}^{\gamma})\partial_{x}u_{p,k}\,\mathrm{d}x
			=-\int_{\Omega_k}\gamma\rho_{p,k}^{\gamma}|\partial_{x}u_{p,k}|^{2}\,\mathrm{d}x.
		\end{equation*}
		Hence,
		\begin{equation*}
			-\int_{\Omega_k}\rho_{p,k}^{\gamma}\partial^{2}_{tx}u_{p,k}\,\mathrm{d}x
			+\int_{\Omega_k}\partial_{x}(\rho_{p,k})^{\gamma}u_{p,k}\partial_{x}u_{p,k}\,\mathrm{d}x
			=-\frac{\mathrm{d}}{\mathrm{d}t}\int_{\Omega_k}\rho_{p,k}^{\gamma}\partial_{x}u_{p,k}\,\mathrm{d}x
			-\int_{\Omega_k}\gamma\rho_{p,k}^{\gamma}|\partial_{x}u_{p,k}|^{2}\,\mathrm{d}x.
		\end{equation*}
		Putting together the previous computations, we deduce that
		\begin{align*}
			\int_{\Omega_k}\rho_{p,k}|\dot{u}_{p,k}|^{2}\,\mathrm{d}x
			+&\mu\frac{\mathrm{d}}{\mathrm{d}t}\int_{\Omega_k}\frac{|\partial_{x}u_{p,k}|^{p}}{p}\,\mathrm{d}x
			+a\left(1-\frac{1}{p}\right)\int_{\Omega_k}|\partial_{x}u_{p,k}|^{2}\rho_{p,k}^{\gamma}\,\mathrm{d}x
			\\=& \frac{\mathrm{d}}{\mathrm{d}t}\int_{\Omega_k}a\rho_{p,k}^{\gamma}\partial_{x}u_{p,k}\,\mathrm{d}x
			+a\int_{\Omega_k}\gamma\rho_{p,k}^{\gamma}|\partial_{x}u_{p,k}|^{2}\,\mathrm{d}x\\
			&-\left(1-\frac{1}{p}\right)\int_{\Omega_k}|\partial_{x}u_{p,k}|^{2}\sigma_{p,k}\,\mathrm{d}x.
		\end{align*}
		Thus, integrating in time from $0$ to $t\in(0,T)$, we infer that
		\begin{align*}
			&\int_{0}^{t}\int_{\Omega_k}\rho_{p,k}|\dot{u}_{p}|^{2}\,\mathrm{d}x \,\mathrm{d}s
			+ \mu\int_{\Omega_k}\frac{|\partial_{x}u_{p,k}|^{p}}{p}(t,x)\,\mathrm{d}x
			+a\left(1-\frac{1}{p}\right)\int_{0}^{t}\int_{\Omega_k}|\partial_{x}u_{p,k}|^{2}\rho_{p,k}^{\gamma}\,\mathrm{d}x \,\mathrm{d}s \\
			&= \int_{\Omega_k}a\rho_{p,k}^{\gamma}\partial_{x}u_{p,k}(t,x)\,\mathrm{d}x
			- \int_{\Omega_k}a\rho_{p,k}^{\gamma}\partial_{x}u_{p,k}(0)\,\mathrm{d}x
			+a\int_{0}^{t}\int_{\Omega_k}\gamma\rho_{p,k}^{\gamma}|\partial_{x}u_{p,k}|^{2}\,\mathrm{d}x\,\mathrm{d}s \\
			&\quad-\left(1-\frac{1}{p}\right)\int_{0}^{t}\int_{\Omega_k}|\partial_{x}u_{p,k}|^{2}\sigma_{p,k}\,\mathrm{d}x \,\mathrm{d}s+ \mu\int_{\Omega_k}\frac{|\partial_{x}u_{p,k}(0)|^{p}}{p}\,\mathrm{d}x =: \sum_{i=1}^{5} I_{i}.
		\end{align*}
		Concerning $I_{1}$, for all $t\in(0,T]$, using Young's inequality $xy\leq \frac{1}{p}x^{p}+\frac{p-1}{p}y^\frac{p}{p-1}$ for $x,y\geq 0$ and energy estimates in Proposition \ref{prop: energy}, we have
		\begin{align*}
			I_1\leq& \frac{p-1}{p}\left(\frac{2}{\mu}\right)^{\frac{1}{p-1}}\int_{\Omega_k}\rho_{p,k}^{\gamma\cdot \frac{p}{p-1}}(t,x)\,\mathrm{d}x+ \frac{\mu}{2p}\int_{\Omega_k}\left|\partial_{x}u_{p,k}\right|^{p}(t,x)\,\mathrm{d}x\\
			\leq&\left\|\rho_{p,k}\right\|_{L_x^{\infty}}^{\frac{\gamma}{p-1}}\frac{p-1}{p}\left(\frac{2}{\mu}\right)^{\frac{1}{p-1}}\int_{\Omega_k}\rho_{p,k}^{\gamma}(t,x)\,\mathrm{d}x+ \frac{\mu}{2}\int_{\Omega_k}\frac{|\partial_{x}u_{p,k}|^{p}}{p}(t,x)\,\mathrm{d}x\\
			\leq&C(T,\gamma,\mu,E_0,k)+\frac{\mu}{2}\int_{\Omega_k}\frac{|\partial_{x}u_{p,k}|^{p}}{p}(t,x)\,\mathrm{d}x.
		\end{align*}
		Similarly, for $I_3$, using Young's inequality $xy\leq \frac{2}{p}x^{\frac{p}{2}}+\frac{p-2}{p}y^\frac{p}{p-2}$ for $x,y\geq 0$ and energy estimates in Proposition \ref{prop: energy}, we derive
		\begin{align*}
			I_3
			\leq& \left\|\rho_{p,k}\right\|_{L_x^{\infty}}^{\frac{2\gamma}{p}}\left[\frac{p-2}{p}\int_0^t \int_{\Omega_k}\rho_{p,k}^{\frac{p-2}{p}\cdot\gamma\cdot \frac{p}{p-2}}\,\mathrm{d}x \,\mathrm{d}s+ 2\int_0^t \int_{\Omega_k}\frac{\left|\partial_{x}u_{p,k}\right|^{p}}{p}\,\mathrm{d}x \,\mathrm{d}s\right]\\
			\leq& C(T,\gamma,\mu,E_0,k)+C(T,\gamma,\mu,E_0,k)\int_0^t \int_{\Omega_k}\frac{\left|\partial_{x}u_{p,k}\right|^{p}}{p}\,\mathrm{d}x \,\mathrm{d}s.
		\end{align*}
		For $I_4$, we recall that $\partial_x\sigma_{p,k}=\rho_{p,k}\dot{u}_{p,k}$. Since $\int_{\Omega_k}\partial_x u_{p,k}(t,y)\,\mathrm{d}y=0$, the mean value theorem implies that for each $t$ there exists $x(t)\in\Omega_k$ such that $\partial_x u_{p,k}(t,x(t))=0$. Integrating the momentum equation from $x(t)$ to $x$ gives
		\begin{equation}\label{def sigma1}
			\sigma_{p,k}(t,x) = \int_{x(t)}^{x} (\rho_{p,k}\dot{u}_{p,k})(t,z)\,\mathrm{d}z - a\rho_{p,k}^{\gamma}(t,x(t)).
		\end{equation}
		For $p\geq 4$, using the H\"{o}lder's inequality, the upper bound for density \eqref{es: bound rho_pk} and energy estimates, we obtain 
		\begin{align*}
			I_4
			\leq& \left(1-\frac{1}{p}\right)\int_{0}^{t}\int_{\Omega_k}\left|\int_{x(s)}^{x} (\rho_{p,k}\dot{u}_{p,k})(s,z)\,\mathrm{d}z - a\rho_{p,k}^{\gamma}(s,x(s))\right||\partial_{x}u_{p,k}|^{2}\,\mathrm{d}x \,\mathrm{d}s\\
			\leq& \left(1-\frac{1}{p}\right) \int_{0}^{t}\int_{\Omega_k}\left(\int_{\Omega_k} \left|\rho_{p,k}\dot{u}_{p,k}\right|(s,z)\,\mathrm{d}z+a\left\|\rho_{p,k}\right\|_{L_x^{\infty}}^{\gamma}\right)|\partial_{x}u_{p,k}|^{2}\,\mathrm{d}x \,\mathrm{d}s\\
			\leq& \left(1-\frac{1}{p}\right) \int_{0}^{t}\left(\int_{\Omega_k} \left|\rho_{p,k}\dot{u}_{p,k}\right|(s,z)\,\mathrm{d}z\right)\int_{\Omega_k} |\partial_{x}u_{p,k}|^{2}\,\mathrm{d}x \,\mathrm{d}s\\
			&+a\left(1-\frac{1}{p}\right) \int_{0}^{t}\int_{\Omega_k}\left\|\rho_{p,k}\right\|_{L_x^{\infty}}^{\gamma}|\partial_{x}u_{p,k}|^{2}\,\mathrm{d}x \,\mathrm{d}s\\
			\leq & \left(1-\frac{1}{p}\right)\int_{0}^{t}\left(\int_{\Omega_k}\rho_{p,k}\,\mathrm{d}z\right)^{\frac{1}{2}}\left(\int_{\Omega_k}\rho_{p,k}|\dot{u}_{p,k}|^2\,\mathrm{d}z\right)^{\frac{1}{2}}\int_{\Omega_k} |\partial_{x}u_{p,k}|^{2}\,\mathrm{d}x \,\mathrm{d}s\\
			&+a\left(1-\frac{1}{p}\right)\left\|\rho_{p,k}\right\|_{L_x^{\infty}}^{\gamma}\int_{0}^{t}\int_{\Omega_k}|\partial_{x}u_{p,k}|^{p}\,\mathrm{d}x \,\mathrm{d}s\\
			\leq & \left(1-\frac{1}{p}\right)\int_{0}^{t}\left[\frac{1}{2}\left(\int_{\Omega_k}\rho_{p,k}|\dot{u}_{p,k}|^2\,\mathrm{d}z\right)+\frac{1}{2}\left(\left\|\rho_{p,k}\right\|_{L_x^{\infty}}^{\frac{1}{2}}|\Omega_k|^{\frac{1}{2}}\int_{\Omega_k} |\partial_{x}u_{p,k}|^{2}\,\mathrm{d}x\right)^2\right] \,\mathrm{d}s\\
			&+C(T,\mu,a,\gamma,E_0,k)\\
			\leq & \int_{0}^{t}\left[\frac{1}{2}\left(\int_{\Omega_k}\rho_{p,k}|\dot{u}_{p,k}|^2\,\mathrm{d}z\right)+\frac{1}{2}\left\|\rho_{p,k}\right\|_{L_x^{\infty}}|\Omega_k|\left(\int_{\Omega_k} |\partial_{x}u_{p,k}|^{2}\,\mathrm{d}x\right)^2\right] \,\mathrm{d}s\\
			&+C(T,\mu,a,\gamma,E_0,k)\\
			\leq
			&\frac{1}{2}\int_{0}^{t}\int_{\Omega_k}\rho_{p,k}|\dot{u}_{p,k}|^2\,\mathrm{d}x \,\mathrm{d}s+\frac{1}{2}\left\|\rho_{p,k}\right\|_{L_x^{\infty}}|\Omega_k|^2\int_{0}^{t}\int_{\Omega_k}|\partial_x u_{p,k}|^{4}\,\mathrm{d}x \,\mathrm{d}s+C(T,\mu,a,\gamma,E_0,k)\\
			\leq &\frac{1}{2}\int_{0}^{t}\int_{\Omega_k}\rho_{p,k}|\dot{u}_{p,k}|^2\,\mathrm{d}x \,\mathrm{d}s+\frac{1}{2}\left\|\rho_{p,k}\right\|_{L_x^{\infty}}|\Omega_k|^2\int_{0}^{t}\int_{\Omega_k}(1+|\partial_x u_{p,k}|^{p})\,\mathrm{d}x \,\mathrm{d}s\\
			&+C(T,\mu,a,\gamma,E_0,k)\\
			\leq& \frac{1}{2}\int_{0}^{t}\int_{\Omega_k}\rho_{p,k}|\dot{u}_{p,k}|^2\,\mathrm{d}x \,\mathrm{d}s+ C(T,\mu,a,\gamma,E_0,k),
		\end{align*}
		where we used the inequality $|s|^4\leq 1+|s|^p$ for $p\geq 4$.
		Denoting
		\begin{equation*}
			Y(t) = \frac{1}{2}\int_{0}^{t}\int_{\Omega_k}\rho_{p,k}|\dot{u}_{p,k}|^{2}\,\mathrm{d}x\,\mathrm{d}s + \frac{\mu}{2}\int_{\Omega_k}\frac{|\partial_{x}u_{p,k}|^{p}}{p}(t,x)\,\mathrm{d}x,
		\end{equation*}
		we obtain that
		\begin{equation*}
			Y(t) \leq C(T,\gamma,\mu,a, E_0,k) +C(T,\gamma,\mu,a, E_0,k) \int_{0}^{t}Y(s)\,\mathrm{d}s. 
		\end{equation*}
		From here, we may conclude the proof via Gr\"{o}nwall's inequality.
	\end{proof}
	\begin{proof}[Proof of Proposition \ref{prop: bound tau_pk}]
		Recall the definition of $\sigma_{p,k}$ \eqref{def sigma1} and $x(t)$ in the proof of Proposition \ref{prop: rho dotu} (see the identity~\eqref{def sigma1}). Integrating the momentum equation from $x(t)$ to $x$, we get
		\begin{equation*}
			\mu |\partial_{x}u_{p,k}|^{p-2}\partial_{x}u_{p,k}(t,x) = a\rho_{p,k}^{\gamma}(t,x) + \int_{x(t)}^{x} (\rho_{p,k}\dot{u}_{p,k})(t,z)\,\mathrm{d}z - a\rho_{p,k}^{\gamma}(t,x(t)).
		\end{equation*}
		Then, using the bounds on $\rho_{p,k}$ and $\dot{u}_{p,k}$, we deduce that
		\begin{align*}
			\left|\int_{x(t)}^x(\rho_{p,k}\dot{u}_{p,k})(t,z)\,\mathrm{d}z\right|\leq& \int_{\Omega_k}\left|\rho_{p,k}\dot{u}_{p,k}\right|(t,z)\,\mathrm{d}z\\
			\leq &\left(\int_{\Omega_k}\rho_{p,k}\,\mathrm{d}z\right)^{\frac{1}{2}}\left(\int_{\Omega_k}\rho_{p,k}\left|\dot{u}_{p,k}\right|^2\,\mathrm{d}z\right)^{\frac{1}{2}}\\
			\leq &C(T,\mu,E_0,k).
		\end{align*}
		Therefore, using the upper bound of $\rho_{p,k}$ and Proposition \ref{prop: rho dotu}, we have
		\begin{align*}
			\int_0^T \left\||\partial_{x}u_{p,k}|^{p-2}\partial_{x}u_{p,k}\right\|_{L_x^{\infty}}^2\,\mathrm{d}t\leq &\int_0^T\left(a\left\|\rho_{p,k}^{\gamma}\right\|_{L_x^{\infty}}+\left(\int_{\Omega_k}\rho_{p,k}\,\mathrm{d}z\right)^{\frac{1}{2}}\left(\int_{\Omega_k}\rho_{p,k}\left|\dot{u}_{p,k}\right|^2\,\mathrm{d}z\right)^{\frac{1}{2}}\right)^2\,\mathrm{d}t\\
			\leq &C(T,\mu,E_0,k).
		\end{align*}
		Thus $|\partial_{x}u_{p,k}|^{p-2}\partial_{x}u_{p,k}$ belongs to $L^2(0,T;L^{\infty}(\Omega_k))$ uniformly in $p$, which completes the proof.
	\end{proof}

\bigskip

\noindent
{\bf Acknowledgement}. SL is supported by NSFC Projects 12331008 $\&$ 12411530065, the Young Elite Scientists Sponsorship Program by CAST 2023QNRC001, National Key Research $\&$ Development Programs 2023YFA1010900 and 2024YFA1014900, Shanghai Rising-Star Program 24QA2703600, Shanghai Qi-Guang Scholarship, and Shanghai Frontiers Science Center of Modern Analysis.

\medskip
\noindent
{\bf Statement of competing interests}. We declare that there are no conflicts of interest involved.

\medskip
\noindent
{\bf Data Availability Statement}. We declare that no data are associated with this work.

\medskip
\noindent
{\bf AI Statement}.  The authors thank the ChatGPT 5.5 AI model for editing and  fruitful discussions on the convergence of $\{\rho_p\}$ in $C(0,T; L^p(\R))$. All mathematical derivations, conclusions, and errors remain solely the responsibility of the authors.

\end{document}